\begin{document}
\setlength{\baselineskip}{16pt}

\parindent 0.5cm
\evensidemargin 0cm \oddsidemargin 0cm \topmargin 0cm \textheight 22cm \textwidth 16cm \footskip 2cm \headsep
0cm

\newtheorem{theorem}{Theorem}[section]
\newtheorem{lemma}{Lemma}[section]
\newtheorem{proposition}{Proposition}[section]
\newtheorem{definition}{Definition}[section]
\newtheorem{example}{Example}[section]
\newtheorem{corollary}{Corollary}[section]

\newtheorem{remark}{Remark}[section]
\numberwithin{equation}{section}

\def\p{\partial}
\def\I{\textit}
\def\R{\mathbb R}
\def\C{\mathbb C}
\def\u{\underline}
\def\l{\lambda}
\def\a{\alpha}
\def\O{\Omega}
\def\e{\epsilon}
\def\ls{\lambda^*}
\def\D{\displaystyle}
\def\wyx{ \frac{w(y,t)}{w(x,t)}}
\def\imp{\Rightarrow}
\def\tE{\tilde E}
\def\tX{\tilde X}
\def\tH{\tilde H}
\def\tu{\tilde u}
\def\d{\mathcal D}
\def\aa{\mathcal A}
\def\DH{\mathcal D(\tH)}
\def\bE{\bar E}
\def\bH{\bar H}
\def\M{\mathcal M}
\renewcommand{\labelenumi}{(\arabic{enumi})}

\def\disp{\displaystyle}
\def\undertex#1{$\underline{\hbox{#1}}$}
\def\card{\mathop{\hbox{card}}}
\def\sgn{\mathop{\hbox{sgn}}}
\def\exp{\mathop{\hbox{exp}}}
\def\OFP{(\Omega,{\cal F},\PP)}
\newcommand\JM{Mierczy\'nski}
\newcommand\RR{\ensuremath{\mathbb{R}}}
\newcommand\CC{\ensuremath{\mathbb{C}}}
\newcommand\QQ{\ensuremath{\mathbb{Q}}}
\newcommand\ZZ{\ensuremath{\mathbb{Z}}}
\newcommand\NN{\ensuremath{\mathbb{N}}}
\newcommand\PP{\ensuremath{\mathbb{P}}}
\newcommand\abs[1]{\ensuremath{\lvert#1\rvert}}
\newcommand\normf[1]{\ensuremath{\lVert#1\rVert_{f}}}
\newcommand\normfRb[1]{\ensuremath{\lVert#1\rVert_{f,R_b}}}
\newcommand\normfRbone[1]{\ensuremath{\lVert#1\rVert_{f, R_{b_1}}}}
\newcommand\normfRbtwo[1]{\ensuremath{\lVert#1\rVert_{f,R_{b_2}}}}
\newcommand\normtwo[1]{\ensuremath{\lVert#1\rVert_{2}}}
\newcommand\norminfty[1]{\ensuremath{\lVert#1\rVert_{\infty}}}

\newcommand{\ds}{\displaystyle}

\title{Persistence and Extinction of Nonlocal Dispersal Evolution Equations in  Moving Habitats}

\author{
Patrick De Leenheer\\
Department of Mathematics\\
Oregon State University\\
Corvallis, OR 97331\\
\\
Wenxian Shen\\
Department of Mathematics and Statistics\\
Auburn University, AL 36849\\
\\
 and\\
 \\
  Aijun Zhang\\
  Department of Mathematics\\
Oregon State University\\
Corvallis, OR 97331\\
CORRESPONDENCE AUTHOR EMAIL: zhangai@tigermail.auburn.edu.
 }

\date{}
\maketitle

\noindent {\bf Abstract.}
{This} paper is devoted to the study of persistence and extinction  of a species modeled by  nonlocal dispersal evolution equations in moving habitats with moving speed $c$. It is shown that the species  becomes extinct  if the moving speed $c$ is larger  than the so called spreading speed $c^*$, where $c^*$ is determined by the maximum linearized growth rate function. If the moving speed $c$ is smaller than $c^*$,
it is shown that the persistence of the species  depends on the patch size of the habitat, namely, the species persists if the patch size is greater than some number $L^*$ and in this case, there is a traveling wave solution with speed $c$, and it becomes extinct if the patch size is {smaller} than $L^*$.

\bigskip

\noindent {\bf Key words.} Nonlocal dispersal, moving habitats, traveling wave, persistence, extinction.

\bigskip

\noindent {\bf Mathematics subject classification.} 45C05, 45G10, 45M20, 47G20, 92D25.
\newpage
\section{Introduction}
\setcounter{equation}{0}
Nonlocal dispersal equations have been widely employed as models in the applied fields such as biology, material science, neuroscience, chemistry and ecology \cite{BeCoVo, CoDu, CoDaMa2, ShZh1,ShZh2,ShZh3,Schumacher}.

The current paper is to investigate  the following nonlocal dispersal equation,
\begin{equation}
\label{main-eq0}
\frac{\partial u(t,x)}{\partial t}=\int_{\RR} k(y-x) u(t,y)dy-u(t,x)+f(x-ct,u)u(t,x),\quad x\in\RR.
\end{equation}
We assume that $k(\cdot): \RR \to \RR^+$ is a $C^1$ convolution kernel function {that} satisfies the following:

\medskip
\noindent{\bf (H1)}  {\it $k(\cdot)\in C^1(\RR,[0,\infty))$, $k(z)=k(-z)$,
 $\int_{\RR}k(z)dz=1$, $k(0)>0$, and there exist $\mu,M>0$ such that $k(z)< e^{-\mu|z|}$ and
 $|k'(z)|< e^{-\mu|z|}$ for $|z|>M$.}

 \medskip
  Typical examples satisfying (H1) include the probability density function of the normal distribution $k(x)=\frac{1}{\sqrt{2\pi}}e^{-\frac{x^2}{2}}$ and any $C^1$ {symmetric} convolution kernel functions supported on a bounded interval. Biologically, in \eqref{main-eq0},  the term
 $\int_{\RR} k(y-x)u(t,y)dy-u(t,x)$ characterizes the dispersal of the organisms  that exhibits long range internal interactions. $f(x-c t,u)$ is the reaction term that is related to the growth of species. Noting a speed $c$ in the reaction term $f(x-ct,u)$, biologically we assume the reaction of the populations will change with the moving habitat of speed $c$ due to some external environment change, like climate change. {Without loss of generality, we assume that $c\ge 0$. If $c<0$, biologically it means that the habitat moves in an opposite direction. Mathematically, by changing variables with $\tilde c=-c$ and $\tilde x=-x$, we can obtain an equivalent equation as \eqref{main-eq0} for $\tilde c>0$ and $\tilde x$.}
 { Let $\phi_\pm (x)$ be  $C^1$ functions satisfying that $\phi_\pm(\pm x)=1$  for $x\le 0$, $\phi_\pm(\pm x)=0$ for $x\ge 1$, {$\phi_+(x)=\phi_-(-x)$,} $\phi^{'}_+(x)\le 0$ and $\phi_-^{'}(x)\ge 0$ for $x\in\RR$.}
 We assume that $f$ satisfies

 \medskip

\noindent{\bf (H2)}  {\it There are $r,q,L,L_0>0$ such that $f(x,u)$ is $C^1$ in {$(x,u)$}; $f(x,u)=-q$ for $|x|\ge L+L_0$;
$f(x,u)=r(1-u)$ for $|x|\le L$; { $f(x,u)=-q+(r(1-u)+q)\phi_+\big( \frac{x-L}{L_0}\big)$ for $L<x<L+L_0$; and $f(x,u)=-q+(r(1-u)+q)\phi_-\big(\frac{x+L}{L_0}\big)$ for $-L-L_0<x<-L$}.
}
\medskip

Observe that { $f(x,\cdot)=f(-x,\cdot)$ and}
$$\lim_{L_0\to 0^+} f(x,u)=\begin{cases}-q \quad {\rm for}\quad x > L\cr
r(1-u) \quad {\rm for}\quad -L\le x \le L\cr
-q\quad {\rm for}\quad x<-L.
\end{cases}
$$
Here is an example of $f(x,u)$ which satisfies (H2),
$$
f(x,u)=\begin{cases} -q \quad {\rm for}\quad x\ge L_0+L\cr
-q+\frac{r(1-u)+q}{2}\big(1+\cos \frac{\pi (x-L)}{L_0}\big)\quad {\rm for}\quad L< x<L_0+L\cr
r(1-u) \quad {\rm for}\quad -L\le x \le L\cr
-q+\frac{r(1-u)+q}{2}\big(1+\sin \frac{\pi(2x+2L+L_0)}{2L_0} \big)\quad {\rm for}\quad -L_0-L< x<-L\cr
-q\quad {\rm for}\quad x \le -L_0-L.
\end{cases}
$$

Assumption (H2) indicates that the region $|x|\le L$ is the favorable habitat for the species;  there is a finite mortality rate $q$
 outside of the region $|x|\le L+L_0$;  and  the region $L\le |x|\le L+L_0$ is the transition region.

 {Recently, modeling the effects of global climate change on populations has drawn a lot research attention in the scientific community} \cite{BeDiNaZe, LiBeShFa,ZhKo}. Berestycki {et al.} in \cite{BeDiNaZe} and Li {et al.} in  \cite{LiBeShFa} considered the following reaction-diffusion equation, for $c>0$
 \begin{equation}
\label{main-RD-eq0}
\frac{\partial u(t,x)}{\partial t}=D \frac{\partial^2 u(t,x)}{\partial x^2} +f(x-c t,u)u(t,x),\quad x\in\RR.
\end{equation}
but with different reaction term $f$. In \cite{BeDiNaZe}, {$f$ is assumed to be}
 \begin{equation}
\label{Reaction-Berestycki}
f(x-ct,u)=\begin{cases}
r-u\quad {\rm for}\quad  |x-ct| \leq L \cr
-q  \quad {\rm otherwise}
\end{cases}
\end{equation} 
for some $L>0$, {while in \cite{LiBeShFa}, $f$ is of the form}
 \begin{equation}
\label{Reaction-Li}
f(x-ct,u)=r(x-ct)-u,
\end{equation}
where $r$ is continuous, non-decreasing and bounded with $r(-\infty)<0$ and $r(\infty)>0$.
Other researchers {have considered discrete dynamical systems}, including difference equations \cite{ZhKo} and lattice differential equations  \cite{HuLi}. There is a recent work \cite{LiWangZhao} on \eqref{main-RD-eq0} with the random dispersal {term} being replaced by nonlocal dispersal {term} and with the nonlinear term  \eqref{Reaction-Li}, where the authors studied the spatial dynamics.

Interesting dynamical issues for \eqref{main-eq0} and \eqref{main-RD-eq0} include the persistence and extinction {of the population}, in particular, the dependence of the persistence and extinction on the speed $c$ and the patch size of the moving habitat.
It will be seen that the persistence and existence of traveling wave solutions with speed $c$ are strongly related, and the extinction and nonexistence of traveling wave solutions with speed $c$ are {also} strongly related.
In the current paper, we are interested in the existence and nonexistence of traveling wave solutions of \eqref{main-eq0} with speed $c$, i.e., {positive} solutions of the form
$u(t,x)=v(x-ct)$.

To this end, we consider solutions of \eqref{main-eq0} of the form  $u(t,x)=v(t,x-ct)$ with $v(t,x)$ being differentiable. Then letting $\xi=x-ct$, $v(t,\xi)$ satisfies
\begin{equation}
\label{main-eq}
\frac{\partial v(t,\xi)}{\partial t}=c \frac{\partial v(t,\xi)}{\partial \xi}+ \int_{\RR} \kappa(\eta-\xi) v(t,\eta)d\eta-v(t,\xi)+f(\xi,v) v(t,\xi),\quad \xi \in\RR.
\end{equation}

We remark that {equation \eqref{main-eq} models the nonlocal dispersal, advection and reaction of a single species in a heterogeneous environment.} The number $c$ measures the advection velocity. The term {$c \frac{\partial v(t,\xi)}{\partial \xi}$} describes the drift of the population with the constant speed $c$. {Advective processes occur, for example, in a river or ocean,} where organisms may drift, sink or rise due to the water flows and their own relative weights compared with the surrounding medium (i.e water).

Note that {any nontrivial stationary solution $v(\xi)$ of \eqref{main-eq} satisfies
\begin{equation}
\label{main-eq1}
c v'(\xi)+\int_{\RR} \kappa(\eta-\xi) v(\eta)d\eta-v(\xi)+f(\xi,v) v(\xi)=0,\quad \xi \in\RR,
\end{equation}
and gives rise to a traveling wave solution} $u(t,x)=v(x-ct)$ of \eqref{main-eq0}.

Let
$$
X=C_{\rm unif}^b(\RR)=\{u\in C(\RR)\,|\, u\quad \text{is uniformly continuous and bounded on}\quad \RR\}
$$ with norm $\|u\|=\ds\sup_{x\in\RR}|u(x)|$,
and
$$
X^+=\{u\in X\,|\, u(x)\ge 0\}.
$$
Consider initial value problem for \eqref{main-eq0} and \eqref{main-eq} on $X$. By semigroup theory (See \cite{Henry,Paz}), for any $u_0\in X$,  \eqref{main-eq0} has a unique local classical solution $u(t,x;u_0)$ with $u(0,x;u_0)=u_0(x)$, and
for any $u_0\in X$, and \eqref{main-eq} has a unique local mild solution $v(t,\xi;u_0)$ with $v(0,\xi;u_0)=u_0(\xi)$. Moreover, if $u_0$ is
differentiable and $u_0^{'}(\cdot)\in X$, then $v(t,\xi;u_0)$ is the classical solution of \eqref{main-eq}.

We say that {\it persistence} occurs in \eqref{main-eq0} if { for any $u_0\in X^+$ with $\ds\inf_{x\in\RR}u_0(\xi)>0$},
$$
\liminf_{t\to\infty} \inf_{|\xi|\le K} v(t,\xi;u_0)>0
$$
for any $K>0$. We say that {\it extinction} occurs in \eqref{main-eq0} if for any $u_0\in X^+$,
{$$
\lim_{t\to\infty} \sup_{\xi \in \RR}v(t,\xi;u_0)=0.
$$}

 For $r>0$, let $c^*$ be the spreading speed of
 \begin{equation}
 \label{main-fixed-domain-eq}
 u_t=\int_{\RR}k(y-x)u(t,y)dy-u(t,x)+r(1-u)u, \quad x\in\RR,
 \end{equation}
 that is,
 \begin{equation}
 \label{c-star-eq}
 c^*=\inf_{\mu>0}\frac{\int_{\RR} e^{-\mu z}k(z)dz-1+r}{\mu}
 \end{equation}
 (see Proposition \ref{c-star-prop}).

For given $\lambda>-q$, let
$$
g(\mu;\lambda)=c\mu+\int_{\RR} e^{\mu\eta}k(\eta)d\eta -1-q-\lambda.
$$
Then
$$
g_{\mu \mu}= \int_{\RR}\eta^2e^{\mu\eta}k(\eta)d\eta>0,
$$
and $g(0;\lambda)=-q-\lambda<0$. Note that $g(\mu;\lambda)\to \infty$ as $\mu\to\pm\infty$.  Hence there are $\mu_-(\lambda)<0<\mu_+(\lambda)$ such that
\begin{equation}
\label{mu-plus-minus-eq}
g(\mu_\pm(\lambda);\lambda)=0.
\end{equation}

The main results of the current paper can {then} be stated as follows.

 \medskip

 \noindent $\bullet$ (Tail properties of traveling wave solutions) {\it
Suppose that $v(\xi)=\Phi(\xi)$ is a bounded positive solution of \eqref{main-eq1}. Then  
$$
\limsup_{\xi\to \pm \infty}\frac{\Phi(\xi)}{e^{\mu_\mp\xi}}<\infty,
$$
where $\mu_\mp=\mu_\mp(0)$}
(see Theorem \ref{tail-thm}).

\medskip

 \noindent $\bullet$ (Equivalence of persistence and existence of traveling wave solutions) {\it The following two statements are equivalent:
 persistence occurs in \eqref{main-eq0} and there are traveling wave solutions of \eqref{main-eq0} with speed $c$, which implies that the following two statements are equivalent: extinction occurs in \eqref{main-eq0} and there are no traveling wave solutions of \eqref{main-eq0} with speed $c$ } (see Theorem \ref{equivalence-thm} and Remark \ref{equivalence-rk}).

\medskip

 \noindent $\bullet$   (Existence{, uniqueness, and }{nonexistence}  of traveling wave solution)
 {\it There is $0\le L^*\le \infty$ such that {if $L^*<L<\infty$, then \eqref{main-eq} has a {unique} positive stationary solution
$v(t,\xi)=\Phi(\xi)$. On the other hand, if $0<L<L^*$, then there is no positive stationary solution of \eqref{main-eq}.}
Moreover, if $0\le c<c^*$, then $0\le L^*<\infty$,  and if $c\ge c^*$, then $L^*=\infty$}  (see Theorems \ref{existence-thm} {and \ref{uniqueness-thm}}).

\medskip

\medskip

Observe that
persistence and  extinction  in \eqref{main-eq} are strongly  related to the spectral problem of
the linearization of \eqref{main-eq} at the trivial solution $v\equiv 0$,
\begin{equation}
\label{main-lin-eq1}
\frac{\partial v(t,\xi)}{\partial t}=c\frac{\partial v(t,\xi)}{\partial \xi}+ \int_{\RR} \kappa(\eta-\xi) v(t,\eta)d\eta-v(t,\xi)+f(\xi,0) v(t,\xi),\quad \xi \in\RR.
\end{equation}
Let $\lambda(c,L)$ be the {principal spectral point (see Definition \ref{principal-spectrum-def})} of the spectral problem associated with \eqref{main-lin-eq1}.
We also prove that

\medskip

\noindent $\bullet$ (Principal eigenvalue)  {\it $\lambda(c,L)$ is a principal eigenvalue {(see Definition \ref{principal-spectrum-def})}. Moreover, {if $0<  c< c^*$, then} there is $0\le L^{**}\le \infty$ such that $\lambda(c,L) >0$ for all $L> L^{**}$, and for any $0<L<L^{**}$, $\lambda(c,L) \le 0$. {If $c>c^*$, then} $\lambda(c,L) < 0$ for all $L>0$}
(see Theorem \ref{new-aux-main-thm}).

\medskip

\noindent $\bullet$ (Persistence/extinction)
{\it  If $\lambda(c,L)>0$, then there is a positive stationary solution of \eqref{main-eq}, and for any $K>0$
and $u_0\in X^{++}:=\{u\in X|u>0\}$ satisfying $\ds\liminf_{\xi\to\infty}\frac{u_0(\xi)}{e^{\mu_-(\lambda(c,L))\xi}}>0$ and $\ds\liminf_{\xi\to -\infty}\frac{u_0(\xi)}{e^{\mu_+(\lambda(c,L))\xi}}>0$,
$$
\liminf_{t\to\infty} \inf_{|\xi|\le K} v(t,\xi;u_0)>0,
$$
where $v(t,\xi;u_0)$ is the solution of \eqref{main-eq} with $v(0,\xi;u_0)=u_0(\xi)$, and $\mu_\pm (\lambda(c,L))$ are as in \eqref{mu-plus-minus-eq}.
 If {$\lambda(c,L)\le 0$}, then for any $u_0\in X^+$,
$$
\lim_{t\to\infty} \sup_{x\in\RR} v(t,\xi;u_0)=0.
$$ }
(see Theorem \ref{persistence-extinction}).

 \medskip

Note that, by the equivalence of the occurrence of persistence and the existence of traveling wave solutions, {$L^{**}= L^*$}.


The rest of the paper is organized as follows. In section 2, we present some preliminaries such as comparison principles for nonlocal evolution equations. In section 3, we show the tail behaviors of the traveling waves.   We examine in section 4 the equivalence of the occurrence of persistence and the existence of traveling wave solutions. In section 5, we prove the existence, {uniqueness} and nonexistence of the traveling wave solutions. In sections 6, we investigate the spectral theory of nonlocal operators and discuss {their applications to species persistence and extinction}.

\section{Preliminaries}

In this section, we present some preliminary materials to be used in the following sections.

\subsection{Comparison principle of nonlocal evolution equations}
A continuous function $v(t,\xi)$ on $[0,T)\times \RR$ is called a {\it super-solution} or {\it sub-solution} of
\eqref{main-eq} if $\frac{\p v}{\p t},\frac{\p v}{\p \xi}$ exist and are continuous on $[0,T)\times\RR$ and satisfy
$$
\frac{\p v}{\p t} \geq c \frac{\p v}{\p \xi}+ \int_{\RR} k(\eta-\xi)v(t,\eta)d\eta-v(t,\xi)+f(\xi,v)v(t,\xi),\quad \xi\in\RR
$$
or
$$
\frac{\p v}{\p t}  \leq c \frac{\p v}{\p \xi}+ \int_{\RR} k(\eta-\xi)v(t,\eta)d\eta-v(t,\xi)+f(\xi,v)v(t,\xi),\quad \xi\in\RR
$$
for $t\in [0,T)$,  respectively.
The super-/sub-solutions for the  linear equation \eqref{main-lin-eq1} are defined similarly.
\begin{proposition}[Comparison principle]
\label{comparison-linear-prop} $\quad$
\begin{itemize}
\item[(1)]
If $\underline{v}(t,\xi)$ and $\overline{v}(t,\xi)$ are sub-solution and super-solution of \eqref{main-lin-eq1} on $[0,T)$,
respectively, $\underline{v}(0,\cdot)\leq \overline{v}(0,\cdot)$,  and $\overline{v}(t,\xi)-\underline{v}(t,\xi)\geq -\beta_0$ for $(t,\xi)\in [0,T)\times
\RR$ and some $\beta_0>0$, then
$\underline{v}(t,\cdot)\leq \overline{v}(t,\cdot)\quad {\rm for}\quad t\in [0,T).$

\item[(2)] Suppose that $v_1,v_2\in X$ and $v_1\leq v_2$, $v_1\not =v_2$.
Then  $v(t,\xi;v_1)<v(t,\xi;v_2)$ for all $t>0,\xi \in \RR$, {where $v(t,\xi;v_k)$  is the solution of \eqref{main-eq} with $v(0,\xi;v_k)=v_k$ for $k=1,2$.}
\end{itemize}
\end{proposition}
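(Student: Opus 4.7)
\emph{Part (1).} Set $w:=\overline v-\underline v$. By linearity, $w$ is $C^1$ in $(t,\xi)$, satisfies $w(0,\cdot)\ge 0$, $w\ge-\beta_0$, and
\[
w_t \ge c\, w_\xi + \int_{\RR} k(\eta-\xi)\,w(t,\eta)\,d\eta - w + f(\xi,0)\,w.
\]
My plan is the classical three-step reduction. First, eliminate the advection by the change of variable $W(t,\xi):=w(t,\xi-ct)$: along the characteristics $\xi\mapsto\xi-ct$, the inequality becomes the pure nonlocal one $W_t\ge \mathcal K W-W+f(\xi-ct,0)\,W$, where $\mathcal K\varphi(\xi):=\int_{\RR}k(\eta-\xi)\varphi(\eta)\,d\eta$. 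Second, choose $M>1+\|f(\cdot,0)\|_\infty$ and set $Z:=e^{Mt}W$, so that
\[
Z_t\ge \mathcal K Z + a(t,\xi)\,Z,\qquad a:=M-1+f(\xi-ct,0)\ge 0,
\]
and both terms on the right preserve positivity.

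The heart of the argument is the maximum-principle step for $Z$, and this is where I expect the main obstacle to lie, because $\inf_\xi Z(t,\xi)$ need not be attained on the unbounded line. My fix is the classical perturbation $Z_\epsilon:=Z+\epsilon e^{\beta t}$ with $\epsilon>0$ and $\beta>1+\sup a$. Using $\int k=1$, a direct computation shows that $Z_\epsilon$ is a \emph{strict} super-solution $(Z_\epsilon)_t> \mathcal K Z_\epsilon+a Z_\epsilon$ with $Z_\epsilon(0,\cdot)\ge\epsilon>0$; the a priori bound $w\ge-\beta_0$ ensures $\inf_\xi Z_\epsilon(t,\cdot)\ge-\beta_0 e^{MT}+\epsilon e^{\beta t}$, which keeps the infimum finite. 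If $Z_\epsilon$ ever touched zero, then at the first such time $\tau>0$ continuity gives $\inf_\xi Z_\epsilon(\tau,\cdot)=0$, and along any minimizing sequence $\xi_n$ the strict super-solution inequality together with $\mathcal K Z_\epsilon,\,a Z_\epsilon\ge 0$ and $Z_\epsilon(s,\cdot)\ge 0$ for $s\le\tau$ forces a contradiction by comparing $\partial_t Z_\epsilon(\tau,\xi_n)\le o(1)$ against a strictly positive right-hand side. Sending $\epsilon\to 0$ yields $Z\ge 0$, hence $w\ge 0$.

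\emph{Part (2).} Set $u_i(t,\xi):=v(t,\xi;v_i)$ and $w:=u_2-u_1$. Using $C^1$-smoothness of $u\mapsto f(\xi,u)u$ on the bounded range of the $u_i$, write $f(\xi,u_2)u_2-f(\xi,u_1)u_1=h(t,\xi)\,w$ with $h$ bounded; then $w$ satisfies the linear equation $w_t=c w_\xi+\mathcal K w-w+h(t,\xi)\,w$ with $w(0,\cdot)\ge 0$, $\not\equiv 0$, and Part (1) (with $f(\cdot,0)$ replaced by $h$) yields $w\ge 0$. To promote this to strict positivity I pass to the shifted frame $W(t,\xi):=w(t,\xi-ct)$, which satisfies $W_t=\mathcal K W+(h(t,\xi-ct)-1)\,W$, and apply the Duhamel representation
\[
W(t,\xi)=e^{-t+\int_0^t h(s,\xi-cs)\,ds}W(0,\xi)+\int_0^t e^{-(t-s)+\int_s^t h(\tau,\xi-c\tau)\,d\tau}\,(\mathcal K W(s,\cdot))(\xi)\,ds.
\]
Since $W(0,\cdot)\ge 0$ is not identically zero, it is strictly positive on some open interval $I_0$; hypothesis (H1) gives $\delta_0>0$ with $k>0$ on $(-\delta_0,\delta_0)$, so $\mathcal K$ enlarges any open positivity set of $W(s,\cdot)$ by $\delta_0$ on each side. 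Plugging this back into the Duhamel identity and iterating shows $W(t,\cdot)>0$ on an interval whose length grows by $2\delta_0$ per step, eventually covering $\RR$. Therefore $W(t,\xi)>0$ everywhere on $(0,T)\times\RR$, and translating back, $v(t,\xi;v_2)-v(t,\xi;v_1)=W(t,\xi+ct)>0$ for all $t>0$, $\xi\in\RR$.
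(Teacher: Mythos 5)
Your overall strategy for Part (1) --- pass to the moving frame to kill the advection, tilt by $e^{Mt}$ so the zero-order coefficient is nonnegative, and derive a contradiction at the infimum using the a priori bound $w\ge-\beta_0$ --- is the same as the paper's (which uses $e^{\sigma t}$ with $p(\xi)=f(\xi,0)-1+\sigma>0$ and the substitution $\xi=\hat x-ct$). The gap is the ``first touching time'' step. On the unbounded line the function $m(t):=\inf_{\xi}Z_\epsilon(t,\xi)$ is only upper semicontinuous: it is an infimum of continuous functions, and the hypotheses give no upper bound on $\partial_t Z_\epsilon$ (a super-solution difference only satisfies a one-sided differential inequality) and no equicontinuity uniform in $\xi$. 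Hence the set $\{t:\ m(t)\le 0\}$ need not be closed, and the assertion that ``at the first such time $\tau$ continuity gives $\inf_\xi Z_\epsilon(\tau,\cdot)=0$'' is unjustified: one could a priori have $m(\tau)>0$ while $m(t)<0$ for $t$ arbitrarily close to $\tau$ from above, the negativity escaping to $\xi=\pm\infty$. The $\epsilon e^{\beta t}$ perturbation makes the inequality strict but does not cure this; what it does cure is the case where the infimum at time $\tau$ equals $0$ without being attained --- and even there your contradiction should be run in integrated form ($Z_\epsilon(\tau,\xi_n)\ge Z_\epsilon(\tau-h,\xi_n)+c_0\epsilon h\ge c_0\epsilon h$ for fixed $h$, against $Z_\epsilon(\tau,\xi_n)\to 0$), since $\partial_tZ_\epsilon(\tau,\xi_n)\le o(1)$ does not follow from $Z_\epsilon(\tau,\xi_n)\to 0$ and $Z_\epsilon(s,\xi_n)\ge 0$.

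The paper closes exactly this hole with a short-time contraction instead of a touching time: on $[0,T_0]$ with $T_0(1+p_0)<1$ it takes the global space-time infimum $v_{\inf}$ over $[0,\tilde t]\times\RR$ (finite thanks to the bound $\ge-\beta_0$), picks a minimizing sequence $(t_n,x_n)$, and integrates the inequality from $0$ to $t_n$ to get $v_{\inf}\ge \tilde t(1+p_0)v_{\inf}>v_{\inf}$ whenever $v_{\inf}<0$; it then iterates over successive intervals of length $T_0$. You should graft this step onto your $Z_\epsilon$ (at which point the $\epsilon$-perturbation becomes unnecessary). Part (2) of your proposal is correct and actually more detailed than the paper, which merely cites an earlier reference: the linearization $f(\xi,u_2)u_2-f(\xi,u_1)u_1=h\,w$ with $h$ bounded reduces to Part (1), and the Duhamel representation plus the $\delta_0$-enlargement of the positivity set under $\mathcal K$ (using $k(0)>0$ and continuity of $k$ from (H1)), iterated over intermediate times, correctly yields strict positivity for all $t>0$.
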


\begin{proof}
(1) This follows by modifying the arguments in \cite[Proposition 2.1]{ShZh1}.
Let $v(t,\xi)=e^{\sigma t}(\overline{v}(t,\xi)-\underline{v}(t,\xi))$. Then $v(t,\xi)\geq -e^{\sigma t}\beta_0$, and
\begin{equation}
\label{ineq-aux1}
\frac{\p v}{\p t}\geq c v_{\xi}(t,\xi)+\int_{\RR} k(\eta-\xi)v(t,\eta)d\eta+p(\xi)v(t,\xi),\quad \xi\in \RR,
\end{equation}
for $t \in (0,T)$ and $p(\xi)=f(\xi,0)-1+\sigma$. Choose $\sigma>0$ such that $p(\xi)>0$ for all $(t,\xi)\in [0,T)\times \RR$. We claim that $v(t,\xi)\geq 0$ for $(t,\xi)\in [0,T)\times \RR$.

Let $p_0=\ds\sup_{\xi \in \RR}p(\xi)$. Let $T_0=\min\{T,\frac{1}{p_0+1}\}$. Let $\xi=\hat{x}-c t$ and $\eta=\hat{y}-c t$, then
\begin{equation}
\label{ineq-aux2}
\frac{\p v(t,\hat{x}-c t)}{\p t}\geq \int_{\RR} k(\hat{y}-\hat{x})v(t,\hat{y}-c t)d\hat{y}+p(\hat{x}-c t)v(t,\hat{x}-c t),\quad \hat{x}\in \RR,
\end{equation}
for $t \in (0,T)$.

Assume that there are $\tilde t\in (0,T_0)$ and $\tilde x\in\RR$ such that
$v(\tilde t,\tilde x)<0$. Let
$$
v_{\inf}:=\inf_{(t,\hat{x})\in [0,\tilde t]\times\RR}v(t,\hat{x}-c t)<0.
$$
Observe that there are $t_n\in (0,\tilde t]$ and $x_n\in\RR$ such that
$$
v(t_n,x_n-c t_n)\to v_{\inf}\quad {\rm as}\quad n\to\infty.
$$
By \eqref{ineq-aux2}, we have that
\begin{align*}
v(t_n,x_n-c t_n)-v(0,x_n)&\geq \int_0^{t_n} \big[\int_{\RR} k(\hat{y}-x_n)v(t,\hat{y}-c t)d\hat{y}+p(x_n-c t)v(t,x_n-c t)
\big]dt\\
&\geq \int_0^{t_n}\big[ \int_{\RR}k(\hat{y}-x_n)v_{\inf}d\hat{y}+p_0 v_{\inf}\big]dt\\
&  =t_n(1+p_0)v_{\inf}\\
&\geq \tilde t(1+p_0)v_{\inf}
\end{align*}
for $n=1,2,\cdots$. Note that $v(0,x_n)\geq 0$ for $n=1,2,\cdots$. We then have that
$$
v(t_n,x_n-c t_n)\geq \tilde t(1+p_0) v_{\inf}
$$
for $n=1,2,\cdots$. Letting $n\to\infty$, since $\tilde t(1+p_0)< T_0(1+p_0)\leq \frac{1}{1+p_0}(1+p_0)=1$ and $v_{\inf}<0$, we get
$$
v_{\inf}\geq \tilde t(1+p_0)v_{\inf}>v_{\inf}.
$$
This is a contradiction, {that implies that $v(t,\xi)\geq 0$ for $(t,\xi)\in [0,T_0)\times \RR$. The above procedure can be repeated for $t \in [kT_0,(k+1)T_0) \bigcap [0,T)$ for $k=1,2...$.} Hence $v(t,\xi)\geq 0$ for $(t,\xi)\in [0,T)\times \RR$ and then $\underline{u}(t,\xi)\leq \overline{v}(t,\xi)$
for $(t,\xi)\in[0,T)\times \RR$.

(2) This follows from similar arguments as in \cite[Proposition 2.2]{ShZh1}.
\end{proof}

\subsection{Comparison principle for nonlocal Dirichlet boundary problems}
In this subsection, we consider the following linear nonlocal equations with non-homogeneous Dirichlet boundary conditions:
\begin{equation}
\label{Nolocal-Deq}
\begin{cases}
\frac{\partial\varphi(t,\xi)}{\partial t}=c \frac{\partial \varphi(t,\xi)}{\partial \xi}+\int_{\RR} k(\eta-\xi) \varphi(t,\eta) d\eta-\varphi(t,\xi) +q(\xi)\varphi(t,\xi),\,\, \xi \in (a,b)  \cr
\varphi(t,\xi)=g(\xi),\,\,  \xi \notin (a,b)
\end{cases}
\end{equation} for $a,b \in \RR$,  $b>a$, $q(\xi)$, $g(\xi)\in X$.

\smallskip

Let $\mathcal{L}_D v:=c v'(\xi)+{ \int_{\RR}k(\eta-\xi)v(\eta)d\eta}-v(\xi)+q(\xi)v(\xi)$ for $v,v' \in X$.
 A function $v(t,\xi)$ is called a {\it super-solution} or {\it sub-solution} of
\eqref{Nolocal-Deq} for $t\in [t_0,t_0+T]$ if $v(t,\cdot),v_\xi(t,\cdot) \in X $ and $v$ satisfies
$$
\begin{cases}
v_t(t,\xi)\ge \mathcal{L}_D v ,\quad \xi \in (a,b) \cr
{ v(t,\xi)}\geq g(\xi),\,\,  \xi \notin (a,b)
\end{cases}
$$
or
$$
\begin{cases}
v_t(t,\xi)\le \mathcal{L}_D v ,\quad \xi \in (a,b) \cr
v(t,\xi)\leq g(\xi), \,\, \xi \notin (a,b),
\end{cases}
$$
respectively, for $t\in [t_0,t_0+T]$.

\begin{proposition}
\label{comparison_Nolocal_D-prop}
 Let $\overline{u}$ and $\underline{v}$ be the super-solution and sub-solution of \eqref{Nolocal-Deq} with boundary conditions $\overline{g},\underline{g}$ respectively. Suppose that
  $\bar u(t_0,\xi)\ge \underbar v(t_0,\xi)$ for $\xi\in [a,b]$, $\overline{g}(\xi)\geq \underline{g}(\xi)$ for $\xi \notin (a,b)$ and $q(\xi) < 0$ for $\xi \in (a,b)$. Then  $\overline{u} (t,\xi)\geq \underline{v}(t,\xi)$ for $t\in[t_0,t_0+T]$ and $\xi\in [a,b]$.
\end{proposition}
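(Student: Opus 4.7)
The plan is to run a direct pointwise minimum argument on the compact cylinder $[t_0,t_0+T]\times[a,b]$; the bounded spatial domain lets a minimum be attained, which makes the proof much shorter than the characteristic/integral argument used in Proposition \ref{comparison-linear-prop}(1). Set
$$
w(t,\xi):=\overline{u}(t,\xi)-\underline{v}(t,\xi).
$$
Subtracting the sub-solution inequality from the super-solution inequality and using the linearity of $\mathcal{L}_D$, $w$ satisfies
$$
w_t(t,\xi)\ \geq\ c\,w_\xi(t,\xi)+\int_{\RR}k(\eta-\xi)\,w(t,\eta)\,d\eta-w(t,\xi)+q(\xi)\,w(t,\xi),\qquad \xi\in(a,b),
$$
for $t\in[t_0,t_0+T]$. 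The hypotheses give $w(t_0,\xi)\geq 0$ for $\xi\in[a,b]$ and $w(t,\xi)\geq \overline{g}(\xi)-\underline{g}(\xi)\geq 0$ for every $\xi\in\RR\setminus(a,b)$; in particular $w(t,a)\geq 0$ and $w(t,b)\geq 0$ since $a,b\notin(a,b)$.

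Since $\overline{u}(t,\cdot),\underline{v}(t,\cdot)\in X$, $w$ is continuous on $[t_0,t_0+T]\times[a,b]$, hence $w_{\min}:=\min_{[t_0,t_0+T]\times[a,b]}w$ is attained at some $(t^*,\xi^*)$. I argue by contradiction: suppose $w_{\min}<0$. The initial-data hypothesis excludes $t^*=t_0$, and the boundary-value hypothesis excludes $\xi^*\in\{a,b\}$, so $t^*\in(t_0,t_0+T]$ and $\xi^*\in(a,b)$. At this interior-in-$\xi$ minimizer, $w_\xi(t^*,\xi^*)=0$, and minimality in $t$ forces $w_t(t^*,\xi^*)\leq 0$ (taking a left derivative if $t^*=t_0+T$).

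Because $w(t^*,\eta)\geq w_{\min}$ for every $\eta\in\RR$ (it is $\geq w_{\min}$ on $[a,b]$ by definition, and $\geq 0>w_{\min}$ outside $(a,b)$), the unit mass of the kernel yields
$$
\int_{\RR}k(\eta-\xi^*)\,w(t^*,\eta)\,d\eta\ \geq\ w_{\min}\int_{\RR}k(\eta-\xi^*)\,d\eta\ =\ w_{\min}.
$$
Plugging this, together with $w_\xi(t^*,\xi^*)=0$ and $w(t^*,\xi^*)=w_{\min}$, into the differential inequality gives
$$
0\ \geq\ w_t(t^*,\xi^*)\ \geq\ w_{\min}-w_{\min}+q(\xi^*)\,w_{\min}\ =\ q(\xi^*)\,w_{\min}\ >\ 0,
$$
the strict positivity coming from $q(\xi^*)<0$ and $w_{\min}<0$ having the same sign. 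This contradiction forces $w_{\min}\geq 0$, which is the desired conclusion. The only delicate bookkeeping I foresee is justifying $w_t(t^*,\xi^*)\leq 0$ at a time endpoint and checking that the regularity baked into the super/sub-solution definition ($v_t,v_\xi$ continuous on $(a,b)$) is enough to evaluate the inequality pointwise at the minimizer; the structural ingredient that actually closes the argument is the strict negativity $q<0$ on $(a,b)$, without which one would instead have to pass to $\tilde w(t,\xi):=e^{-\sigma t}w(t,\xi)$ for $\sigma$ larger than $\sup_{(a,b)}q$ and rerun the same minimum analysis with the uniformly negative effective coefficient $q-\sigma$.
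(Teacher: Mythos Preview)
Your argument is correct and is essentially the same as the paper's: define $w=\overline{u}-\underline{v}$, assume a negative minimum is attained at some $(t^*,\xi^*)$ with $t^*>t_0$ and $\xi^*\in(a,b)$, use $w_t\le 0$, $w_\xi=0$ there, and exploit the nonlocal term together with $q(\xi^*)<0$ to produce the contradiction $0\ge w_t(t^*,\xi^*)>0$. The only cosmetic difference is that the paper splits the convolution into the pieces over $(a,b)$ and $\RR\setminus(a,b)$, whereas you use the global bound $w(t^*,\eta)\ge w_{\min}$ and the unit mass of $k$; these are the same computation.
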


\begin{proof}
Let $w=\overline{u}-\underline{v}$. Then
$$\mathcal{L}_D w=\mathcal{L}_D \overline{u}-\mathcal{L}_D \underline{v} \leq w_t.$$
We claim that $w \geq 0$. Suppose not, then there are $t^0\in[t_0,t_0+T]$,  $\xi_0 \in [a,b]$ such that $w(t^0,\xi_0)=\ds\min_{t\in[t_0,t_0+T],\xi \in [a,b]} \{w(t,\xi)\}<0.$ By $\overline{g}(\xi)\geq \underline{g}(\xi)$ for $\xi \notin (a,b)$, $w(t_0,\xi)\geq 0$ for $\xi\in [a,b]$, we have that $t^0\in (t_0,t_0+T]$ and $\xi_0 \in (a,b)$,  and then $w_t(t^0,\xi_0)\le 0$ and $w_\xi(t^0,\xi_0)=0$. Therefore, {since} $w(t^0,\eta)-w(t^0,\xi_0)\geq 0$ for $\eta \in (a,b)$, $w(t^0,\xi_0)<0$ and $q(\xi_0)< 0$, there holds 
$$
\int_{a}^{b}k(\xi_0-\eta)[w(t^0,\eta)-w(t^0,\xi_0)]d\eta{ +}\int_{\RR \setminus (a,b)}k(\xi_0-\eta)d\eta w(t^0,\xi_0)+q(\xi_0)w(t^0,\xi_0){>} 0.$$
  {Thus we have that $\mathcal{L}_D w(t^0,\xi_0)>0$, which is a contradiction.}
\end{proof}

\subsection{Convergence on compact sets}

{ In this subsection, we explore the convergence property of solutions of \eqref{main-lin-eq1} and \eqref{main-eq} in compact open topology.
Note that  $f(\xi,v)$ depends on $r$, $q$, $L$, and $L_0$. View $L$ as a parameter and  write $f(\xi,v)$ as $f(\xi,v;L)$. Let $f(\xi,v;\infty)=r(1-v)$. For fixed $r$, $q$, and $L_0$, to indicate the dependence of solutions of \eqref{main-lin-eq1}  and \eqref{main-eq} on $L$,
we denote the solution of \eqref{main-lin-eq1} (or \eqref{main-eq}) by $\tilde{v}(t,\xi;u_0,L)$ (or $v(t,\xi;u_0,L)$).

\begin{proposition}[Convergence on compact subsets]
\label{basic-convergence}
Suppose that $u_{0n},u_0\in X^+$ ($n=1,2,\cdots$) and $\{\|u_{0n}\|\}$ is bounded.
\begin{itemize}
\item[(1)] If as $n\to\infty$, $u_{0n}(\xi)\to u_0(\xi)$ uniformly in $\xi$ on bounded sets and $L_n\to \infty$,
 then for each $t>0$, $\tilde{v}(t,\xi;u_{0n},L_n)\to \tilde{v}_\infty(t,\xi;u_0)$ as $n\to\infty$ uniformly in $\xi$ on bounded sets,
 where {$\tilde{v}(t,\xi;u_{0n},L_n)$ is the solution of \eqref{main-lin-eq1} with $\tilde{v}(0,\xi;u_{0n},L_n)=u_{0n}$ and} $\tilde{v}_\infty(t,\xi;u_0)$ is the solution of
 \begin{equation}
\label{fixed-domain-eq1}
v_t=cv_\xi+\int_{\RR}\kappa(\eta-\xi)v(t,\eta)d\eta-v(t,\xi)+r v(t,\xi),\quad \xi\in\RR
\end{equation}
with $\tilde{v}_\infty(0,\xi;u_0)=u_0(\xi)$ for $\xi\in\RR$.
\item[(2)] Fix $L$. If $u_{0n}(\xi)\to u_0(\xi)$ as $n\to\infty$ uniformly in $\xi$ on bounded sets,
 then for each $t>0$, $v(t,\xi;u_{0n})\to v(t,\xi;u_0)$ as $n\to\infty$ uniformly in $\xi$ on bounded sets, where $v(t,\xi;u_{0n})$ and $v(t,\xi;u_0)$ are the solutions of \eqref{main-eq} with $v(0,\xi;u_{0n})=u_{0n}$ and $v(0,\xi;u_0)=u_0$ respectively.
 \end{itemize}
 \end{proposition}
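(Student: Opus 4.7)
The plan is to convert both equations into integral equations along the characteristics $s\mapsto \xi+c(t-s)$ of the transport operator $c\partial_\xi$, and then estimate the difference of the solutions in the exponentially weighted sup-norm $\|w\|_\alpha := \sup_{\xi\in\RR} e^{-\alpha|\xi|}|w(\xi)|$ for some fixed $\alpha \in (0,\mu)$, where $\mu>0$ is the exponential tail rate from (H1). The point of this norm is threefold: by (H1) the convolution operator $w\mapsto \int_\RR k(\eta-\cdot)w(\eta)\,d\eta$ is bounded on this weighted space with norm $C_\alpha := \int_\RR k(y)e^{\alpha|y|}\,dy < \infty$; transport along characteristics contributes at most a factor $e^{\alpha cT}$ over the time window $[0,T]$; and, crucially, any uniformly bounded family converging to zero on compact sets automatically converges to zero in $\|\cdot\|_\alpha$, as one sees by splitting $\RR$ into a large ball (where smallness comes from compact convergence) and its complement (where smallness comes from the weight).

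For part (1), I first establish a uniform bound $\sup_n \sup_{t\in[0,T]}\|\tilde v(t,\cdot;u_{0n},L_n)\|_\infty \le C(T)$ by comparison with the linear equation $w_t = cw_\xi + \int k(\eta-\xi)w(\eta)d\eta - w + rw$, which dominates since $f(\cdot,0;L_n)\le r$ pointwise. Subtracting the characteristic integral representations of $\tilde v(t,\cdot;u_{0n},L_n)$ and $\tilde v_\infty(t,\cdot;u_0)$ yields an integral identity for $D_n := \tilde v(t,\cdot;u_{0n},L_n) - \tilde v_\infty(t,\cdot;u_0)$ with initial datum $u_{0n}-u_0$ and inhomogeneous term $(f(\cdot,0;L_n)-r)\tilde v_\infty$. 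Since $f(\xi,0;L_n)\equiv r$ for $|\xi|\le L_n$ and $|f-r|\le r+q$ otherwise, this inhomogeneity has weighted norm at most $(r+q)C(T)e^{-\alpha L_n}\to 0$, while $\|u_{0n}-u_0\|_\alpha\to 0$ by the splitting observation. A Gronwall estimate in the weighted norm then gives $\|D_n(t)\|_\alpha\to 0$, and since $|D_n(t,\xi)|\le e^{\alpha R}\|D_n(t)\|_\alpha$ for $|\xi|\le R$, convergence is uniform on bounded sets in $\xi$.

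Part (2) follows the same template: $L$ is fixed, only the initial data vary, and the nonlinearity $F(\xi,v;L):=f(\xi,v;L)v$ is $C^1$ in $v$ with $F_v$ bounded on the a priori range $[0,C(T)]$. A mean-value step writes $F(\xi,v_n)-F(\xi,v)=F_v(\xi,\theta_n)D_n$ with a uniformly bounded coefficient and no forcing term, so the same weighted Gronwall inequality closes the argument.

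The main obstacle is the tension between the nonlocal dispersal, which couples values at all $\xi\in\RR$ instantaneously, and the fact that the hypothesis only provides compact-uniform convergence of the data. A purely local argument cannot close because the convolution always draws in information from the tails, and unweighted global Gronwall cannot close because the data difference need not be small in $L^\infty(\RR)$. The exponential weight, together with the tail decay of $k$ from (H1), reconciles the two: it is strong enough to turn compact-uniform data convergence into weighted norm convergence, yet weak enough that convolution by $k$ remains bounded.
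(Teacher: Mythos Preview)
Your argument is correct and follows essentially the same route as the paper: both work in the exponentially weighted sup-norm $\|w\|_\alpha=\sup_\xi e^{-\alpha|\xi|}|w(\xi)|$, write the difference $D_n$ as a variation-of-constants (Duhamel) formula with forcing $(f(\cdot,0;L_n)-r)\tilde v_\infty$, and close with Gronwall in that norm after observing that compact-uniform convergence plus uniform boundedness implies convergence in $\|\cdot\|_\alpha$. The only cosmetic difference is that the paper packages the transport part via $C_0$-semigroup theory on $X(\rho)$ (Lumer--Phillips for $c\partial_\xi$, then bounded perturbation by the convolution), whereas you integrate directly along the characteristics $s\mapsto \xi+c(t-s)$; your $e^{\alpha cT}$ factor and convolution bound $C_\alpha=\int k(y)e^{\alpha|y|}dy$ are exactly what the semigroup estimate encodes.
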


 \begin{proof}
 It can be proved by the similar arguments as those in \cite[Proposition 3.3]{KoSh}. For completeness,
 we provide a proof in the following.
\smallskip

(1) Let $v^n(t,\xi)=\tilde{v}(t,\xi;u_{0n},L_n)-\tilde{v}_\infty(t,\xi;u_0)$.
Then $v^n(t,\xi)$ satisfies
\begin{equation*}
v^n_t(t,\xi)=cv^n_\xi(t,\xi)+\int_{\RR}\kappa(\eta-\xi)v^n(t,\eta)d\eta-v^n(t,\xi)+a_n(\xi)v^n(t,\xi)+b_n(t,\xi),
\end{equation*}
where $a_n(\xi)=f(\xi,0;L_n)$
and $b_n(t,\xi)= \tilde{v}_\infty(t,\xi;u_0)\big(a_n(\xi)-r\big).$

Note that $\{a_n(\xi)\}$ is uniformly bounded and continuous  in $\xi$ with $|a_n(\xi)| \le \max\{r,q\}$.
By (H2),
$b_n(t,\xi)\to 0$ as $n\to\infty$ uniformly in $(t,\xi)$ on bounded sets of $[0,\infty)\times\RR$.

Take a $\rho>0$. Let
$$
X(\rho)=\{u\in C(\RR,\RR)\,|\, u(\cdot) e^{-\rho \|\cdot\|}\in X\}
$$
with norm $\|u\|_\rho=\|u(\cdot)e^{-\rho \|\cdot\|}\|$.
Let
$$
\mathcal{A}u=cu'+\int_{\RR} k(\eta-\xi)u(\eta)d\eta-u(\xi)
$$
for $u\in X(\rho)$ with $u'(\cdot)\in X(\rho)$.
{ Let $\mathcal{D}:X^1(\rho)\to X(\rho)$ be defined by $\mathcal{D} u= u'$ for $u \in X^1(\rho):=\{u \in X(\rho) | u'(\cdot)\in X(\rho)\}$. Since
the resolvent, denoted by $R(\lambda,\mathcal{D})$, can be computed explicitly as
$$
R(\lambda,\mathcal{D}) u=\int_{-\infty}^{\xi}e^{-\lambda(\xi-s)}u(s)ds, \xi\in \RR.
$$
Moreover, $\|R(\lambda,\mathcal{D})\|_{X(\rho)} \leq \frac{1}{\lambda+\rho}$ for all $\lambda>0$. Thus, $\mathcal{D}$ is dissipative and $\lambda I- \mathcal{D}$ is surjective for  $\lambda>0$,where $I$ denotes the identity operator.
Hence, by Lumer–Phillips theorem (See Section 1.4 of \cite{Paz})   $\mathcal{D}$ generates a $C_0$-semigroup on $X(\rho)$. Therefore, by perturbations of bounded linear operators (Theorem 1.1 in Section 3.1 of \cite{Paz}}), $\mathcal{A}$ generates a $C_0$-semigroup on $X(\rho)$ denoted by
$e^{\mathcal{A}t}$,
and there are $M>0$ and $\omega>0$ such that
$$
\|e^{\mathcal{A}t}\|_{X(\rho)}\leq M e^{\omega t}\quad \forall t\geq 0.
$$
Hence
\begin{align*}
v^n(t,\cdot)=e^{\mathcal{A}t}v^n(0,\cdot)+\int_0^t e^{\mathcal{A}(t-\tau)}a_n(\cdot)v^n(\tau,\cdot)
d\tau+\int_0^t e^{\mathcal{A}(t-\tau)}b_n(\tau,\cdot)d\tau
\end{align*}
and
then
\begin{align*}
\|v^n(t,\cdot)\|_{X(\rho)}&\leq M e^{\omega t}\|v^n(0,\cdot)\|_{X(\rho)}+
M\sup_{\xi\in\RR} |a_n(\xi)|\int_0^ t e^{\omega(t-\tau)}\|v^n(\tau,\cdot)\|_{X(\rho)}d\tau\\
&\quad +M\int_0^ t e^{\omega(t-\tau)}\|b_n(\tau,\cdot)\|_{X(\rho)}d\tau\\
&\leq M e^{\omega t}\|v^n(0,\cdot)\|_{X(\rho)}+
M\sup_{\xi\in\RR} |a_n(\xi)|\int_0^ t e^{\omega(t-\tau)}\|v^n(\tau,\cdot)\|_{X(\rho)}d\tau\\
&\quad + \frac{M}{\omega}\sup_{\tau\in
[0,t]}\|b_n(\tau,\cdot)\|_{X(\rho)} e^{\omega t}.
\end{align*}
By Gronwall's inequality,
$$
\|v^n(t,\cdot)\|_{X(\rho)}\leq e^{\big(\omega+M\ds\sup_{\xi\in\RR} |a_n(\xi)|\big)t}\Big(M\|v^n(0,\cdot)\|_{X(\rho)}
+ \frac{M}{\omega}\sup_{\tau\in [0,t]}\|b_n(\tau,\cdot)\|_{X(\rho)}\Big).
$$
Note that $\|v^n(0,\cdot)\|_{X(\rho)}\to 0$ and $\ds\sup_{\tau\in
[0,t]}\|b_n(\tau,\cdot)\|_{X(\rho)}\to 0$ as $n\to\infty$. It then
follows that
$$
\|v^n(t,\cdot)\|_{X(\rho)}\to 0\quad {\rm as}\quad n\to\infty
$$ and then
$$
\tilde{v}(t,\xi;u_{0n},L_n)\to \tilde{v}_\infty(t,\xi;u_0)\quad {\rm as}\quad
n\to\infty
$$
uniformly in $\xi$ on bounded sets.

\smallskip

(2)  Let $v^n(t,\xi)=v(t,\xi;u_{0n})-v(t,\xi;u_0)$.
Then $v^n(t,\xi)$ satisfies
\begin{equation*}
v^n_t(t,\xi)=cv^n_\xi(t,\xi)+\int_{\RR}\kappa(\eta-\xi)v^n(t,\eta)d\eta-v^n(t,\xi)+a_n(t,\xi)v^n(t,\xi)+b_n(t,\xi),
\end{equation*}
where
$$
a_n(t,\xi)=f(\xi,v(t,\xi;u_{0n}))+v(t,\xi;u_0)\cdot \int_0^1 \p_u f(\xi,s v(t,\xi;u_{0n})+
(1-s)v(t,\xi;u_0))ds
$$
and
$$
b_n(t,\xi)= v(t,\xi;u_0)
\cdot \big(f(\xi, v_\infty(t,\xi;u_0))-f(\xi,
 v(t,\xi;u_0))\big).
$$
By the boundedness of $\{\|u_{0n}\|\}$,  $\{a_n(t,\xi)\}$ is uniformly bounded and continuous  in $t$ and $\xi$.
By (H2),
$b_n(t,\xi)\to 0$ as $n\to\infty$ uniformly in $(t,\xi)$ on bounded sets of $[0,\infty)\times\RR$.
The rest follows from a simple modification of the proof in (1).\end{proof}
}

\subsection{Spreading speeds in fixed habitats}

In this subsection, we review some properties {about} the spreading speeds of the nonlocal dispersal equation \eqref{main-fixed-domain-eq}.
{Let $\mu^*>0$ and $c^*$ satisfy that
\begin{equation}
\label{Spreading_c}
c^*=\inf_{\mu>0}\frac{\int_{\RR} e^{-\mu z}k(z)dz-1+r}{\mu}=\frac{\int_{\RR} e^{-\mu^* z}k(z)dz-1+r}{\mu^*}.
\end{equation}}

\begin{proposition}
\label{c-star-prop}
\begin{itemize}
\item[(1)] For any $u_0\in X^+$ with nonempty compact support,
{$$
\limsup_{ t\to\infty}\sup_{|x|\ge ct} u(t,x;u_0)=0\quad \forall\,\, c>c^*
$$
and
$$
\liminf_{t\to\infty}\inf_{|x|\le ct}(u(t,x;u_0)-1)=0\quad \forall\,\, 0<c<c^*,
$$}
where $u(t,x;u_0)$ is the solution of \eqref{main-fixed-domain-eq} with $u(0,x;u_0)=u_0(x)$.

\item[(2)] For any $c\ge c^*$, \eqref{main-fixed-domain-eq} has a positive traveling wave solution $u(t,x)=\phi(x-ct)$ with
$\phi(-\infty)=1$ and $\phi(\infty)=0$. {Moreover, for $\mu \in (0,\mu^*)$ such that $c=\frac{\int_{\RR} e^{-\mu z}k(z)dz-1+r}{\mu}$, $\ds\lim_{x \to \infty}\frac{\phi(x)}{e^{-\mu x}}=1$}.
\end{itemize}
\end{proposition}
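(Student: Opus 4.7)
The plan is to establish the spreading speed and traveling wave properties of the fixed-habitat equation \eqref{main-fixed-domain-eq} by combining standard techniques for nonlocal KPP equations, as developed by Weinberger, Coville--Dupaigne, and others. Throughout, the monotonicity of the semiflow (from Proposition \ref{comparison-linear-prop} adapted to the fixed-habitat problem) and the KPP bound $f(u):=r(1-u)u \le ru$ will do much of the heavy lifting.

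For Part (1), I would first handle the upper bound. Because $ru(1-u) \le ru$, any solution $u(t,x;u_0)$ with nonnegative initial data lies below the solution of the linearized equation $w_t=\int_{\RR} k(y-x)w(t,y)dy - w + rw$. For $c>c^*$, by the definition of $c^*$ I can pick $\mu>0$ with $c\mu > \int_{\RR} e^{-\mu z}k(z)dz - 1 + r$. Then $A e^{-\mu(x-ct)}$ is a supersolution of the linear equation whenever $A$ is large enough to dominate $u_0$ on its support, and comparison gives the exponential decay of $u$ for $x \ge ct$. By the evenness of $k$ and symmetry, the same argument with $\mu$ replaced on the left ray handles $x \le -ct$. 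For the lower bound $0<c<c^*$, I would invoke the standard nonlocal KPP spreading result: construct compactly supported subsolutions from principal eigenfunctions of the linearized operator on large intervals with Dirichlet-type exterior conditions, whose principal eigenvalue becomes positive once the interval is large enough because the bulk growth rate $r$ dominates the dispersal loss; then propagate by the Weinberger-type shifting argument and use that $u\equiv 1$ is the globally attracting equilibrium on bounded sets for positive data.

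For Part (2), existence of the monotone traveling profile $\phi$ with speed $c\ge c^*$ proceeds via super- and sub-solutions combined with monotone iteration on the stationary equation
\begin{equation*}
c\phi'(\xi)+\int_{\RR}k(\eta-\xi)\phi(\eta)d\eta-\phi(\xi)+r(1-\phi(\xi))\phi(\xi)=0.
\end{equation*}
For $c>c^*$, pick $\mu\in(0,\mu^*)$ with $c\mu=\int_{\RR}e^{-\mu z}k(z)dz-1+r$, take $\overline{\phi}(\xi)=\min\{1,e^{-\mu \xi}\}$ as a supersolution, and $\underline{\phi}(\xi)=\max\{0, e^{-\mu \xi}-Me^{-\nu \xi}\}$ as a subsolution for $\nu\in(\mu,\min(2\mu,\mu^*))$ and $M$ large; standard verification uses the KPP inequality $r\phi^2\ge 0$ and the convexity of the characteristic function $g(\mu)=c\mu-\int_{\RR}e^{-\mu z}k(z)dz+1-r$. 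Monotone iteration between them yields a monotone wave $\phi$ with $\phi(-\infty)=1$ (forced by the KPP structure) and $\phi(+\infty)=0$. The critical case $c=c^*$ is then obtained by a limiting procedure: take a decreasing sequence $c_n\downarrow c^*$, normalize the corresponding waves $\phi_n$ at a common level $\phi_n(0)=1/2$, and extract a locally uniform limit via Helly's selection theorem, ruling out degeneration to $0$ or $1$ using the normalization together with a tightness estimate derived from the wave equation.

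The most delicate step is the sharp asymptotic $\phi(\xi)/e^{-\mu \xi}\to 1$ as $\xi\to\infty$ for $c>c^*$. The supersolution gives $\phi(\xi)\le e^{-\mu \xi}$ directly; the matching lower bound requires analyzing the linearization at $0$ near $+\infty$. Setting $\psi(\xi)=\phi(\xi)e^{\mu \xi}$, the equation transforms into a perturbation of a nonlocal linear problem whose bounded positive solutions are, by the choice $g(\mu)=0$ and the strict convexity of $g$, one-dimensional; a bootstrap argument exploiting the quadratic smallness of the nonlinear correction $r\phi^2$ as $\xi\to\infty$ and the normalization embedded in the construction of $\underline{\phi}$ then forces $\psi(\xi)\to 1$. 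The main obstacle is precisely this matching lower asymptotic, which also explains the restriction $\mu\in(0,\mu^*)$: at the critical exponent $\mu^*$, $g$ has a double root and bounded solutions of the linearization include an additional polynomial mode, so the simple exponential asymptotic fails.
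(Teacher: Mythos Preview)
The paper does not actually prove this proposition: it simply cites Theorem~E of \cite{ShZh1} for part~(1) and Theorem~2.4 of \cite{ShZh2} for part~(2). Your sketch is a faithful outline of the arguments that appear in those references (linearized exponential barriers for the upper spreading bound, compactly supported sub-solutions built from principal eigenfunctions for the lower bound, and the standard super/sub-solution pair $\min\{1,e^{-\mu\xi}\}$, $\max\{0,e^{-\mu\xi}-Me^{-\nu\xi}\}$ with monotone iteration for the waves, followed by a limit $c_n\downarrow c^*$ at the critical speed). So in substance you are reconstructing exactly what the paper merely invokes.

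One small correction in your upper-bound argument for part~(1): choosing $\mu$ with the \emph{strict} inequality $c\mu>\int_{\RR} e^{-\mu z}k(z)\,dz-1+r$ does make $Ae^{-\mu(x-ct)}$ a super-solution, but evaluated at $x=ct$ this bound equals $A$ and never decays, so it does not yield $\sup_{|x|\ge ct}u\to 0$. The usual fix is to pick $c'\in(c^*,c)$ and $\mu>0$ with $c'\mu=\int_{\RR} e^{-\mu z}k(z)\,dz-1+r$; then $Ae^{-\mu(x-c't)}$ is an exact solution of the linearized equation, and for $x\ge ct$ one has $x-c't\ge(c-c')t\to\infty$, giving the required uniform decay. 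With this adjustment your outline is sound.
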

\begin{proof}
(1) See Theorem E of \cite{ShZh1}.

(2) See Theorem 2.4 of \cite{ShZh2}.
\end{proof}

\section{Tail behavior of traveling waves}

In this section, {we study the tail behavior or decay behavior of  positive stationary solutions of  \eqref{main-eq1} (assuming they exist), or equivalently,  traveling wave solutions of \eqref{main-eq0}.}
 The main result of this section can then be stated as follows.

\begin{theorem}
\label{tail-thm}
Suppose that $\Phi$ is a bounded positive solution of \eqref{main-eq1}. Then  there are $M^\pm$ such that
$$
\limsup_{\xi\to\infty}\frac{\Phi(\xi)}{e^{\mu_ -\xi}}\le M^+
$$
and
$$
\limsup_{\xi\to -\infty}\frac{\Phi(\xi)}{e^{\mu_+ \xi}}\le M^-,
$$
where $\mu_\pm=\mu_\pm(0)$ and $\mu_\pm(\lambda)$ is as in \eqref{mu-plus-minus-eq}.
\end{theorem}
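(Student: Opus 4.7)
On each tail $|\xi|>A:=L+L_0$, assumption (H2) forces $f(\xi,\Phi(\xi))\equiv -q$, so on those half-lines \eqref{main-eq1} collapses to the autonomous linear nonlocal equation
\begin{equation}\label{plan-lin}
c\Phi'(\xi)+\int_\RR k(\eta-\xi)\Phi(\eta)d\eta-(1+q)\Phi(\xi)=0.
\end{equation}
Substituting $e^{\mu\xi}$ into \eqref{plan-lin} reproduces exactly the characteristic equation $g(\mu;0)=0$, so for every $M^\pm\ge 0$ the functions $\Psi_+(\xi):=M^+e^{\mu_-\xi}$ and $\Psi_-(\xi):=M^-e^{\mu_+\xi}$ are \emph{global} solutions of \eqref{plan-lin} on all of $\RR$. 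The plan is to dominate $\Phi$ on each tail by the appropriate $\Psi_\pm$ via a nonlocal comparison/maximum principle.

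\medskip

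\noindent\textbf{Step 1 (Initial majorization).} Set $K:=\sup_\RR\Phi<\infty$. Because $\mu_-<0$, $e^{\mu_-\xi}$ is decreasing, so the choice $M^+:=Ke^{-\mu_-A}$ yields $\Psi_+\ge K\ge\Phi$ on $(-\infty,A]$. Symmetrically, $M^-:=Ke^{\mu_+A}$ yields $\Psi_-\ge\Phi$ on $[-A,\infty)$.

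\medskip

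\noindent\textbf{Step 2 (Extension by a nonlocal maximum principle).} Let $R:=\Psi_+-\Phi$; this solves \eqref{plan-lin} on $(A,\infty)$, is nonnegative on $(-\infty,A]$, and is bounded below on $\RR$ by $-K$. Suppose for contradiction that $m:=-\inf_{(A,\infty)}R>0$, and let $(\xi_n)\subset (A,\infty)$ be a minimizing sequence.

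If $(\xi_n)$ has a bounded subsequence converging to some interior $\xi^*\in(A,\infty)$, then $R(\xi^*)=-m$, $R'(\xi^*)=0$, and \eqref{plan-lin} gives $(k*R)(\xi^*)=(1+q)R(\xi^*)=-(1+q)m$. However $R\ge -m$ everywhere with $R\ge 0$ on the nonempty set $(-\infty,A]$, and $k$ is continuous with $k(0)>0$, so $(k*R)(\xi^*)>-m>-(1+q)m$ (using $q>0$), a contradiction. If instead $\xi_n\to+\infty$, shift $\Phi_n(\xi):=\Phi(\xi+\xi_n)$. The uniform Lipschitz bound on $\Phi$ (obtained from \eqref{main-eq1} together with the boundedness of $\Phi$ and $f$) and Arzel\`a--Ascoli yield a $C_{\mathrm{loc}}$-convergent subsequence whose limit $\Phi_\infty\ge 0$ is bounded; because $\xi_n\to\infty$, (H2) forces $\Phi_\infty$ to satisfy \eqref{plan-lin} on \emph{all} of $\RR$, and since $\Psi_+(\xi+\xi_n)\to 0$ locally uniformly we obtain $\Phi_\infty(0)=m>0$, so $\Phi_\infty\not\equiv 0$. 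But the Fourier symbol of \eqref{plan-lin} is $ic\xi+\hat k(\xi)-(1+q)$, whose real part $\hat k(\xi)-(1+q)\le -q<0$ is bounded away from zero (as $k$ is even and a probability density, so $\hat k$ is real with $|\hat k|\le 1$); hence the symbol is nonvanishing and the only tempered solution of \eqref{plan-lin} on $\RR$ is identically $0$, again a contradiction. Equivalently, one may translate $\Phi_\infty$ a second time near its supremum and apply the same interior maximum-principle computation.

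\medskip

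\noindent\textbf{Step 3 (Tail at $-\infty$).} The mirror argument, with $\Psi_-$ in place of $\Psi_+$ and the half-line $\xi<-A$ in place of $\xi>A$, yields $\Psi_-\ge\Phi$ on $(-\infty,-A]$, giving the second limsup estimate.

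\medskip

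\noindent\textbf{Main obstacle.} Step 2 is the technical heart: since the convolution $(k*R)$ samples $R$ over all of $\RR$, no purely local pointwise maximum principle is available on an unbounded half-line. The dichotomy between an \emph{interior} minimum (closed by the strict convolution inequality coming from the nontrivial positive part of $R$ on $(-\infty,A]$) and an \emph{escape} minimum at $+\infty$ (closed by translation-compactness combined with the nonexistence of nontrivial bounded solutions of \eqref{plan-lin} on $\RR$, via either the Fourier symbol or a repeated maximum-principle step) is precisely where the nonlocal character of the equation has to be confronted.
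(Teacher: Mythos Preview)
Your argument is correct (the one imprecision is that in Case~(a) you only get $(k*R)(\xi^*)\ge -m$, not strict; the contradiction still follows because $-m>-(1+q)m$ strictly), but it is genuinely different from the paper's proof. The paper does not work on the unbounded half-line at all: instead it builds a \emph{two-parameter} barrier $\psi_\tau^+(\xi)=k_1^+e^{\mu_-(\xi-R^+)}+k_2^+e^{\mu_+(\xi-R^+)}$ on each \emph{bounded} interval $(R^+,R^++\tau)$, with $k_1^+,k_2^+$ chosen so that $\psi_\tau^+$ equals $M:=\sup\Phi$ at both endpoints; convexity then forces $\psi_\tau^+\ge M$ outside the interval, so $\psi_\tau^+$ is a super-solution of the nonlocal Dirichlet problem on $(R^+,R^++\tau)$ with boundary datum $M$. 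Since $\Phi\le M$ is a sub-solution of the same Dirichlet problem, the bounded-domain comparison principle (Proposition~\ref{comparison_Nolocal_D-prop}, where the minimum is always attained) gives $\Phi\le\psi_\tau^+$ on $(R^+,R^++\tau)$, and letting $\tau\to\infty$ kills the $e^{\mu_+\xi}$ term, leaving $\Phi(\xi)\le Me^{\mu_-(\xi-R^+)}$. The trade-off: the paper's route sidesteps entirely the escape-to-infinity scenario and the Liouville-type step you need in Case~(b), at the price of introducing the auxiliary growing exponential $e^{\mu_+\xi}$ to close up the barrier on a finite window. Your route is more direct and exposes the underlying rigidity (no nontrivial bounded solution of the tail equation on all of $\RR$), but requires translation-compactness and either a Fourier-multiplier or iterated maximum-principle argument; note incidentally that in your Case~(b) the limit $\Phi_\infty$ already attains its supremum $m$ at $0$ (since $\Phi_\infty(\xi)\le\limsup_n(\Psi_+(\xi+\xi_n)+m)=m$), so the second translation and the Fourier detour are in fact unnecessary.
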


To prove the above theorem, we first prove a lemma.

For given $M>0$, $R^+>L+L_0$, $R^-<-(L+L_0)$, and $\tau>0$, consider the following nonlocal Dirichlet problems,
\begin{equation}
\label{Nonlocal-D-const-eq1}
\begin{cases}
c v'(\xi)+\int_{\RR} k(\xi-\eta) v(\eta)d\eta-v(\xi)-qv(\xi)=0,\quad R^+< \xi< R^++\tau \cr
v(\xi)=M, \,\, \xi \notin (R^+,R^++\tau),
\end{cases}
\end{equation}
and
\begin{equation}
\label{Nonlocal-D-const-eq2}
\begin{cases}
c v'(\xi)+\int_{\RR} k(\xi-\eta) v(\eta)d\eta-v(\xi)-qv(\xi)=0,\quad R^--\tau< \xi< R^-\cr
v(\xi)=M, \,\, \xi \notin (R^- - \tau,R^-).
\end{cases}
\end{equation}
 Let
\begin{equation}
\label{Nonlocal-D-const-sol-eq1}
\psi_{\tau}^+(\xi)=k_1^+e^{\mu_- (\xi-R^+)}+k_2^+e^{\mu_+ (\xi-R^+)},
 \end{equation}
 and
 \begin{equation}
\label{Nonlocal-D-const-sol-eq2}
\psi_{\tau}^-(\xi)=k_1^-e^{\mu_- (\xi-R^-)}+k_2^-e^{\mu_+ (\xi-R^-)},
 \end{equation}
 where $k_1^+=M\frac{e^{\mu_+ \tau}-1}{e^{\mu_+ \tau}-e^{\mu_- \tau}}$, $k_2^+=M\frac{1-e^{\mu_- \tau}}{e^{\mu_+ \tau}-e^{\mu_- \tau}}$,
 and  $k_1^-=M\frac{1-e^{-\mu_+ \tau}}{e^{-\mu_- \tau}-{e^{-\mu_+ \tau}}}$, $k_2^-=M\frac{e^{-\mu_- \tau}-1}{e^{-\mu_- \tau}-e^{-\mu_+ \tau}}$.

\begin{lemma}
\label{end-behavior-lm1}
 $\psi_{\tau}^+(\xi)$ is a super-solution of  \eqref{Nonlocal-D-const-eq1} and  $\psi_{\tau}^-(\xi)$ is a super-solution of  \eqref{Nonlocal-D-const-eq2},
 { that is, they are super-solutions of \eqref{Nolocal-Deq} with $a=R^+$,  $b=R^++\tau$, and $g(\xi)\equiv M$, and
 $a=R^--\tau$, $b=R^-$, and $g(\xi)\equiv M$, respectively.}
\end{lemma}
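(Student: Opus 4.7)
The plan is to view $\psi_\tau^+$ as defined by \eqref{Nonlocal-D-const-sol-eq1} on all of $\RR$ and to verify directly the two conditions in the super-solution definition for \eqref{Nolocal-Deq} with $a = R^+$, $b = R^+ + \tau$, $g \equiv M$, and coefficient $q(\xi)\equiv -q$. Since $\psi_\tau^+$ does not depend on $t$, the conditions reduce to showing (i) $\mathcal{L}_D\psi_\tau^+(\xi) \leq 0$ for $\xi\in(R^+,R^++\tau)$, and (ii) $\psi_\tau^+(\xi) \ge M$ for $\xi\notin(R^+,R^++\tau)$.

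First I would handle the integro-differential condition, and in fact show it holds with equality on all of $\RR$. Each exponential $e^{\mu_\pm(\xi - R^+)}$ solves the homogeneous equation, since substituting $z=\eta-\xi$ (and using $k(z)=k(-z)$ from (H1)) produces
\[
\mathcal{L}_D\bigl(e^{\mu_\pm(\xi-R^+)}\bigr) = \Bigl[c\mu_\pm + \int_\RR e^{\mu_\pm z}k(z)\,dz - 1 - q\Bigr] e^{\mu_\pm(\xi-R^+)} = g(\mu_\pm(0);0)\, e^{\mu_\pm(\xi-R^+)} = 0,
\]
which vanishes by the very definition \eqref{mu-plus-minus-eq} of $\mu_\pm = \mu_\pm(0)$. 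Convergence of the integrals is built into the existence of $\mu_\pm$ under (H1). By linearity $\mathcal{L}_D\psi_\tau^+\equiv 0$, so (i) holds with equality.

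For (ii) I would use convexity. Direct algebra from the formulas for $k_1^+,k_2^+$ yields $\psi_\tau^+(R^+) = k_1^+ + k_2^+ = M$ and $\psi_\tau^+(R^++\tau) = k_1^+e^{\mu_-\tau} + k_2^+e^{\mu_+\tau} = M$. Using $\mu_- < 0 < \mu_+$ and $\tau>0$ one checks that $e^{\mu_+\tau}>1>e^{\mu_-\tau}$, so both $k_1^+$ and $k_2^+$ are strictly positive. Hence $\psi_\tau^+$ is a strictly positive combination of two strictly convex exponentials, so it is strictly convex on $\RR$. A strictly convex function equal to $M$ at the two distinct points $R^+$ and $R^++\tau$ must exceed $M$ on the complement of $[R^+,R^++\tau]$, giving (ii). The argument for $\psi_\tau^-$ is entirely parallel: positivity of $k_1^-,k_2^-$ follows from $-\mu_- > 0 > -\mu_+$ and $\tau>0$; the coefficients are designed so that $\psi_\tau^-(R^-)=M=\psi_\tau^-(R^--\tau)$; strict convexity gives the outer bound; and the same characteristic identity $g(\mu_\pm(0);0)=0$ kills each exponential in $\mathcal{L}_D\psi_\tau^-$. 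There is no real obstacle here — the only things to watch are the sign bookkeeping on the $k_i^\pm$ and the interchange of differentiation and the convolution integral, both of which are routine under (H1).
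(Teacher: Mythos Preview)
Your proof is correct and follows essentially the same route as the paper: both verify that each exponential $e^{\mu_\pm(\xi-R^\pm)}$ annihilates the operator via the characteristic identity $g(\mu_\pm;0)=0$, then use strict convexity (the paper via $(\psi_\tau^\pm)''>0$ and a mean-value-theorem argument, you via the equivalent observation that a strictly convex function equal to $M$ at two points exceeds $M$ outside) to obtain the boundary inequality. Your explicit verification that $k_1^\pm,k_2^\pm>0$ is a detail the paper leaves implicit when asserting $(\psi_\tau^+)''>0$.
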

\begin{proof}
Consider
\begin{equation}
\label{Tail-eq}
c v'(\xi)+\int_{\RR} k(\xi-\eta) v(\eta)d\eta-v(\xi)-qv(\xi)=0.
\end{equation}
Let $v(\xi)=e^{\mu \xi}$ and then the characteristic equation of \eqref{Tail-eq} becomes
\begin{equation}
\label{Tail-eigen}
c \mu +\int_{\RR} e^{\mu(\eta-\xi)}k(\xi-\eta)d\eta-1-q=g(\mu;0)=0,
\end{equation}
where
$$
g(\mu;0)=c\mu+\int_{\RR} e^{\mu\eta}k(\eta)d\eta -1-q.
$$

 Note that $g(\mu_{\pm};0)=0$. Thus, $\psi(\xi)=k_1e^{\mu_- \xi}+k_2e^{\mu_+ \xi}$ is a solution of \eqref{Tail-eq} { for any choice of the scalars $k_1$ and $k_2$.}

Let $\psi_{\tau}^+$ be as in \eqref{Nonlocal-D-const-sol-eq1}. Note that $\psi_{\tau}^+(R^+)=\psi_{\tau}^+(R^++\tau)=M$. Then there exists a $\theta \in (R^+, R^++\tau)$ such that $(\psi_{\tau}^+)^{'}(\theta)=0$. $(\psi_{\tau}^+)^{''}(\xi)>0$ implies that $(\psi_{\tau}^+)^{'}(\xi)< 0$ for $\xi<\theta$ and $(\psi_{\tau}^+)^{'}(\xi)> 0$ for $\xi>\theta$. Therefore $\psi_{\tau}^+(\xi)\geq M $ for $\xi\leq R^+$ and $\xi \geq R^++\tau$. By definition, $\psi_{\tau}^+(\xi)$ is a super-solution of  \eqref{Nonlocal-D-const-eq1}.

Similarly, we can prove that $\psi_{\tau}^-(\xi)$ is a super-solution of  \eqref{Nonlocal-D-const-eq2}.
\end{proof}

We now prove Theorem \ref{tail-thm}.

\begin{proof} [Proof of Theorem \ref{tail-thm}]
Choose $M=\ds\max_{\xi}{\Phi(\xi)}$. { Observe that $v(t,\xi)=\Phi(\xi)$ is a sub-solution of \eqref{Nolocal-Deq} with
$a=R^+$, $b=R^++\tau$, and $g(\xi)\equiv M$ for any $R^+>L+L_0$ and any $\tau>0$.}
Then { by Lemma \ref{end-behavior-lm1} and}  Proposition \ref{comparison_Nolocal_D-prop}, for any given $R^+>L+L_0$ and $\tau>0$, $\Phi (\xi)\leq \psi_{\tau}^+(\xi)$ for $\xi\in (R^+,R^++\tau)$.
Note that
$$\lim_{\tau\to \infty}\psi_\tau^+(\xi) = M e^{\mu_{-}(\xi-R^+)}.
$$
We then have that
\begin{equation}
\label{tail-eq1}
\Phi (\xi)\leq Me^{\mu_-(\xi-R^+)}\quad \forall\,\, \xi>R^+,
\end{equation}
and thus
$$
0\le \liminf_{\xi\to\infty}\frac{\Phi(\xi)}{e^{\mu_-\xi}}\le \limsup_{\xi\to\infty}\frac{\Phi(\xi)}{e^{\mu_-\xi}}\le M^+:=Me^{-\mu_- R^+} .
$$

Similarly, we have that
\begin{equation}
\label{tail-eq2}
\Phi (\xi)\leq Me^{\mu_+(\xi-R^-)}\quad \forall\,\, \xi<R^-,
\end{equation}
and thus
$$
0\le  \liminf_{\xi\to -\infty}\frac{\Phi(\xi)}{e^{\mu_+\xi}}\le \limsup_{\xi\to -\infty}\frac{\Phi(\xi)}{e^{\mu_+\xi}}\le M^-:=Me^{-\mu_+ R^-} .
$$
\end{proof}

\begin{remark}
\label{tail-rk}
{In general, it remains an open question} whether the limits $\ds\lim_{\xi\to\infty}\frac{\Phi(\xi)}{e^{\mu_-\xi}}$ and $\ds\lim_{\xi\to -\infty}\frac{\Phi(\xi)}{e^{\mu_+ \xi}}$ exist.
\end{remark}

\section{Equivalence of the persistence  and the existence of traveling wave solutions}

In this section, we show that the occurrence of persistence and the existence of traveling wave solutions are equivalent.

\begin{theorem}
\label{equivalence-thm}
The following two statements are equivalent:
\begin{itemize}

\item[(1)] Persistence occurs in \eqref{main-eq0}

\item[(2)] There are traveling wave solutions of \eqref{main-eq0} with speed $c$.
\end{itemize}
\end{theorem}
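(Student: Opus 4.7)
My plan is to prove the two implications separately; in both cases the argument compares $v(t,\xi;u_0)$ with a carefully chosen stationary sub- or super-solution of \eqref{main-eq} and then lets $t\to\infty$. The three main tools are the comparison principle (Proposition~\ref{comparison-linear-prop}(2) together with its standard sub/super-solution version for \eqref{main-eq}), the compact-open continuity in the initial data from Proposition~\ref{basic-convergence}(2), and the monotonicity $\partial_u f(\xi,u)\le 0$ for all $\xi\in\RR$ and $u\ge 0$, which one reads off directly from the four cases in (H2).

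For (2) $\Rightarrow$ (1), let $\Phi$ be a bounded positive solution of \eqref{main-eq1} and fix $u_0\in X^+$ with $\inf_{\xi\in\RR}u_0(\xi)>0$. Since $\Phi$ is bounded, I can choose $\epsilon\in(0,1]$ small enough that $\epsilon\Phi\le u_0$ pointwise. Plugging $\epsilon\Phi$ into \eqref{main-eq} and using \eqref{main-eq1}, the sub-solution inequality reduces to $[f(\xi,\epsilon\Phi(\xi))-f(\xi,\Phi(\xi))]\Phi(\xi)\ge 0$, which holds by the monotonicity of $f$ in $u$ and $\epsilon\Phi\le\Phi$. Comparison then yields $v(t,\xi;u_0)\ge v(t,\xi;\epsilon\Phi)\ge\epsilon\Phi(\xi)$ for all $t\ge 0$ and $\xi\in\RR$. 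For any $K>0$, $\inf_{|\xi|\le K}\Phi(\xi)>0$ by continuity and positivity of $\Phi$, which is exactly the persistence condition.

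For (1) $\Rightarrow$ (2), I exploit a constant super-solution. Inspecting the four cases in (H2) shows that $f(\xi,u)\le 0$ for every $\xi\in\RR$ whenever $u\ge 1$, so for any $\alpha\ge 1$ the constant function $u_0\equiv\alpha$ is a stationary super-solution of \eqref{main-eq}. Therefore $t\mapsto v(t,\xi;\alpha)$ is non-increasing and bounded below by $0$, so the pointwise limit $\Phi(\xi):=\lim_{t\to\infty}v(t,\xi;\alpha)$ exists and satisfies $0\le\Phi(\xi)\le\alpha$. Persistence, applied with $u_0\equiv\alpha$, forces $\Phi(\xi)>0$ for every $\xi\in\RR$. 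To identify $\Phi$ as a stationary solution I apply the semigroup identity $v(t_n+s,\xi;\alpha)=v(s,\xi;v(t_n,\cdot;\alpha))$: the left-hand side tends to $\Phi(\xi)$ as $t_n\to\infty$, and once I know that $v(t_n,\cdot;\alpha)\to\Phi$ uniformly on bounded sets, Proposition~\ref{basic-convergence}(2) lets me pass to the limit on the right to obtain $v(s,\xi;\Phi)=\Phi(\xi)$ for every $s\ge 0$. Hence $\Phi$ is a bounded positive stationary solution of \eqref{main-eq}, and $u(t,x)=\Phi(x-ct)$ is a traveling wave of \eqref{main-eq0}.

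The main obstacle is upgrading the monotone pointwise convergence $v(t_n,\xi;\alpha)\downarrow\Phi(\xi)$ to the locally uniform convergence demanded by Proposition~\ref{basic-convergence}(2). I plan to handle this via Arzel\`a--Ascoli applied to $\{v(t_n,\cdot;\alpha)\}$: uniform boundedness is immediate from $0\le v\le\alpha$, while equicontinuity on every compact subset of $\RR$ follows from the equation, which bounds $v_\xi$ uniformly in $n$ when $c>0$ (and supplies a uniform modulus of continuity through the mild-solution representation when $c=0$). Any uniform subsequential limit must coincide with the pointwise limit $\Phi$, so $\Phi$ is continuous, and Dini's theorem then upgrades the full monotone sequence to uniform convergence on compact sets, closing the argument.
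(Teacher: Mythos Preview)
Your argument for $(2)\Rightarrow(1)$ is correct and in fact more direct than the paper's. The paper linearises \eqref{main-eq} at $\Phi$ and uses a two-step comparison (first showing $v(t,\xi;\gamma\Phi)\le\Phi$ on $[0,T]\times[-L-L_0,L+L_0]$ for small $\gamma$, then comparing with the linearised flow), whereas you simply observe that $\epsilon\Phi$ is a stationary sub-solution of the nonlinear equation because $f(\xi,\epsilon\Phi)\ge f(\xi,\Phi)$. This bypasses the linearisation entirely.

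For $(1)\Rightarrow(2)$ your overall plan coincides with the paper's (start from a constant super-solution, take the monotone limit $\Phi$), but the step you flag as ``the main obstacle'' is genuinely where the work lies, and your justification does not close it. You write that ``the equation bounds $v_\xi$ uniformly in $n$ when $c>0$'', presumably by rearranging $v_t=cv_\xi+\{\text{bounded}\}$. The problem is that you have no independent uniform-in-$t$ bound on $v_t$: monotonicity only gives $v_t\le 0$, and the identity $v_t-cv_\xi=u_t$ (with $u$ solving \eqref{main-eq0}) just reproduces the bounded combination $v_t-cv_\xi$, not $v_\xi$ alone. Likewise, for $c=0$ the mild-solution formula gives an $\xi$-modulus that may grow with $t$ unless you control the variational exponent. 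So neither branch of your equicontinuity argument is complete as written.

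The paper resolves this by going back to the $u$-variable: writing $w(t,x)=\int_{\RR}k(y-x)u(t,y;u_M)\,dy$, it views $\phi(t;x):=u(t,x;u_M)$ as the solution of the scalar ODE $\dot\phi=-\phi+f(x-ct,\phi)\phi+w(t,x)$ with $x$ as a parameter and smooth forcing, then applies smooth dependence on parameters to obtain the variational equation for $\psi=\phi_x$ and the variation-of-constants formula \eqref{existence-eq4}. The exponent $\mu(t;x)=-1+f_u\phi+f$ is eventually $-1-q$ along each characteristic, which makes $\psi$ (hence $u_x$ and $v_\xi$) uniformly bounded. Once you have that, your Arzel\`a--Ascoli/Dini route and the semigroup identity via Proposition~\ref{basic-convergence}(2) work; but the uniform bound on $v_\xi$ needs exactly this kind of argument, not just a rearrangement of the PDE.
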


\begin{proof}
{ First, we prove that (2) implies (1).

 Suppose that (2) holds and that $\Phi$ is a bounded positive solution of \eqref{main-eq1}.
Then $\tilde v(t,\xi;\gamma\Phi):=\gamma \Phi(\xi)$ satisfies
\begin{equation}
\label{new-aux-eq1}
v_t=cv_\xi+\int_{\RR} k(\eta-\xi)v(t,\eta)d\eta-v(t,\xi)+f(\xi,\Phi(\xi))v(t,\xi),\quad \xi\in\RR
\end{equation}
for any $\gamma\in\RR$. Let $v(t,\xi;\gamma\Phi)$ be the solution of \eqref{main-eq} with
$v(0,\xi;\gamma\Phi)=\gamma\Phi(\xi)$. Fix $T>0$. Note that
$\ds\inf_{|\xi|\le L+L_0}\Phi(\xi)>0$ and $v(t,\xi;0)\equiv 0$ for all $t\ge 0$ and $\xi\in\RR$.  Then there is $\gamma_0>0$ such that
for any $0<\gamma<\gamma_0$,
$$
v(t,\xi;\gamma\Phi)\le \Phi(\xi) \quad {\rm for}\quad 0\le t\le T,\,\,  |\xi|\le L+L_0.
$$
 This implies that for any $0<\gamma<\gamma_0$,
$$
f(\xi,\Phi(\xi))\le f(\xi, v(t,\xi;\gamma\Phi)) \quad {\rm for}\quad 0\le t\le T,\,\, \xi\in\RR.
$$
 {Thus  $v(t,\xi)=v(t,\xi;\gamma\Phi)$ is a super-solution of \eqref{new-aux-eq1} because
\begin{align*}
& v_t(t,\xi)-[cv_\xi+\int_{\RR} k(\eta-\xi)v(t,\eta)d\eta-v(t,\xi)+f(\xi,\Phi(\xi)) v(t,\xi)]\\
&=v_t-[cv_\xi+\int_{\RR} k(\eta-\xi)v(t,\eta)d\eta- v(t,\xi)+f(\xi,v)v(t,\xi)]+(f(\xi,v)-f(\xi,\Phi(\xi)))v(t,\xi)\\
&=(f(\xi,v)-f(\xi,\Phi(\xi)))v(t,\xi)\\
&\ge 0.
\end{align*}
Note that $v(t,\xi;\gamma\Phi)- \tilde v(t,\xi;\gamma\Phi) \ge -\beta_0:= -\ds\max_\xi\{\gamma \Phi(\xi)\}$ and then by the comparison principle (see  Proposition \ref{comparison-linear-prop}(1))  applied for \eqref{new-aux-eq1}, } we have that for any $0<\gamma<\gamma_0$,
$$
v(t,\xi;\gamma\Phi)\ge \tilde v(t,\xi;\gamma\Phi)=\gamma \Phi(\xi),\quad \forall\, 0\le t\le T,\,\, \xi\in\RR
$$
and then by the strong comparison principle (see Proposition \ref{comparison-linear-prop}(2))
\begin{equation}
\label{new-aux-ineq1}
v(t,\xi;\gamma\Phi)> \gamma\Phi(\xi), \quad \forall\, t > 0,\,\, \xi\in\RR.
\end{equation}
Note that for any given $u_0\in X^+$ with $\ds\inf_{\xi\in\RR}u_0(\xi)>0$, there is $0<\gamma<\gamma_0$ such that
$$
u_0(\xi)\ge \gamma \Phi(\xi),\quad \forall\,   \xi\in\RR.
$$
Then by Proposition \ref{comparison-linear-prop}(2), we have that
$$
v(t,\xi;u_0)\ge v(t,\xi;\gamma \Phi)\ge \gamma \Phi(\xi),\quad \forall\, t\ge 0,\,\, \xi\in\RR.
$$
This implies that
$$
\liminf_{t\to\infty}\inf_{|\xi|\le K} v(t,\xi;u_0)>0
$$
for any $K>0$. Therefore, (1) holds.
}

 Next, we  prove  that (1) implies (2).

Assume (1) holds, that is, persistence occurs in \eqref{main-eq0}.  {Choose  $M\gg 1$ such that  $u_M(\xi)\equiv M$ is a super-solution of \eqref{main-eq}.   Then {by the definition of persistence in \eqref{main-eq0}}}
$$
\liminf_{t\to\infty} \inf_{|\xi|\le K} v(t,\xi;u_M)>0
$$
for any $K>0$.
By Proposition \ref{comparison-linear-prop},
\begin{equation}
\label{existence-eq00}
v(t,\xi;u_M)\le u_M(\equiv M), \quad \forall\,\, t\ge 0,\,\, \xi \in\RR,
\end{equation}
and {for any $t_2>t_1\ge 0$,
\begin{equation}
\label{existence-eq0}
v(t_2,\xi;u_M)=v(t_1,\xi; v(t_2-t_1,\cdot;M))\le v(t_1,\xi;u_M)\le M,\quad \forall\,\, \xi\in\RR.
\end{equation}}
It then follows that there is $\Phi(\xi)$ such that
$$
\lim_{t\to\infty} v(t,\xi;u_M)=\Phi(\xi),\quad \forall\,\, \xi\in\RR
$$
and
$$
\inf_{|\xi|\le K}\Phi(\xi)>0,\quad \forall\, K>0.
$$

{ In the following, we show that $\Phi(\xi)$ is $C^1$ and $v(t,\xi)=\Phi(\xi)$
is a positive stationary solution of \eqref{main-eq}. To this end},
let $u(t,x;u_M)$ be the solution of \eqref{main-eq0} with $u(0,x;u_M)=u_M$.
{Note that with $\xi=x-ct$,
$$
v(t,\xi;u_M)=u(t,x;u_M).
$$
It then suffices to show that $u_t(t,x;u_M)$ and $u_{tt}(t,x;u_M)$ are uniformly bounded on $\RR^+\times\RR$, and $u_x(t,x;u_M)$ exists and is uniformly bounded and continuous  in $t\ge 0$ and $x\in\RR$.
}

{By (H2), the right hand side of \eqref{main-eq0} with $u(t,x)$ being replaced by $u(t,x;u_M)$ is uniformly continuous in $t\ge 0$ and $x\in\RR$. Hence
$u_t(t,x;u_M)$ is  uniformly continuous in $t\ge 0$ and $x\in\RR$. This implies that the right hand side of \eqref{main-eq0} with $u(t,x)$ being replaced by $u(t,x;u_M)$ is differentiable in $t$ and its derivative with respect to $t$ is  uniformly continuous in $t\ge 0$ and $x\in\RR$.
 It then follows that  $u_{tt}(t,x;u_M)$ exists and is uniformly bounded on $\RR^+\times\RR$.}

{ To show that $u_x(t,x;u_M)$ exists and is uniformly bounded and continuous  in $t\ge 0$ and $x\in\RR$, let}
\begin{equation}
\label{existence-eq1}
w(t,x)=u_t(t,x;u_M)+u(t,x;u_M)-f(x-ct,u(t,x;u_M))u(t,x;u_M).
\end{equation}
By {\eqref{main-eq0}, we have that}
$$
w(t,x)=\int_{\RR}k(y-x)u(t,y;u_M)dy.
$$
This { together with (H1)} implies that $w(t,x)$ is differentiable in $x$ and $w_x(t,x)$ is uniformly bounded and uniformly continuous.
For fixed $x$, let $\phi(t;x)=u(t,x;u_M)$.
By \eqref{existence-eq1}, $\phi(t;x)$ is the solution of
\begin{equation}
\label{exisetnce-eq2}
\begin{cases}
\frac{d\phi}{dt}=-\phi+f(x-ct,\phi)\phi+w(t,x)\cr
\phi(0;x)=w(0,x).
\end{cases}
\end{equation}
{View $x$ as a parameter in the initial value problem \eqref{exisetnce-eq2}. Note that $-\phi+f(x-ct,\phi)\phi+w(t,x)$ and $w(0,x)$ are continuously differentiable in $x\in\RR$. Then by the smooth dependence of solutions of ODEs on the parameters,} $\phi(t;x)$ is differentiable in $x$ (hence $u_x(t,x;u_M)$ exists), and $\psi(t;x)=\phi_x(t;x)$ satisfies
\begin{equation}
\label{existence-eq3}
\begin{cases}
{\frac{d\psi}{dt}=\big(-1+f_u(x-ct,\phi)\phi(t;x)+f(x-ct,\phi) \big)\psi+w_x(t,x)+f_x(x-ct,\phi)\phi(t;x)}\cr
\psi(0;x)=0.
\end{cases}
\end{equation}
Let {$\mu(t;x)=-1+f_u(x-ct,\phi)\phi(t;x)+f(x-ct,\phi)$. By the variation of constants formula}, we have that
\begin{equation}
\label{existence-eq4}
\phi_x(t,x)=\psi(t;x)=\int_0^t e^{\int_\tau^t \mu(s;x)ds}\big(w_x(\tau,x)+f_x(x-c\tau,\phi)\phi(\tau;x)\big)d\tau.
\end{equation}
It is not difficult to see from \eqref{existence-eq4} that $\psi(t;x)$ is uniformly bounded and continuous in $t\ge 0$ and $x\in\RR$.
We then have  that $u_x(t,x;u_M)$ exists and is uniformly bounded and continuous  in $t\ge 0$ and $x\in\RR$.

Recall that
$$
v(t,\xi;u_M)=u(t,x;u_M).
$$
We then have that $v_t(t,\xi;u_M)$, $v_{tt}(t,\xi;u_M)$,  and $v_\xi(t,\xi;u_M)$ exist and are uniformly bounded  and continuous in $t\ge 0$ and $\xi\in\RR$.
This implies that $\Phi^{'}(\xi)$ exists, and
$$
\lim_{t\to\infty} v(t,\xi;u_M)=\Phi(\xi),
$$
$$
\lim_{t\to\infty} v_t(t,\xi;u_M)=0,
$$
and
$$
\lim_{t\to\infty} v_\xi(t,\xi;u_M)=\Phi^{'}(\xi)
$$
locally uniformly in $\xi\in\RR$. It then follows that $v(t,\xi)=\Phi(\xi)$
is a positive stationary solution of \eqref{main-eq}.
This proves that (1) implies (2).
\end{proof}

\begin{remark}
\label{equivalence-rk}
By Theorem \ref{equivalence-thm}, the following two statements are equivalent:
\begin{itemize}
\item[(1)] Extinction occurs in \eqref{main-eq0}

\item[(2)] There are no traveling wave solutions of \eqref{main-eq0} with speed $c$.
\end{itemize}
\end{remark}

\section{Existence, {uniqueness}, and nonexistence of traveling wave solutions}

In this section, we study the existence, {uniqueness}, and nonexistence of positive traveling wave solutions of \eqref{main-eq0} with speed $c$, {or equivalently}, of positive stationary solutions of \eqref{main-eq}. {In the next result we refer to the spreading speed $c^*$ which was defined in formula \eqref{Spreading_c} in section 2.4.}

\begin{theorem}[Existence/nonexistence of traveling wave solutions]
\label{existence-thm}
{ Fix $r,q,L_0>0$.}
\begin{itemize}
\item[(1)] For given $0\le c<c^*$, there is $L^{*}\ge 0$ such that for $L>L^{*}$, \eqref{main-eq} has a positive stationary solution
$v(t,\xi)=\Phi(\xi)$. For $0<L<L^{*}$, there is no positive stationary solution of \eqref{main-eq}.

\item[(2)] For given $c>c^*$, for any $L>0$, there is no positive stationary solution of \eqref{main-eq}.
\end{itemize}
\end{theorem}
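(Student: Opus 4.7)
For Part (1), corresponding to $0 \le c < c^*$, the plan is to define $L^*$ as the infimum of those $L > 0$ for which \eqref{main-eq} admits a positive stationary solution, and to show this infimum is finite and that the underlying set is upward-closed. Upward-closure rests on the pointwise monotonicity $f(\xi, u; L_1) \le f(\xi, u; L_2)$ for $L_1 \le L_2$ (enlarging $L$ enlarges the favorable zone), combined with Theorem \ref{equivalence-thm}: if persistence holds at $L_1$, the comparison principle (Proposition \ref{comparison-linear-prop}) propagates persistence to any $L > L_1$, and Theorem \ref{equivalence-thm} then yields a positive stationary solution at $L$. To show $L^* < \infty$, I would start from a compactly supported positive initial datum $u_0$ and invoke Proposition \ref{c-star-prop}(1): for $0 \le c < c^*$, the solution of the fixed-habitat problem \eqref{main-fixed-domain-eq} converges to $1$ uniformly on bounded subsets of the moving frame. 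An argument analogous to the proof of Proposition \ref{basic-convergence} applied to the nonlinear equation \eqref{main-eq} shows that, as $L \to \infty$, its solutions converge on compact sets to the fixed-habitat solution. Hence persistence of \eqref{main-eq} holds for all sufficiently large $L$, yielding $L^* < \infty$, and monotonicity gives the dichotomy for $L$ above or below $L^*$.

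For Part (2), corresponding to $c > c^*$, I argue by contradiction: suppose $\Phi$ is a positive bounded stationary solution of \eqref{main-eq1} for some $L > 0$. First, a maximum-principle argument at a point $\xi_0$ where $\Phi$ attains its supremum (finite by Theorem \ref{tail-thm})---using $c \Phi'(\xi_0) = 0$, the nonlocal bound $\int k(\eta - \xi_0)\Phi(\eta)\, d\eta \le \Phi(\xi_0)$, and the stationary equation to force $f(\xi_0, \Phi(\xi_0)) \ge 0$---yields $\|\Phi\|_{\infty} \le 1$. Since $f(\xi, u) \le r(1 - u)$ for $u \in [0, 1]$, the function $u(t, x) := \Phi(x - c t)$ is a subsolution of the fixed-habitat equation \eqref{main-fixed-domain-eq}, so $u(t, x) \le \tilde u(t, x)$, where $\tilde u$ solves \eqref{main-fixed-domain-eq} with $\tilde u(0, \cdot) = \Phi$. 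Introducing the weighted function $w(t, x) := \tilde u(t, x)\, e^{\beta (x - c t)}$ for a suitable $\beta > 0$ and using the linear bound $\tilde u_t \le \int k(y - x) \tilde u(t, y)\, dy - \tilde u + r \tilde u$, I obtain a linear nonlocal differential inequality for $w$ with effective growth rate $\int_{\RR} e^{\beta z} k(z)\, dz - 1 + r - c \beta$. Choosing $\beta$ so that this quantity is strictly negative (feasible because $c > c^*$) and also $\beta < |\mu_-(0)|$ (so that $w(0, \cdot) = \Phi(\cdot)\, e^{\beta\, \cdot}$ is bounded, by Theorem \ref{tail-thm}), I conclude $\|w(t, \cdot)\|_{\infty} \to 0$ exponentially. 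Consequently $\tilde u(t, c t + a) \to 0$ for every fixed $a$, contradicting $\Phi(a) = u(t, c t + a) \le \tilde u(t, c t + a)$ whenever $a \in (-L, L)$, where $\Phi(a) > 0$.

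The main obstacle I anticipate is verifying that the two conditions on $\beta$ in Part (2) can be met simultaneously when $c > c^*$. Setting $h(\beta) := \int_{\RR} e^{\beta z} k(z)\, dz - 1 + r - c \beta$, the requirements are $h(\beta) < 0$ and $\beta < |\mu_-(0)|$. A convexity analysis shows that $h(0) = r > 0$ and $h(|\mu_-(0)|) = q + r > 0$ (using the defining relation $g(\mu_-(0); 0) = 0$), while $\min_{\beta > 0} h < 0$ exactly when $c > c^*$. These facts, combined with the convexity of $h$, force the open set $\{\beta > 0 : h(\beta) < 0\}$ to lie inside $(0, |\mu_-(0)|)$, so a feasible $\beta$ exists. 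All other ingredients---comparison, $L$-monotonicity, the maximum-principle bound, and the convergence as $L \to \infty$---follow from the tools already developed in Sections 2--4.
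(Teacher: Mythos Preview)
Your argument for Part (1) is essentially the paper's, just routed through Theorem~\ref{equivalence-thm} rather than through the explicit construction $\Phi(\cdot;L)=\lim_{t\to\infty}v(t,\cdot;\bar u,L)$; both rest on the same two pillars (monotonicity of $f(\cdot,\cdot;L)$ in $L$ plus convergence of solutions to those of \eqref{main-fixed-domain-eq} as $L\to\infty$), and both are fine.

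Part (2) is where your route genuinely diverges. The paper proves the key inequality $\mu^*<|\mu_-|$ and then invokes Proposition~\ref{c-star-prop}(2): for $\tilde c\in(c^*,c)$ it takes the bounded traveling wave $\phi(x-\tilde c t)$ of \eqref{main-fixed-domain-eq}, checks that $\gamma\phi(x-(\tilde c-c)t)$ is a super-solution of \eqref{main-eq}, dominates $\Phi$ initially (using the tail estimate and $\tilde\mu<\mu^*<|\mu_-|$), and lets $t\to\infty$. Your approach bypasses the traveling-wave machinery entirely, replacing it with the explicit exponential super-solution $C_0e^{-\beta(x-ct)}e^{h(\beta)t}$ of the linearized fixed-habitat equation. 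This is cleaner in that it uses only Proposition~\ref{c-star-prop}(1) and elementary convexity, and the convexity step is correct: since $h(|\mu_-|)=q+r>r=h(0)$, the convex sublevel set $\{h\le r\}$ is an interval containing $0$ but not $|\mu_-|>0$, so $\{h<0\}\subset\{h\le r\}\subset(0,|\mu_-|)$. The only place to be careful is that your weighted comparison involves the unbounded function $e^{-\beta x}$; the cleanest fix is to compare the bounded $\tilde u$ directly against the exact linear solution $\bar u(t,x)=C_0e^{-\beta x}e^{(\int k e^{\beta z}dz-1+r)t}$ and note that $\bar u-\tilde u\ge -1$, so Proposition~\ref{comparison-linear-prop}(1) still applies. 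With that detail handled, your argument goes through and is arguably more self-contained than the paper's, at the cost of being slightly less geometric.
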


\begin{proof}
{ First of all, we note that $f(x,u)$ depends on $r$, $q$, $L$, and $L_0$. For clarity, for fixed $r$, $q$, and $L_0$, we write
$f(x,u)$ as $f(x,u;L)$. By (H2), for $0<L_1<L_2$, we have that
\begin{equation}
\label{xx-eq1}
f(x,u;L_1)\le f(x,u,L_2),\quad {\rm for}\quad 0\le u\le 1,\,\, x\in\RR.
\end{equation}

Next, let $\bar u(x)\equiv 1$. We have that $v(t,x)\equiv 1$ is a super-solution of \eqref{main-eq}.}  Let $v(t,x;\bar u,L)$ be the solution of \eqref{main-eq} with $v(0,x;\bar u,L)=\bar u(x)\equiv 1$.
By the arguments in the proof of Theorem \ref{equivalence-thm},
 there is $\Phi(x;L)$ such that
$$
\lim_{t\to\infty} v(t,x;\bar u,L)=\Phi(x;L)\le 1
$$
locally uniformly in $x\in\RR$, and $v(x)=\Phi(x;L)$
is a {nonnegative} stationary solution of \eqref{main-eq}.
Moreover, {note that $u\equiv 0$ is a solution of \eqref{main-eq}. Then by Proposition \ref{comparison-linear-prop},}  we have that either $\Phi(x;L)\equiv 0$, or $\Phi(x;L)$ is a positive stationary solution of \eqref{main-eq}.
{By \eqref{xx-eq1} and Proposition \ref{comparison-linear-prop}, for any $0<L_1<L_2$, we have that
$$
v(t,x;\bar u,L_1)\le v(t,x;\bar u,L_2),\quad \forall\,\, t\ge 0,\,\, x\in\RR.
$$
Hence}
$$
\Phi(x;L_1)\le \Phi(x;L_2),\quad \forall\,\, x\in\RR,\,\, L_1\le L_2.
$$
Therefore, there is $0\le L^*\le\infty$ such that
$$
\begin{cases}
\Phi(x;L)\equiv 0,\quad \forall \,\, 0<L\le L^*\cr
\Phi(x;L)>0,\quad \forall\,\, L>L^*.
\end{cases}
$$

{ We claim that any nonnegative stationary solution $v^*(x)$ of \eqref{main-eq} satisfies
that
$$
v^*(x)\le 1,\quad \forall\,\, x\in\RR.
$$
In fact, suppose that $v^*(x)$ is a nonnegative stationary solution of \eqref{main-eq}. Let $M=\ds\sup_{x\in\RR}v^*(x)$.
If $M>1$, then $v(t,x)\equiv M$ is a super-solution of \eqref{main-eq}. By the comparison principle
(Proposition \ref{comparison-linear-prop}),
$$
v^*(x)\le v(t,x;M)<M,\quad \forall\,\, t>0,\,\, x\in\RR.
$$
This together with  the tail property (Theorem \ref{tail-thm})  implies that
$$
M=\sup_{x\in\RR}v^*(x)=\max_{x\in\RR} v^*(x)<M,
$$
which is a contradiction. Hence $M\le 1$. It then follows that
$$
v^*(x)\le v(t,x;\bar u,L),\quad \forall\,\, t>0,\,\, x\in\RR
$$
and then
$$
v^*(x)\le \Phi(x;L).
$$ }
Hence, if
$\Phi(x;L)\equiv 0$, then  \eqref{main-eq} has no positive stationary solution.

We now prove (1) and (2).

(1) It suffices to prove that $L^*<\infty$.
 To this end, first, for $0<c<c^*$, choose $c^{'}\in (c,c^*)$ and fix it.  For given $u_0\in X^+$ with nonempty and compact support ${\rm supp}(u_0)$, by Proposition
\ref{c-star-prop},
$$
{\liminf_{t\to\infty}\inf_{|x|\le c^{'}t} (u_\infty(t,x;u_0)-1)=0,}
$$
where $u_\infty(t,x;u_0)$ is the solution of \eqref{main-fixed-domain-eq} with  $u_\infty(0,x;u_0)=u_0(x)$.
 Then we have that
$$
\liminf_{t\to\infty}\inf_{-(c^{'}+c)t\le x\le (c^{'}-c)t} (u_\infty(t,x+ct;u_0)-1)=0.
$$

Next, choose $u_0$ to be $C^1$. {View $x$ as a parameter and by the smooth dependance of solutions of ODEs on the parameters, then} $u_\infty(t,x;u_0)$ is also $C^1$ in $x$ and $v_\infty(t,x;u_0):=u_\infty(t,x+ct;u_0)$ is the solution  of
\eqref{fixed-domain-eq1}
with $v_\infty(0,x;u_0)=u_0(x)$ and satisfies
$$
{\liminf_{t\to\infty}\inf_{-(c^{'}+c)t\le x\le (c^{'}-c)t} (v_\infty(t,x;u_0)-1)=0.}
$$

Now choose $u_0$ such that $u_0\le 1/2$. Then there is $T>0$ such that
$$
v_\infty(T,x;u_0)>1/2,\quad x\in {\rm supp}(u_0).
$$

Let $v(t,x;u_0,L)$ be the solution of \eqref{main-eq} with $v(0,x;u_0,L)=u_0(x)$.
{ By Proposition \ref{basic-convergence},}
$$
\lim_{L\to\infty} v(t,x;u_0,L)= v_\infty(t,x;u_0)
$$
locally uniformly in $(t,x)\in \RR^+\times\RR$. Then
$$
v(T,x;u_0,L)\ge u_0(x), \quad {\rm for}\quad L\gg 1.
$$
Then by {the comparison principle (Proposition \ref{comparison-linear-prop})}, we have that for $L\gg 1$,
{
$$
v(mT,x;u_0,L)\ge v((m-1)T,x;u_0,L)\ge u_0(x),\quad \forall \,\, x\in\RR,\,\, m=1,2,\cdots.
$$
Since $v(mT,x;\bar u,L) \ge v(mT,x;u_0,L)$ and $\ds \Phi(x;L)=\lim_{m \to \infty}v(mT,x;\bar u,L)$ , we have }that
$$
\Phi(x;L)\ge u_0(x),\quad \forall\,\, x\in\RR
$$
for $L\gg 1$. Hence $u(t,x)=\Phi(x;L)$ is a positive stationary solution of \eqref{main-eq} when $L\gg 1$,
and  then $L^*<\infty$.

(2) It suffices to prove that $L^{*}=\infty$. To this end, let $\mu^*>0$ be such that
$$
c^*=\frac{\int_{\RR} e^{\mu^*\eta}k(\eta)d\eta-1+r}{\mu^*}=\inf_{\mu>0} \frac{\int_{\RR} e^{\mu\eta}k(\eta)d\eta-1+r}{\mu}.
$$
Recall that $\mu_-<0<\mu_+$ are such that
\begin{equation}
\label{mu_c}
c \mu_\pm +\int_{\RR} e^{\mu_\pm\eta}k(\eta)d\eta-1-q=0.
\end{equation}
We claim that $\mu^*<|\mu_-|$.

Indeed,
let
$$
h(\mu)=-c\mu+\int_{\RR}e^{\mu \eta}k(\eta)d\eta-1-q.
$$
Then {$h(-\mu_-)=0$}, $h^{'}(\mu)>0$ for $\mu\ge -\mu_-$. Let
$$
\tilde h(\mu)=\frac{\int_{\RR} e^{\mu\eta}k(\eta)d\eta-1-q}{\mu}
$$
for $\mu>0$. 
Note that {$\int_{\RR} \eta^m k(\eta)d\eta=0$ for odd integers $m=1,3,5,...$ and we have that
\begin{align*}
\tilde h^{'}(\mu)&=\frac{\int_{\RR}\eta e^{\mu \eta}k(\eta)d\eta \cdot\mu -(\int_{\RR}e^{\mu\eta}k(\eta)d\eta-1-q)}{\mu^2}\\
&=\frac{1}{\mu^2}\Big[ \int_{\RR} \big(\eta^2 k(\eta)\mu^2+\frac{\eta^4}{3!}k(\eta)\mu^4+\frac{\eta^6}{5!}k(\eta)\mu^6+\cdots\big)d\eta\\
&\quad -\int_{\RR}\big(k(\eta)+\frac{\eta^2}{2!}k(\eta)\mu^2+\frac{\eta^4}{4!}k(\eta)\mu^4+\cdots\big)d\eta+1+q\Big]\\
&=\frac{1}{\mu^2}\Big[ q+\int_{\RR} \big((1-\frac{1}{2!})\eta^2 k(\eta)\mu^2+(\frac{1}{3!}-\frac{1}{4!})\eta^4k(\eta)\mu^4+(\frac{1}{5!}-\frac{1}{6!})\eta^6k(\eta)\mu^6+\cdots\big)d\eta\Big]\\
&>0, \quad {\rm for}\quad \mu>0.
\end{align*}
Then $\mu=-\mu_-=|\mu_-|>0$ is the only solution of $\tilde h(\mu)=c$ in the interval $(0,\infty)$, and
\begin{align*}
\tilde h(\mu^*)&=\frac{\int_{\RR} e^{\mu^*\eta}k(\eta)d\eta-1-q}{\mu^*}\\
&<\frac{\int_{\RR} e^{\mu^*\eta}k(\eta)d\eta-1+r}{\mu^*}=c^*\\
&<c=\frac{\int_{\RR} e^{\mu_-\eta}k(\eta)d\eta-1-q}{-\mu_-}\quad  \text{(by Equation \eqref{mu_c})}\\
&=\frac{\int_{\RR} e^{-|\mu_-|\eta}k(\eta)d\eta-1-q}{|\mu_-|}\\
&=\frac{\int_{\RR} e^{|\mu_-|\eta}k(\eta)d\eta-1-q}{|\mu_-|} \quad \text{(by $k(\eta)=k(-\eta)$)}\\
&=\tilde h(|\mu_-|).
\end{align*}}
It then follows that $|\mu_-|>\mu^*$.

We now assume that \eqref{main-eq} has a positive stationary solution $u=\Phi(\xi)$. By Theorem \ref{tail-thm}, there is $M^+>0$ such that
$$
\Phi(\xi)\le M^+ e^{-|\mu_-|\xi},\quad {\rm for}\quad \xi\gg 1.
$$
Choose $\tilde c\in (c^*,c)$. By Proposition \ref{c-star-prop}, \eqref{main-fixed-domain-eq} has a traveling wave solution
$u(t,x)=\phi(x-\tilde ct)$ {such that
\begin{equation}
\label{phi_fixed_domain}
\phi(-\infty)=1, \phi(+\infty)=0, {\rm and} \lim_{x\to\infty}\frac{\phi(x)}{e^{-\tilde \mu x}}=1,
\end{equation}
}
where $0<\tilde\mu<\mu^*(<|\mu_-|)$ is such that
$$
\tilde c=\frac{\int_{\RR} e^{-\tilde \mu\eta}k(\eta)d\eta-1+r}{\tilde \mu}.
$$
 This implies that
$v(t,x)=\phi(x-(\tilde c-c)t)$ is a { super-solution of \eqref{main-eq}.  Then by (H2), $v(t,x;\gamma)=\gamma \phi(x-(\tilde c-c)t)$ is a  super-solution of \eqref{main-eq} for any $\gamma\ge 1$.  By Theorem \ref{tail-thm}, there is $\gamma\ge 1$ such that
that
$$
\Phi(x)\le \gamma \phi(x).
$$
Hence, by the comparison principle (Proposition \ref{comparison-linear-prop}),}  we have that
$$
\Phi(x)\le \gamma \phi(x-(\tilde c-c)t),\quad \forall\,\, x\in\RR,\,\, t\ge 0.
$$
{Letting} $t\to\infty$, {since $\tilde c<c$ we have that $x-(\tilde c-c)t \to \infty$ and  so by \eqref{phi_fixed_domain}, $\ds \lim_{t \to \infty}\phi(x-(\tilde c-c)t)=\phi(\infty)=0$ , implying that }
$$
\Phi(x)\le 0,\,\, \forall\,\, x\in\RR,
$$
which is a contradiction.

Hence for any $L>0$, \eqref{main-eq} has no positive stationary solution and then $L^{*}=\infty$.
\end{proof}

The following corollary follows directly from the proof of the above theorem.

\begin{corollary}
\label{extinction-cor}
Suppose that there is no positive traveling wave solution of \eqref{main-eq0}. Then for any $u_0\ge 0$,
$$
\lim_{t\to\infty} v(t,x;u_0)=0
$$
locally uniformly in $x\in\RR$, where $v(t,x;u_0)$ is the solution of \eqref{main-eq} with $v(0,x;u_0)=u_0(x)$.
\end{corollary}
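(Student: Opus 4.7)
The plan is to reduce to the monotone-decreasing dynamics from a large constant super-solution, which was already analyzed in the proof of Theorem \ref{existence-thm}, and then invoke the hypothesis to force the limiting stationary profile to vanish.

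First, I would fix $u_0\in X^+$ and choose a constant $M\ge\max\{1,\|u_0\|\}$. A direct check against (H2) shows that for such $M$ the function $v(t,\xi)\equiv M$ satisfies $f(\xi,M)\le 0$ for all $\xi\in\RR$, so it is a super-solution of \eqref{main-eq}. By Proposition \ref{comparison-linear-prop}(2),
\[
0\le v(t,\xi;u_0)\le v(t,\xi;M)\le M,\qquad t>0,\ \xi\in\RR,
\]
and by the same semigroup/super-solution argument used around \eqref{existence-eq00}--\eqref{existence-eq0}, the map $t\mapsto v(t,\xi;M)$ is monotone non-increasing in $t$ for each fixed $\xi$. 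Hence there exists a limit $\Phi_M(\xi):=\lim_{t\to\infty}v(t,\xi;M)\ge 0$ pointwise in $\xi\in\RR$.

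Next, repeating the regularity argument from the proof of Theorem \ref{equivalence-thm} (bounding $v_t$, $v_{tt}$, $v_\xi$ uniformly via the integral equation and the variation-of-constants formula \eqref{existence-eq4}), I would conclude that $\Phi_M\in C^1(\RR)$ and that $v(t,\xi)=\Phi_M(\xi)$ is a nonnegative stationary solution of \eqref{main-eq}. The key dichotomy now is that $\Phi_M$ is either identically zero or strictly positive. Indeed, if $\Phi_M\not\equiv 0$, then $\Phi_M\ge 0$, $\Phi_M\not\equiv 0$, and the strong comparison principle (Proposition \ref{comparison-linear-prop}(2)) applied to the ordered pair $0\le \Phi_M$ yields $v(t,\xi;\Phi_M)>v(t,\xi;0)\equiv 0$ for all $t>0$ and $\xi\in\RR$; since $\Phi_M$ is stationary, $v(t,\xi;\Phi_M)=\Phi_M(\xi)$, so $\Phi_M>0$ everywhere, which would give a positive stationary solution of \eqref{main-eq} and hence, via $u(t,x)=\Phi_M(x-ct)$, a positive traveling wave solution of \eqref{main-eq0}. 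This contradicts the hypothesis. Therefore $\Phi_M\equiv 0$.

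Finally, combining $0\le v(t,\xi;u_0)\le v(t,\xi;M)$ with the pointwise monotone decrease $v(t,\xi;M)\searrow 0$, I would upgrade the convergence to locally uniform in $\xi$. Since $v(t,\cdot;M)$ is continuous on $\RR$ for each $t$ (by the regularity established above), the family is monotone decreasing in $t$ with continuous limit $0$ on any compact $K\subset\RR$, so Dini's theorem gives $\sup_{x\in K}v(t,x;M)\to 0$ as $t\to\infty$. The same then holds for $v(t,x;u_0)$, proving the corollary. The only potentially delicate step is the dichotomy $\Phi_M\equiv 0$ or $\Phi_M>0$, but this follows cleanly from the strong comparison principle since $\Phi_M$ is itself a fixed point of the semiflow.
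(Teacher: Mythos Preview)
Your proof is correct and follows essentially the same route as the paper. The paper does not spell out a separate proof for the corollary, stating only that it ``follows directly from the proof of the above theorem''; your argument is precisely the natural elaboration of that remark: dominate $u_0$ by a large constant super-solution $M$, use the monotone decrease of $v(t,\cdot;M)$ to a nonnegative stationary profile $\Phi_M$, invoke the dichotomy $\Phi_M\equiv 0$ or $\Phi_M>0$ from the strong comparison principle, and use the nonexistence hypothesis to force $\Phi_M\equiv 0$. The only cosmetic difference is that the paper upgrades to locally uniform convergence via the uniform derivative bounds established in the proof of Theorem~\ref{equivalence-thm}, whereas you use Dini's theorem; both are valid here.
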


{ We will prove the following theorem about the uniqueness of traveling waves of \eqref{main-eq0} by modifying the proof of Theorem 2.1 in \cite{WangZhao}, where the authors dealt with the uniqueness of forced waves (traveling waves) for nonlocal equation \eqref{main-eq0} with nonlinearity \eqref{Reaction-Li}. Due to the different nonlinearity and tail properties of our traveling waves, their proof can not be applied directly.
\begin{theorem}[Uniqueness]
\label{uniqueness-thm}
There are at most one positive bounded solution to Equation \eqref{main-eq1}.
\end{theorem}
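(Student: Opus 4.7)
My plan is to adapt the sliding/sweeping method to the present KPP-type nonlinearity in (H2), together with the tail information from Theorem~\ref{tail-thm}. Let $\Phi_1,\Phi_2$ be two bounded positive solutions of \eqref{main-eq1}, and set
$$
\theta^* := \sup\{\theta > 0 : \theta \Phi_1(\xi) \le \Phi_2(\xi)\ \text{for all}\ \xi\in\RR\}.
$$
The goal is to show $\theta^* \ge 1$, whence $\Phi_1 \le \Phi_2$; a symmetric argument with the roles of $\Phi_1$ and $\Phi_2$ exchanged then forces $\Phi_1 \equiv \Phi_2$.

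The preparatory step is to prove that the ratios $\Phi_i/\Phi_j$ are uniformly bounded on $\RR$, so that $\theta^* \in (0,\infty)$. On any compact interval this is immediate from continuity and strict positivity. For the tails, Theorem~\ref{tail-thm} supplies the upper bounds $\Phi_i(\xi)\le M_i^+ e^{\mu_-\xi}$ at $+\infty$ and $\Phi_i(\xi)\le M_i^- e^{\mu_+\xi}$ at $-\infty$; I would complement these with matching lower bounds of the same exponential form via a translation/compactness argument. The rescaled shifts $\Phi_{i,n}(\xi) := e^{-\mu_- n}\Phi_i(\xi+n)$ are bounded positive solutions of the tail linear equation
$$
c v'(\xi) + \int_\RR k(\eta-\xi)\,v(\eta)\,d\eta - (1+q)\,v(\xi) = 0
$$
on the expanding half-lines $\xi > -n + L + L_0$. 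Extracting a subsequential limit yields a nontrivial bounded positive solution on all of $\RR$; since $g(\mu;0)$ is strictly convex with only the roots $\mu_\pm$, this limit must be a positive multiple of $e^{\mu_-\xi}$, giving $\liminf_{\xi\to\infty}\Phi_i(\xi)e^{-\mu_-\xi}>0$. A symmetric argument handles $\xi \to -\infty$.

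The sliding step then proceeds as follows. Suppose for contradiction that $\theta^* < 1$. By (H2), $u\mapsto f(\xi,u)$ is non-increasing in $u$ and strictly decreasing on $|\xi|\le L$. Inserting $\theta^*\Phi_1$ into the left-hand side of \eqref{main-eq1} and using that $\Phi_1$ is itself a solution yields
$$
c(\theta^*\Phi_1)'(\xi) + \int_\RR k(\eta-\xi)\,\theta^*\Phi_1(\eta)\,d\eta - \theta^*\Phi_1(\xi) + f(\xi,\theta^*\Phi_1)\,\theta^*\Phi_1 = \theta^*\bigl[f(\xi,\theta^*\Phi_1)-f(\xi,\Phi_1)\bigr]\Phi_1 \ge 0,
$$
with strict inequality on $|\xi|\le L$. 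Thus $\theta^*\Phi_1$ is a nontrivial strict sub-solution of \eqref{main-eq1} lying below the solution $\Phi_2$. Let $w(t,\xi)$ solve \eqref{main-eq} with $w(0,\cdot)=\theta^*\Phi_1$; by Proposition~\ref{comparison-linear-prop} the function $w(t,\cdot)$ is monotone increasing in $t$ and bounded above by $\Phi_2$, and the regularity argument from the proof of Theorem~\ref{equivalence-thm} shows that $w(t,\cdot)$ converges locally uniformly to a stationary solution $\tilde\Phi$ with $\theta^*\Phi_1 \le \tilde\Phi \le \Phi_2$ and $\tilde\Phi \not\equiv \theta^*\Phi_1$. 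Combining the strict inequality $\tilde\Phi > \theta^*\Phi_1$ with the bounded-ratio estimates from the preparatory step furnishes $\varepsilon>0$ such that $(\theta^*+\varepsilon)\Phi_1 \le \tilde\Phi \le \Phi_2$, contradicting the maximality of $\theta^*$. Hence $\theta^* \ge 1$.

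The main obstacle I anticipate is the matching lower tail bound: Theorem~\ref{tail-thm} only gives the upper estimate, and bounded positive solutions of integro-differential equations may \emph{a priori} admit decay faster than the slowest characteristic exponent. The rescaling/compactness argument sketched above requires the subsequential limit to be nontrivial, and this is the delicate point; it likely requires exploiting the uniform positive infimum of $\Phi$ on $[-L-L_0, L+L_0]$ together with a Harnack-type lower bound transported into the tail via the Dirichlet framework of Proposition~\ref{comparison_Nolocal_D-prop}. A related secondary difficulty is the passage from the pointwise strict inequality $\tilde\Phi > \theta^*\Phi_1$ to the uniform slab inequality $(\theta^*+\varepsilon)\Phi_1 \le \tilde\Phi$ on all of $\RR$, which is precisely where the matching lower tail bound is needed.
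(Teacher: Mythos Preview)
Your sliding approach is natural, but the gap you flag is real and is not closed by the compactness sketch. If $\liminf_{\xi\to\infty}\Phi_i(\xi)e^{-\mu_-\xi}=0$, your rescaled sequence $\Phi_{i,n}$ admits a subsequence converging to the trivial solution of the linear tail equation, so no positive lower bound follows; Remark~\ref{tail-rk} in the paper in fact leaves the precise tail asymptotics as an open question. Without matching lower tail bounds, neither $\theta^*>0$ nor the final uniform improvement $(\theta^*+\varepsilon)\Phi_1\le\tilde\Phi$ on all of $\RR$ can be justified, so the argument as written does not close.

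The paper circumvents this difficulty by an $\varepsilon$-regularization: rather than comparing $\sigma\Phi_2$ with $\Phi_1$, it compares with $\Phi_1-\varepsilon$, setting $\sigma_\varepsilon=\inf\{\sigma\ge 1:\sigma\Phi_2\ge\Phi_1-\varepsilon\}$. Since $\Phi_1(\pm\infty)=0$ by Theorem~\ref{tail-thm}, the function $\Phi_1-\varepsilon$ is positive only on a compact set, so $\sigma_\varepsilon<\infty$ with no lower tail information required. One then passes to $\sigma^*=\lim_{\varepsilon\to0}\sigma_\varepsilon$ and argues by contradiction that $\sigma^*=1$. If $\sigma^*>1$ and $w_\varepsilon:=\sigma_\varepsilon\Phi_2-\Phi_1+\varepsilon$ has a zero at some $\xi_\varepsilon$, plugging $\xi_\varepsilon$ into the equation and using $f\equiv -q$ outside $[-(L+L_0),L+L_0]$ forces $\xi_\varepsilon$ into that compact interval; sending $\varepsilon\to0$ and applying a strong-maximum-principle step yields $\Phi_1\equiv\sigma^*\Phi_2$, which contradicts $\sigma^*>1$ via the strict decrease of $u\mapsto f(0,u)$. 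The $\varepsilon$-shift is precisely the device you are missing, and it replaces all tail-ratio considerations by a purely compact argument.
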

\begin{proof}
Let $\Phi_i(\xi),i=1,2$ be two positive bounded solutions of Equation \eqref{main-eq0}, that is,
$\Phi_i$ satisfy that
$$
c \Phi_i'(\xi)+\int_{\RR} \kappa(\eta-\xi) \Phi_i(\eta)d\eta-\Phi_i(\xi)+f(\xi,\Phi_i) \Phi_i(\xi)=0,\quad \xi \in\RR, i=1,2.
$$
Define $\Sigma_{\epsilon}=\{\sigma \ge 1|\sigma\Phi_2 \ge \Phi_1-\epsilon\}$ for $\epsilon>0$. Note that $\Phi_1(\pm \infty)=0$ and then $\Phi_1-\epsilon$ is nonnegative only on a bounded region. Therefore there exists a large enough $\sigma$ such that $\sigma\Phi_2 \ge \Phi_1-\epsilon$, that is, $\Sigma_{\epsilon}$ is not empty. Let $\sigma_\epsilon= \inf \Sigma_\epsilon \ge 1$. Note that  $\sigma_{\epsilon}$ is non-increasing in $\epsilon$. Hence $\ds\lim_{\epsilon \to 0} \sigma_\epsilon$ exists. Let
$\sigma^*=\ds\lim_{\epsilon \to 0} \sigma_\epsilon$. We claim that $\sigma^{*}=1$. If $\sigma^{*}=1$, then we have that $\Phi_2(\xi) \ge \Phi_1(\xi)$. Repeat the previous process by interchanging $\Phi_1$ and $\Phi_2$, and then we also have that  $\Phi_1(\xi) \ge \Phi_2(\xi)$. Thus $\Phi_1 \equiv \Phi_2$.

Now it suffices to prove the claim that $\sigma^{*}=1$. Suppose to the contrary that $\sigma^*>1$. Then
 $\sigma_{\epsilon_0}>1$ for some $\epsilon_0$.  This implies that $\sigma_{\epsilon}>1$ for all $0< \epsilon \le \epsilon_0$. Let $w_\epsilon(\xi)=\sigma_\epsilon\Phi_2(\xi) - \Phi_1(\xi)+\epsilon$. Then, by the definition of $\sigma_{\epsilon}$, $w_\epsilon(\xi) \ge 0$.  There are two cases to show:
\medskip

\noindent {\bf Case 1.} There is $0<\epsilon\le \epsilon_0$ such that  $w_\epsilon (\xi) > 0$ for all $\xi$.
With $\Phi_i(\pm \infty)=0$ for $i=1,2$ we have  that $w_\epsilon (\pm \infty)=\epsilon$ and so $\ds\lim_{\xi \to \pm \infty} {w_\epsilon (\xi)}/{\Phi_2(\xi)}= \infty$. Then in this case ${w_\epsilon(\xi)}/{\Phi_2(\xi)}$ obtains a minimum at some finite $\bar \xi$. Let $\bar \sigma = {w_\epsilon (\bar \xi)}/
{\Phi_2(\bar \xi)}>0$. Then we have that  ${w_\epsilon (\xi)}/{\Phi_2(\xi)} \ge \bar \sigma$, and thus $w_\epsilon (\xi) \ge \bar \sigma \Phi_2$, that is, $(\sigma_\epsilon-\bar \sigma)\Phi_2(\xi) \ge \Phi_1(\xi)-\epsilon$. That causes the contradiction with the definition of $\sigma_\epsilon$ and $\sigma_\epsilon>1$.

\medskip

\noindent {\bf Case 2.} For any $0<\epsilon\le \epsilon_0$, there is $\xi\in \RR$ such that $w_\epsilon(\xi)=0$.
In this case, choose a sequence $0<\epsilon_n \le \epsilon_0$ for $n=1,2... $ such that $\ds\lim_{n \to \infty} \epsilon_n=0$. Let $\xi_n\in\RR$ be such that  $w_{\epsilon_n}(\xi_n)=0$. By $w_{\epsilon_n}(\xi) \ge 0$ for all $\xi\in\RR$,  $w_{\epsilon_n}(\xi)$ obtains the minimum  0 at $\xi_n$. This implies that $w_{\epsilon_n}'(\xi_n)=0$. We claim that $\{\xi_n\}\subset [-(L+L_0),L+L_0]$. Note that $w_{\epsilon_n}(\xi)$ satisfies that
$$
cw_{\epsilon_n}'(\xi)+\int_{\RR} \kappa(\eta-\xi) w(\eta)d\eta-w(\xi)+\sigma_{\epsilon_\epsilon} f(\xi,\Phi_2)\Phi_2(\xi)-f(\xi,\Phi_1)\Phi_1(\xi)=0.
$$
Suppose that  $|\xi_n| >L+L_0$ for some $n$. Then $f(\xi_n,\Phi_2(\xi_n))=f(\xi_n,\Phi_1(\xi_n))=-q$. Plugging $\xi_n$  into the above equation, we have that
\begin{align*}
0&=cw_{\epsilon_n}'(\xi_n)+\int_{\RR} \kappa(\eta-\xi_n) w(\eta)d\eta-w(\xi_n)+\sigma_{\epsilon_n} f(\xi_n,\Phi_2)\Phi_2(\xi_n)-f(\xi_n,\Phi_1)\Phi_1(\xi_n)\\
&=\int_{\RR} \kappa(\eta-\xi_n) w(\eta)d\eta-q(\sigma_{\epsilon_n}\Phi_2(\xi_n)- \Phi_1(\xi_n))\\
&=\int_{\RR} \kappa(\eta-\xi_n) w(\eta)d\eta-q(w(\xi_n)-\epsilon_n)\\
&=\int_{\RR} \kappa(\eta-\xi_n) w(\eta)d\eta+q\epsilon_n\\
&>0,
\end{align*}
which causes a contradiction.
 Hence $\{\xi_n\} \subset [-(L+L_0),L+L_0]$. This implies that there exists a subsequence of $\xi_n$ such that $\ds\lim_{n_k \to \infty} \xi_{n_k}$ exists, denoted by  $\ds\lim_{n_k \to \infty} \xi_{n_k}=\xi^*$. Moreover, as $n_k$ goes to infinity, we have that $w^*(\xi)=\sigma^* \Phi_2(\xi)-\Phi_1(\xi)$ with  $w^*(\xi^*)=0$ and ${w^*}'(\xi^*)=0$. Note that
\begin{align*}
&0=c{w^*}'(\xi)+\int_{\RR} \kappa(\eta-\xi) w^*(\eta)d\eta-w^*(\xi)+\sigma^* f(\xi,\Phi_2)\Phi_2(\xi)-f(\xi,\Phi_1)\Phi_1(\xi)\\
&\ge c{w^*}'(\xi)+\int_{\RR} \kappa(\eta-\xi) w^*(\eta)d\eta-w^*(\xi)-qw^*(\xi).
\end{align*}
Plugging $\xi=\xi^*$ into above inequality, $0 \ge \int_{\RR} \kappa(\eta-\xi^*) w^*(\eta)d\eta$, which implies that $w^*(\xi)=0$ for all $\xi$. Hence, we have that $\Phi_1(\xi)=\sigma^* \Phi_2(\xi)$ and thus
\begin{align*}
0&=c \Phi_1'(\xi)+\int_{\RR} \kappa(\eta-\xi) \Phi_1(\eta)d\eta-\Phi_1(\xi)+f(\xi,\Phi_1) \Phi_1(\xi)\\
&=\sigma^*(c \Phi_2'(\xi)+\int_{\RR} \kappa(\eta-\xi) \Phi_2(\eta)d\eta-\Phi_2(\xi)+f(\xi,\sigma^*\Phi_2) \Phi_2(\xi))\\
&=\sigma^*(c \Phi_2'(\xi)+\int_{\RR} \kappa(\eta-\xi) \Phi_2(\eta)d\eta-\Phi_2(\xi)+f(\xi,\Phi_2) \Phi_2(\xi)) \\
&\quad\quad\quad\quad+(f(\xi,\sigma^*\Phi_2)-f(\xi,\Phi_2)) \sigma^*\Phi_2(\xi)\\
&=(f(\xi,\sigma^*\Phi_2)-f(\xi,\Phi_2)) \sigma^*\Phi_2(\xi),
\end{align*}
which implies that $f(\xi,\sigma^*\Phi_2)-f(\xi,\Phi_2)=0$. In particular, $f(0,\sigma^*\Phi_2(0))-f(0,\Phi_2(0))=0$, which implies that $r(1-\sigma^*\Phi_2(0))=r(1-\Phi_2(0))$ and then $\sigma^*=1$.
\end{proof}}

\section{Spectral theory of nonlocal dispersal operators and its applications}

In this section, we study the spectral theory of the linearized equation of \eqref{main-eq} at $v=0$, i.e.,  \eqref{main-lin-eq1}, and
discuss its applications to the persistence and extinction in \eqref{main-eq}.

Letting $v(t,\xi)=e^{\lambda t} \psi(\xi)$, { \eqref{main-lin-eq1} yields}
\begin{equation}
\label{eigen-eq1}
c\psi'(\xi)+\int_{\RR} k(\eta-\xi) \psi(\eta) d\eta-\psi(\xi) +f(\xi,0)\psi(\xi)=\lambda \psi(\xi).
\end{equation}
It is obvious that $v(t,\xi)=e^{\lambda t} \psi(\xi)$ is a solution of \eqref{main-lin-eq1} if and only if $(\lambda,\psi)$ satisfies Equation \eqref{eigen-eq1}.

Let
$$
X^1=\{u\in X\,|\, u^{'}(\cdot)\in X\}
$$
and
 $$(\mathcal{L}(c) \phi)(\xi):=c\phi'(\xi)+\int_{\RR} k(\eta-\xi) \phi(\eta) d\eta-\phi(\xi) +f(\xi,0)\phi(\xi)$$
for $\phi\in X^1$. Let {$\sigma(\mathcal{L}(c) )$} be the spectrum of $\mathcal{L}(c) $ acting on $X^1$.

\begin{definition}
\label{principal-spectrum-def}
Let
$$
\lambda(c,L) =\sup\{{\rm Re}\lambda\,|\, \lambda\in \sigma(\mathcal{L}(c) )\}.
$$
$\lambda(c,L) $ is called the {\rm principal spectral point} of $\mathcal{L}(c) $ or \eqref{main-lin-eq1}. $\lambda(c,L) $ is called the {\rm principal eigenvalue} of $\mathcal{L}(c) $ if $\mathcal{L}(c) $ has an eigenfunction in $X^+\setminus\{0\}$  associated with $\lambda(c,L)$.
\end{definition}

The objective of this section is to study the properties of
$\lambda(c,L)$ and their applications to the persistence and extinction in \eqref{main-eq}. To do so, we first study in {the} next subsection  the properties of the spectrum {of} \eqref{eigen-eq1} with $f(\xi,0)$ being replaced by some periodic function.

\subsection{Existence of principal eigenvalue with periodic dependence}

In this subsection,
we shall consider the eigenvalue problem \eqref{eigen-eq1}  with   $f(\xi,0)$ being replaced by $a(\xi)$, where $a(\cdot)\in X_p$ and
$$
X_p:=\{a(\cdot) \in X| a(\cdot)=a(\cdot+p)\}
$$
 for $p>0$, that is,
\begin{equation}
\label{eigen-eq}
c\phi'(\xi)+\int_{\RR}k(\eta-\xi) \phi(\eta) d\eta-\phi(\xi) +a(\xi)\phi(\xi)=\lambda \phi(\xi),\quad  \phi\in X_p^1,
\end{equation} where $X_p^1:=\{u \in X |u,u' \in X_p \}$. {We denote $X_p^+=\{u \in X_p| u\ge 0\}$ and $Int(X_p^+)=\{u \in X_p| u > 0\}$, which is the interior of $X_p^+$.}

For given $a\in X_p$, let
 $$(\mathcal{L}(c,a;p) \phi)(\xi):=c\phi'(\xi)+\int_{\RR} k(\eta-\xi) \phi(\eta) d\eta-\phi(\xi) +a(\xi)\phi(\xi)$$
for $\phi\in X^1_p$. Let  $\sigma(\mathcal{L}(c,a;p) )$ be the spectrum of $\mathcal{L}(c,a;p) $ acting on $X_p^1$,
 and
$$
\lambda_p(c,a) =\sup\{{\rm Re}\lambda\,|\, \lambda\in \sigma(\mathcal{L}(c,a;p) )\}.
$$
$\lambda_p(c,a) $ is called the {\it principal spectral point} of $\mathcal{L}(c,a;p) $. $\lambda_p(c,a) $ is called the {\it principal eigenvalue} of $\mathcal{L}(c,a;p) $ if $\mathcal{L}(c,a;p) $ has an eigenfunction in $X_p^+\setminus\{0\}$  associated with $\lambda_p(c,a) $.
We have the following theorem.

\begin{theorem}
\label{pev-thm} Assume that $a$ is Lipschitz continuous. Then
the principal eigenvalue $\lambda_p(c,a) $ of $\mathcal{L}(c,a;p) $ always exists.
\end{theorem}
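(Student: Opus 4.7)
My approach is to exhibit $\lambda_p(c,a)$ as an eigenvalue by applying the classical Krein-Rutman theorem to the resolvent of $\mathcal{L}(c,a;p)$ on the Banach lattice $X_p \cong C(\RR/p\ZZ)$ of continuous $p$-periodic functions. First, I would reduce the nonlocal term to an integral operator on the compact torus: for $\phi \in X_p$, since $\phi$ is $p$-periodic,
\[
\int_{\RR} k(\eta-\xi)\phi(\eta)d\eta = \int_0^p K_p(\eta-\xi)\phi(\eta)d\eta, \qquad K_p(z):=\sum_{n\in\ZZ} k(z+np),
\]
where $K_p$ is well-defined and continuous by the exponential decay in (H1). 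Thus $\mathcal{K}_p \phi(\xi):= \int_0^p K_p(\eta-\xi)\phi(\eta)d\eta$ is a compact operator on $X_p$ (integral operator with continuous kernel on a compact domain).

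Next, I would treat the advection-plus-potential part $\mathcal{B}\phi := c\phi' - \phi + a(\xi)\phi$, assuming $c>0$ (the case $c=0$ is immediate since $\mathcal{L}(c,a;p)$ is then bounded). For $\mu$ real and large enough that $\mu + 1 - a(\xi)>0$ uniformly in $\xi$, the periodic first-order ODE $(\mu I - \mathcal{B})\phi = f$ can be solved explicitly: writing $g(\xi)=(\mu+1-a(\xi))/c$ and $G(\xi)=\int_0^\xi g(s)ds$, the periodicity condition $\phi(0)=\phi(p)$ uniquely determines $\phi(0)$, and one obtains an integral representation $\phi(\xi) = \int_0^p \Gamma_\mu(\xi,\eta)f(\eta)d\eta$ with a kernel $\Gamma_\mu$ continuous on $\TT_p\times\TT_p$ (the Lipschitz regularity of $a$ guarantees continuity of $\Gamma_\mu$). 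Hence $(\mu I -\mathcal{B})^{-1}$ is compact on $X_p$, and since $\mathcal{K}_p$ is bounded, for $\mu$ large the full resolvent
\[
R_\mu := (\mu I - \mathcal{L}(c,a;p))^{-1} = (\mu I-\mathcal{B})^{-1}\bigl(I-\mathcal{K}_p(\mu I-\mathcal{B})^{-1}\bigr)^{-1}
\]
exists and is compact.

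To apply Krein-Rutman I also need positivity. A direct comparison argument, essentially that of Proposition \ref{comparison-linear-prop}(1) adapted to the periodic setting, shows $\mathcal{L}(c,a;p)$ generates a positive $C_0$-semigroup $\{e^{t\mathcal{L}}\}_{t\ge 0}$ on $X_p$, and consequently $R_\mu = \int_0^\infty e^{-\mu t}e^{t\mathcal{L}}\, dt$ maps $X_p^+$ into itself for $\mu > \lambda_p(c,a)$. Then by the spectral mapping for resolvents, $r(R_\mu) = (\mu - \lambda_p(c,a))^{-1}>0$, and Krein-Rutman applied to the compact positive operator $R_\mu$ yields a nonzero $\phi \in X_p^+$ with $R_\mu \phi = r(R_\mu)\phi$, which is equivalent to $\mathcal{L}(c,a;p)\phi = \lambda_p(c,a)\phi$. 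Hence $\lambda_p(c,a)$ is a principal eigenvalue of $\mathcal{L}(c,a;p)$.

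The main technical hurdle is the compactness of $(\mu I - \mathcal{B})^{-1}$ in Step 2, since $\mathcal{B}$ contains the unbounded first-order operator $c\partial_\xi$. The argument succeeds only because we work on the compact torus $\TT_p$, where the periodic boundary condition forces the Green's function of $\mu I - c\partial_\xi - (a-1)I$ to be a bona fide continuous kernel on $\TT_p\times\TT_p$; the Lipschitz hypothesis on $a$ enters precisely to guarantee continuity of the exponential factor $e^{G(\xi)-G(\eta)}$ and hence the compactness of the resulting integral operator. Once this ingredient is secured, the rest of the proof is a textbook Krein-Rutman application on a Banach lattice, and no spectral-gap estimate or irreducibility argument is required for mere existence of the principal eigenvalue.
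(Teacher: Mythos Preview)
Your argument is essentially correct but proceeds along a genuinely different route from the paper. The paper splits $\mathcal{L}=\mathcal{K}+\mathcal{T}$ and applies the \emph{strong} Krein--Rutman theorem to the one-parameter family $\mathcal{K}(\alpha I-\mathcal{T})^{-1}$: after showing this operator is compact and strongly positive for $\alpha>\lambda_{\mathcal{T}}:=-1+\bar a$, it proves that the spectral radius $\rho(\alpha)$ is continuous, strictly decreasing, tends to $0$ as $\alpha\to\infty$, and exceeds $1$ for $\alpha$ slightly above $\lambda_{\mathcal{T}}$ (by a perturbation argument using the explicit eigenfunction $e^{(\bar a\xi-\int_0^\xi a)/c}$ of $\mathcal{T}$); the intermediate value $\hat\lambda$ with $\rho(\hat\lambda)=1$ is then shown to equal $\lambda_p(c,a)$ via the semigroup growth-bound characterization. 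Your approach, by contrast, applies Krein--Rutman directly to the full resolvent $R_\mu=(\mu I-\mathcal{L})^{-1}$, exploiting the compactness of $(\mu I-\mathcal{B})^{-1}$ on the torus (equivalently, the compact embedding $C^1(\mathbb{T}_p)\hookrightarrow C(\mathbb{T}_p)$) and the positivity of the semigroup. What the paper's approach buys is self-containment and, via strong positivity, an eigenfunction that is strictly positive everywhere; what your approach buys is brevity, since it avoids the continuation argument in $\alpha$ and the separate identification step. Two small remarks: the Lipschitz hypothesis on $a$ is not actually needed for the continuity of your Green kernel $\Gamma_\mu$ (mere continuity of $a$ suffices), and your identity $r(R_\mu)=(\mu-\lambda_p(c,a))^{-1}$ tacitly uses that $s(\mathcal{L})\in\sigma(\mathcal{L})$ for generators of positive semigroups on $C(K)$, which is standard but worth citing explicitly.
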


To prove the above theorem, we first prove some lemmas.
In the following, if no confusion occurs, we may write $\mathcal{L}(c,a;p) $ as $\mathcal{L}$.
 Let $\mathcal{K}:X_p \to X_p$ and $\mathcal{T}: X^1_p \to X_p$ be defined by
$$\mathcal{K}u(\xi)=\int_{\RR} k(\eta-\xi) \phi(\eta) d\eta$$
and
$$(\mathcal{T} \phi)(\xi):=c\phi'(\xi)-\phi(\xi) +a(\xi)\phi(\xi).$$ Then we may write $\mathcal{L}$ as $\mathcal{K}+\mathcal{T}$, and write \eqref{eigen-eq} as
 $$(\mathcal{K}+\mathcal{T})\phi=\lambda \phi.
  $$
  Note that if $(\lambda I-\mathcal{T})^{-1}$ exists, {then \eqref{eigen-eq} has nontrivial solutions $(\lambda,\phi)$ with $\phi$ in $X_p^1\setminus \{0\}$ if and only if}
\begin{equation}
\label{eigen-eq-1}
\mathcal{K} (\lambda I-\mathcal{T})^{-1} v=v
\end{equation}
has nontrivial solutions $(\lambda,v)$ with $v\in X_p\setminus \{0\}$.

For $c=0$, Theorem \ref{pev-thm} has been proved in \cite{ShZh1} (see also \cite{Cov}), that is,
$\lambda_p(0,a)$ is the principal eigenvalue of $\mathcal{L}(0,a;p)$ with a periodic principal eigenfunction. In the rest of the subsection, we assume that $c>0$.

{Let
\begin{equation}
\lambda_{\mathcal{T}}=-1+\overline{a},\quad  \text{where } \bar{a}=\frac{\int_0^{p}a(s)ds}{p}.
\end{equation}}

\begin{lemma}
\label{eigen-prop}
{Assume that $c>0$.}
\begin{itemize}
\item[(1)] If $\alpha \in \CC$ and $Re \alpha > \lambda_{\mathcal{T}}$, then $(\alpha I-\mathcal{T})^{-1}$ exists.

\item[(2)] If $\alpha \in\RR$ and  $\alpha > \lambda_{\mathcal{T}}$,
then $\mathcal{K} (\alpha I-\mathcal{T})^{-1}$ is a compact operator on $X_p$ and is
strongly positive, { i.e.,  $\mathcal{K} (\alpha I-\mathcal{T})^{-1} u \in Int(X_p^+) $ if $u \in X_p^+\setminus\{0\}$}.
\end{itemize}
\end{lemma}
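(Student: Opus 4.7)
The plan is to handle both parts through the explicit solution of the first-order ODE $(\alpha I - \mathcal{T})\phi = v$, which reads
\begin{equation*}
c\phi'(\xi) = (\alpha + 1 - a(\xi))\phi(\xi) - v(\xi).
\end{equation*}
For part (1), I would introduce $F(\xi) := c^{-1}\int_0^\xi (\alpha + 1 - a(s))\,ds$; the integrating-factor method gives the one-parameter family
\begin{equation*}
\phi(\xi) = e^{F(\xi)}\Bigl[\, C - c^{-1}\int_0^\xi e^{-F(s)} v(s)\,ds \,\Bigr].
\end{equation*}
The $p$-periodicity of $a$ yields $F(\xi+p) = F(\xi) + \kappa$ with $\kappa := p(\alpha - \lambda_{\mathcal{T}})/c$, so that enforcing $\phi(\xi+p) = \phi(\xi)$ and using the periodicity of $v$ collapses, after the $\xi$-dependent pieces cancel, to a single scalar equation $(e^\kappa - 1)C = e^\kappa c^{-1}\int_0^p e^{-F(s)} v(s)\,ds$. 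This is uniquely solvable precisely when $e^\kappa \neq 1$, and $\operatorname{Re}\alpha > \lambda_{\mathcal{T}}$ gives $\operatorname{Re}\kappa > 0$, hence $|e^\kappa| > 1$. The resulting $\phi$ lies in $X_p^1$ by inspection of the ODE (both sides are continuous), so $(\alpha I - \mathcal{T})^{-1}$ exists as a bounded operator.

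For the compactness assertion in part (2), I would first prove $\mathcal{K}: X_p \to X_p$ is itself compact. Differentiation under the integral sign, which is justified because $k' \in L^1(\RR)$ by (H1), yields $(\mathcal{K}u)'(\xi) = -\int_{\RR} k'(\eta - \xi) u(\eta)\,d\eta$ together with the uniform bound $\|(\mathcal{K}u)'\|_\infty \leq \|k'\|_{L^1}\|u\|_\infty$. Identifying $X_p$ with $C(\RR/p\ZZ)$, this shows $\mathcal{K}$ sends bounded subsets into uniformly bounded, uniformly equicontinuous subsets on the compact torus, so Arzel\`a--Ascoli delivers precompactness of $\mathcal{K}$ applied to the unit ball. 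Composing with the bounded resolvent from part (1) gives compactness of $\mathcal{K}(\alpha I - \mathcal{T})^{-1}$.

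For strong positivity I would repackage the resolvent formula. With $\alpha > \lambda_{\mathcal{T}}$ real, $\kappa$ and $F$ are real, and substituting the explicit $C$ back and splitting $\int_0^p = \int_0^\xi + \int_\xi^p$ yields the Green's-function representation
\begin{equation*}
\phi(\xi) = \frac{e^{-\kappa}}{c(1-e^{-\kappa})}\int_0^\xi e^{F(\xi)-F(s)} v(s)\,ds + \frac{1}{c(1-e^{-\kappa})}\int_\xi^p e^{F(\xi)-F(s)} v(s)\,ds.
\end{equation*}
All prefactors and exponential kernels are strictly positive, so any $v \in X_p^+ \setminus \{0\}$ forces $\phi(\xi) > 0$ pointwise; continuity and $p$-periodicity then give $\phi \geq \phi_{\min} > 0$, and the normalization $\int_{\RR} k = 1$ yields $\mathcal{K}\phi(\xi) \geq \phi_{\min}$ uniformly in $\xi$, placing $\mathcal{K}(\alpha I - \mathcal{T})^{-1} v$ in $Int(X_p^+)$.

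The main obstacle is the periodicity bookkeeping needed to produce $C$ cleanly and to rewrite the solution in the Green's-function form above; once this is in hand, the compactness and positivity conclusions follow from standard arguments. A minor point worth checking is that the complex-valued $F$ in part (1) still gives bounded exponential weights on $[0,p]$, so the operator-norm bound on the resolvent is uniform.
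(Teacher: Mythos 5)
Your proof is correct, and it reaches the resolvent by a slightly different route than the paper. For part (1) the paper also uses the integrating factor for $c\phi'-( \alpha+1-a)\phi=-w$, but it pins down the solution by integrating the exact equation over $[\xi,\infty)$, using $\operatorname{Re}\alpha>\lambda_{\mathcal T}$ to make the improper integral converge; this automatically selects the unique bounded solution, and periodicity and the bound $\|\phi\|_\infty\le C\|w\|_\infty$ are then verified a posteriori. You instead close the problem on a single period by imposing $\phi(\xi+p)=\phi(\xi)$, which reduces to the invertible scalar condition $e^{\kappa}\neq 1$ with $\kappa=p(\alpha-\lambda_{\mathcal T})/c$; unwinding this gives a periodic Green's-function kernel that is exactly the paper's half-line integral resummed as a geometric series. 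Your representation buys a more transparent argument for part (2): strict positivity of the Green's kernel immediately gives $\phi\ge\phi_{\min}>0$ and hence strong positivity of $\mathcal K(\alpha I-\mathcal T)^{-1}$, and your Arzel\`a--Ascoli argument for compactness of $\mathcal K$ (using $k'\in L^1$ from (H1) and periodicity of $\mathcal K u$) supplies a detail the paper states without proof. The two points you flag as needing checking are both routine: $|e^{F(\xi)-F(s)}|=e^{\operatorname{Re}F(\xi)-\operatorname{Re}F(s)}$ is bounded on $[0,p]^2$ and $|1-e^{-\kappa}|\ge 1-e^{-\operatorname{Re}\kappa}>0$, so the resolvent bound holds for complex $\alpha$ as well.
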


\begin{proof}
(1) For given $w \in X_p$, consider $(\alpha I - \mathcal{T})\phi=w$, i.e.
\begin{equation}
\label{ode-eq1}
\phi'(\xi)-\frac{1}{c}[\alpha +1-a(\xi)]\phi(\xi)=-\frac{w(\xi)}{c}.
\end{equation}
{If a solution $\phi$ in $X_p^1$  exists, then we must have that:
\begin{equation*}
\frac{d}{d\xi}[e^{\frac{1}{c}\int_{\xi}^{0} (\alpha+1-a(\eta))d\eta}\phi(\xi)]=-e^{\frac{1}{c}\int_{\xi}^{0} (\alpha+1-a(\eta))d\eta}\frac{w(\xi)}{c}.
\end{equation*}
Therefore,  we integrate both sides over $[\xi,\infty)$ and exploiting the fact that $\textrm{Re}(\alpha)>-1+{\bar a}$ and that $\phi$ must belong to $X_p^1$, we can
simplify the result to get that
\begin{equation}
\label{new-aux-eq}
\phi(\xi)= \frac{1}{c}\int_{\xi}^{\infty} e^{\frac{1}{c}\int_{\zeta}^{\xi} (\alpha+1-a(\eta))d\eta}w(\zeta)d\zeta.
\end{equation}
For each $\zeta$ in $[\xi, \infty)$, let $k$ be the unique non-negative integer such that $\zeta \in [\xi+kp,\xi+(k+1)p)$. Then
\begin{eqnarray*}
&&|\int_{\zeta}^{\xi} (\bar a-a(\eta))d\eta |= |\int_{\xi}^{\zeta} (\bar a-a(\eta))d\eta | = |\int_{\xi+kp}^{\zeta} (\bar a-a(\eta))d\eta|  \\
&\le& \int_{\xi+kp}^{\zeta} |\bar a-a(\eta)|d\eta \le \int_{0}^{p} |\bar a-a(\eta)|d\eta \le (|\bar a|+ \max_{\xi \in [0,p]}|a(\xi)|)p,
\end{eqnarray*}
and therefore we have that
\begin{align*}
\phi(\xi)&= \frac{1}{c}\int_{\xi}^{\infty} e^{\frac{1}{c}\int_{\zeta}^{\xi} (\alpha+1-a(\eta))d\eta}w(\zeta)d\zeta\\
&=\frac{1}{c}\int_{\xi}^{\infty} e^{\frac{1}{c}\int_{\zeta}^{\xi} (\alpha+1-\bar a)d\eta} e^{\frac{1}{c}\int_{\zeta}^{\xi} (\bar a-a(\eta))d\eta}w(\zeta)d\zeta\\
&\le \Big(e^{\frac{1}{c} \big(|\bar a|+\ds \max_{\xi \in [0,p]}|a(\xi)|\big)p}\|w\|_\infty\Big) \frac{1}{c}\int_{\xi}^{\infty} e^{\frac{1}{c}\int_{\zeta}^{\xi} (\alpha+1-\bar a)d\eta} d\zeta \\
&= \frac{1}{\alpha+1-\bar a} \Big(e^{\frac{1}{c} \big(|\bar a|+\ds \max_{\xi \in [0,p]}|a(\xi)|\big)p}\Big) \|w\|_\infty.
\end{align*}
Then with $C:=\frac{1}{\alpha+1-\bar a} \big(e^{\frac{1}{c} \big(|\bar a|+\ds \max_{\xi \in [0,p]}|a(\xi)|\big)p}\big)$,  we have just shown that $\|\phi\|_\infty \le C \|w\|_\infty$.

Moreover, letting $\hat{\zeta}=\zeta-p$, we have that
\begin{align*}
\phi(\xi+p)&= \frac{1}{c}\int_{\xi+p}^{\infty} e^{\frac{1}{c}\int_{\zeta}^{\xi+p} (\alpha+1-a(\eta))d\eta}w(\zeta)d\zeta\\
&=\frac{1}{c}\int_{\xi}^{\infty} e^{\frac{1}{c}\int_{\hat{\zeta}+p}^{\xi+p} (\alpha+1-a(\eta))d\eta}w(\hat{\zeta}+p)d(\hat{\zeta}+p)\\
&=\frac{1}{c}\int_{\xi}^{\infty} e^{\frac{1}{c}\int_{\hat{\zeta}}^{\xi} (\alpha+1-a(\eta))d\eta}w(\hat{\zeta})d\hat{\zeta}\\
&=\phi(\xi).
\end{align*}

These arguments establish the existence of a solution $\phi$ in $X_p^1$ to equation $(\ref{ode-eq1})$, for each $w$ in $X_p$. Uniqueness of $\phi$ follows from standard results in the theory of linear ODEs. This concludes the proof of part (1) of this Lemma.}


\smallskip

(2) {This} follows from the compactness and the positivity of $\mathcal{K}$ and the strong positivity of  $(\alpha I-\mathcal{T})^{-1}$.
\end{proof}

\begin{lemma}
\label{eigen-prop0} $\quad$
Assume that $c>0$. Then $\lambda_{\mathcal{T}}$ is an eigenvalue of $\mathcal{T}$ and its associated eigenfunction is $\phi(\xi)=e^{\frac{1}{c}\big(\bar{a} \xi-\int_0^{\xi}a(s)ds \big)}$.
\end{lemma}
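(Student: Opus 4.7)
The proof is essentially a direct verification together with a periodicity check, so my plan is short. The approach is to differentiate the candidate eigenfunction, substitute it into $\mathcal{T}\phi$, and confirm that the resulting expression matches $\lambda_{\mathcal{T}}\phi$; after that, to check that $\phi\in X_p^1$.

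First, I would compute $\phi'(\xi)$ for $\phi(\xi)=e^{\frac{1}{c}(\bar{a}\xi-\int_0^{\xi}a(s)ds)}$. By the chain rule and the fundamental theorem of calculus,
\begin{equation*}
\phi'(\xi)=\tfrac{1}{c}(\bar{a}-a(\xi))\,\phi(\xi).
\end{equation*}
Substituting into the definition of $\mathcal{T}$ gives
\begin{equation*}
(\mathcal{T}\phi)(\xi)=c\phi'(\xi)-\phi(\xi)+a(\xi)\phi(\xi)=(\bar{a}-a(\xi))\phi(\xi)-\phi(\xi)+a(\xi)\phi(\xi)=(-1+\bar{a})\phi(\xi),
\end{equation*}
which is exactly $\lambda_{\mathcal{T}}\phi(\xi)$. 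This shows $\mathcal{T}\phi=\lambda_{\mathcal{T}}\phi$.

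Second, I would verify $\phi\in X_p^1$. Regularity is immediate: since $a$ is Lipschitz, it is continuous, hence $\phi$ is $C^1$ and both $\phi$ and $\phi'$ are bounded and uniformly continuous on any set where the exponent is controlled. The only nontrivial point is $p$-periodicity, and this is where the periodicity of $a$ enters. I would compute
\begin{equation*}
\phi(\xi+p)=\exp\!\Bigl(\tfrac{1}{c}\bigl(\bar{a}(\xi+p)-\int_0^{\xi+p}a(s)ds\bigr)\Bigr)
\end{equation*}
and split the integral as $\int_0^{\xi}a(s)ds+\int_{\xi}^{\xi+p}a(s)ds$. Using the $p$-periodicity of $a$, the translated integral $\int_{\xi}^{\xi+p}a(s)ds$ equals $\int_0^p a(s)ds=\bar{a}p$, which cancels the $\bar{a}p$ contribution from the first term inside the exponent. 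Hence $\phi(\xi+p)=\phi(\xi)$, so $\phi\in X_p^1$. Combined with the eigenrelation above and the obvious positivity $\phi(\xi)>0$, this yields that $\lambda_{\mathcal{T}}$ is an eigenvalue of $\mathcal{T}$ on $X_p^1$ with eigenfunction $\phi$.

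There is no real obstacle here; the only point that requires some care is recognizing that the exponent $\frac{1}{c}(\bar{a}\xi-\int_0^\xi a(s)ds)$ is constructed precisely so that the linear growth $\bar{a}\xi$ cancels the mean-value part of $\int_0^\xi a(s)ds$, leaving a bounded $p$-periodic function in the exponent. This is exactly the standard trick for solving a scalar linear ODE with periodic coefficients and finding a periodic solution when the forcing-term average is tuned to $\lambda_{\mathcal{T}}=-1+\bar{a}$.
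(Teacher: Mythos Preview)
Your proposal is correct and is precisely the direct computation the paper has in mind; the paper's proof simply reads ``This can be verified by direct computation.'' Your explicit check of $p$-periodicity of $\phi$ is a welcome addition that the paper leaves implicit.
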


\begin{proof}
{This can be verified by direct computation.}
\end{proof}

\begin{lemma}
\label{eigen-prop1} $\quad$ Assume that $\alpha> \lambda_{\mathcal{T}}$ and {$c>0$}.
Let $\rho(\alpha)$ be the spectral radius of $\mathcal{K} (\alpha I-\mathcal{T})^{-1}$.
\begin{itemize}
\item[(1)]$\rho(\alpha_1)> \rho(\alpha_2)$ if $ \alpha_2 > \alpha_1$.
\item[(2)]$\rho(\alpha) \to 0$ as $\alpha \to \infty$.
\item[(3)] $\rho(\alpha)$ is continuous in $\alpha> \lambda_{\mathcal{T}}$.
\item[(4)] For some $\ds\alpha_0 >\lambda_{\mathcal{T}}$, if $\rho(\alpha_0)>1$ then there exists a $\lambda$ such that $\rho(\lambda)=1$.
\end{itemize}
\end{lemma}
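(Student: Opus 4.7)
The guiding idea is to exploit the explicit formula derived in Lemma \ref{eigen-prop}(1), namely
$$
((\alpha I-\mathcal{T})^{-1}w)(\xi)=\frac{1}{c}\int_{\xi}^{\infty}e^{\frac{1}{c}\int_{\zeta}^{\xi}(\alpha+1-a(\eta))d\eta}w(\zeta)d\zeta,
$$
together with the fact that $\mathcal{K}(\alpha I-\mathcal{T})^{-1}$ is compact and strongly positive on $X_p$ (Lemma \ref{eigen-prop}(2)). By the Krein--Rutman theorem applied to this operator, $\rho(\alpha)$ is a simple isolated eigenvalue with a strictly positive eigenfunction $\phi_\alpha\in\mathrm{Int}(X_p^+)$. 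Throughout the argument, I will work with $\alpha>\lambda_{\mathcal{T}}$ so that the resolvent formula and Krein--Rutman apply.

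For part (1), I will first observe that for $\alpha_1<\alpha_2$ and $w\in X_p^+\setminus\{0\}$, the factorisation
$$
e^{\frac{1}{c}\int_{\zeta}^{\xi}(\alpha_1+1-a(\eta))d\eta}=e^{\frac{1}{c}(\alpha_1-\alpha_2)(\xi-\zeta)}\,e^{\frac{1}{c}\int_{\zeta}^{\xi}(\alpha_2+1-a(\eta))d\eta}
$$
shows that on the range of integration ($\zeta>\xi$) the prefactor is strictly greater than $1$, hence $(\alpha_1 I-\mathcal{T})^{-1}w>(\alpha_2 I-\mathcal{T})^{-1}w$ pointwise, and strong positivity of $\mathcal{K}$ yields $\mathcal{K}(\alpha_1 I-\mathcal{T})^{-1}w\gg\mathcal{K}(\alpha_2 I-\mathcal{T})^{-1}w$. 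Testing against the positive eigenfunction $\phi_{\alpha_2}$ gives $\mathcal{K}(\alpha_1 I-\mathcal{T})^{-1}\phi_{\alpha_2}>\rho(\alpha_2)\phi_{\alpha_2}$, and the standard Krein--Rutman comparison lemma (if $Av>\mu v$ for some $v\in\mathrm{Int}(X_p^+)$ with $A$ strongly positive and compact, then $\rho(A)>\mu$) produces the strict inequality $\rho(\alpha_1)>\rho(\alpha_2)$.

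For part (2), the norm estimate already obtained in the proof of Lemma \ref{eigen-prop}(1),
$$
\|(\alpha I-\mathcal{T})^{-1}\|\le \frac{1}{\alpha+1-\bar a}\exp\!\Big(\tfrac{1}{c}(|\bar a|+\max_{[0,p]}|a|)p\Big),
$$
combined with boundedness of $\mathcal{K}$, shows $\|\mathcal{K}(\alpha I-\mathcal{T})^{-1}\|\to 0$ as $\alpha\to\infty$, and since $\rho(\alpha)\le \|\mathcal{K}(\alpha I-\mathcal{T})^{-1}\|$, part (2) follows. For part (3), the resolvent identity gives continuity of $\alpha\mapsto(\alpha I-\mathcal{T})^{-1}$ in operator norm on $\{\mathrm{Re}\,\alpha>\lambda_{\mathcal{T}}\}$, hence continuity of $\alpha\mapsto \mathcal{K}(\alpha I-\mathcal{T})^{-1}$; since $\rho(\alpha)$ is a simple isolated eigenvalue of a compact operator, it depends continuously on the operator (this is the usual perturbation statement for isolated simple eigenvalues). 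Part (4) then follows immediately from parts (1)--(3): by (2) pick $\alpha_1>\alpha_0$ with $\rho(\alpha_1)<1$, and by (1) and (3) the continuous strictly decreasing function $\rho$ takes the value $1$ at some $\lambda\in(\alpha_0,\alpha_1)$.

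I expect the main technical point to be part (1), specifically passing from the strict pointwise inequality of the resolvents to the strict inequality of spectral radii. The cleanest route is the Krein--Rutman comparison lemma invoked above; if one prefers to avoid citing it, an alternative is to use the dual positive eigenfunction of $\mathcal{K}(\alpha_2 I-\mathcal{T})^{-1}$ and pair it against $\mathcal{K}(\alpha_1 I-\mathcal{T})^{-1}\phi_{\alpha_2}-\rho(\alpha_2)\phi_{\alpha_2}$ to extract the strict gap. The other parts reduce to bookkeeping with the resolvent formula.
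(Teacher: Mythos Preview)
Your proposal is correct and follows essentially the same approach as the paper: the explicit resolvent formula from Lemma~\ref{eigen-prop}(1) together with Krein--Rutman comparison for part~(1), the operator-norm bound $\rho(\alpha)\le\|\mathcal{K}\|\,\|(\alpha I-\mathcal{T})^{-1}\|$ for part~(2), and the intermediate value argument for part~(4). The only minor difference is in part~(3): the paper simply cites an external result (Lemma~2 in B\"urger~\cite{Burger}), whereas you sketch the continuity directly via the resolvent identity and perturbation of simple isolated eigenvalues; both are standard and equivalent here.
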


\begin{proof}
(1) By Lemma \ref{eigen-prop},  $\mathcal{K} (\alpha I-\mathcal{T})^{-1}$ is a { strongly} positive and compact operator and then by Theorem 19.3 (Krein-Rutman theorem) in \cite{Deimling}, $\rho(\alpha)$ is its principal eigenvalue with a positive eigenvector.
Note that for given $w\in X_p$,
$$(\alpha I-\mathcal{T})^{-1} w=\frac{1}{c}\int_{\xi}^{\infty} e^{\frac{1}{c}\int_{\zeta}^{\xi} (\alpha+1-a(\eta))d\eta}w(\zeta)d\zeta.
$$
Hence for { $w\in X_p^+ \setminus \{0\}$},
$$
(\alpha_1 I-\mathcal{T})^{-1} w>(\alpha_2 I-\mathcal{T})^{-1} w
$$
and then
$$
\mathcal{K}(\alpha_1 I-\mathcal{T})^{-1} w>\mathcal{K}(\alpha_2 I-\mathcal{T})^{-1} w.
$$
{By Theorem 19.3 (d) in \cite{Deimling}}, $\rho(\alpha_1)>\rho(\alpha_2)$.

(2) {From the arguments of Lemma \ref{eigen-prop} (1), we have that $\ds\lim_{\alpha \to \infty}\|(\alpha I-\mathcal{T})^{-1} w\|=0$ for any $w\in X_p$. The assertion holds since $\rho(\alpha)\leq \|\mathcal{K}\|\|(\alpha I-\mathcal{T})^{-1}\|$.}

(3) {This} follows from Lemma 2 in \cite{Burger}.

(4) {This} follows from (1), (2), and (3).
\end{proof}

\begin{lemma}
\label{eigen-prop2}
 {Assume that $c>0$.} Let $V(t;c,a)$ be the solution operator of \eqref{main-lin-eq1} with $f(\xi,0)$ being replaced by $a(\xi)$, that is, $v(t,\cdot;v_0)=V(t;c,a)v_0$ is the solution of \eqref{main-lin-eq1} with $f(\xi,0)$ being replaced by $a(\xi)$ and
$v(0,\cdot;v_0)=v_0(\cdot)\in {X_p}$. Then
$$
\lambda_p(c,a)=\limsup_{t\to\infty}\frac{\ln \|V(t;c,a)\|}{t}.
$$
\end{lemma}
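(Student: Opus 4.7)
The plan is to establish the equality by showing two matching inequalities, exploiting the existence (from Theorem \ref{pev-thm}) of a positive periodic principal eigenfunction $\phi^{*}\in X_p^1$ satisfying $\mathcal{L}(c,a;p)\phi^{*}=\lambda_p(c,a)\phi^{*}$ with $\phi^{*}\in \mathrm{Int}(X_p^+)$. Since $\phi^{*}$ is periodic and continuous, its infimum and supremum over $[0,p]$ give positive constants $0<m\le M<\infty$ such that $m\le \phi^{*}(\xi)\le M$ for all $\xi\in\RR$.

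For the lower bound, I would note that the periodicity of $a$ and the uniqueness of solutions imply that $V(t;c,a)$ maps $X_p$ into $X_p$, and by direct integration the constant-in-time function $e^{\lambda_p(c,a)t}\phi^{*}(\xi)$ solves \eqref{main-lin-eq1} with initial datum $\phi^{*}$. Therefore $V(t;c,a)\phi^{*}=e^{\lambda_p(c,a)t}\phi^{*}$, which yields
\[
\|V(t;c,a)\|\ge \frac{\|V(t;c,a)\phi^{*}\|}{\|\phi^{*}\|}=e^{\lambda_p(c,a)t},
\]
so $\limsup_{t\to\infty}\ln\|V(t;c,a)\|/t\ge \lambda_p(c,a)$.

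For the upper bound I would use positivity of the semigroup on $X_p$. Given $v_0\in X_p$, we have the pointwise bound $-\frac{\|v_0\|}{m}\phi^{*}(\xi)\le v_0(\xi)\le \frac{\|v_0\|}{m}\phi^{*}(\xi)$. Applying the comparison principle (Proposition \ref{comparison-linear-prop}) to the linear equation \eqref{main-lin-eq1} with coefficient $a(\xi)$ and invariance of $X_p$ under the flow, one gets
\[
|V(t;c,a)v_0(\xi)|\le \frac{\|v_0\|}{m}\,V(t;c,a)\phi^{*}(\xi)=\frac{\|v_0\|}{m}e^{\lambda_p(c,a)t}\phi^{*}(\xi)\le \frac{M}{m}\|v_0\|\,e^{\lambda_p(c,a)t}.
\]
Hence $\|V(t;c,a)\|\le (M/m)e^{\lambda_p(c,a)t}$, giving the reverse inequality $\limsup_{t\to\infty}\ln\|V(t;c,a)\|/t\le \lambda_p(c,a)$.

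The main obstacles are two preliminary facts I am using tacitly and would need to verify. First, I must justify that $V(t;c,a)$ restricts to a well-defined $C_0$-semigroup on $X_p$; this follows from uniqueness of mild solutions together with the observation that $v_0(\cdot+p)$ and $v_0(\cdot)$ generate the same equation because $a$ and $k$ are $p$-translation invariant. Second, I need the strong form of positivity on $X_p$ asserting that if $v_0\ge 0$ in $X_p$ then $V(t;c,a)v_0\ge 0$, which is a direct consequence of Proposition \ref{comparison-linear-prop}(1) applied with $\underbar{v}\equiv 0$ and $\overline{v}=V(t;c,a)v_0$, noting that boundedness of $v_0\in X_p$ provides the required uniform lower bound on the difference. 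Once these two points are in hand, the sandwich estimate above completes the proof.
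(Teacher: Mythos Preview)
Your argument is clean but circular in the paper's logical structure. You invoke Theorem \ref{pev-thm} to obtain a positive eigenfunction $\phi^{*}$ associated to $\lambda_p(c,a)$, yet the proof of Theorem \ref{pev-thm} itself relies on Lemma \ref{eigen-prop2}: there one first constructs some $\hat\lambda>\lambda_{\mathcal{T}}$ together with a positive eigenfunction $\tilde\phi$, then uses exactly the sandwich estimate you wrote (comparison against $\sigma_1\tilde\phi$ and $\sigma_2\tilde\phi$) to identify $\hat\lambda$ with $\limsup_{t\to\infty}\ln\|V(t;c,a)\|/t$, and finally appeals to Lemma \ref{eigen-prop2} to conclude $\hat\lambda=\lambda_p(c,a)$. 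So the existence of a positive eigenfunction \emph{at the value} $\lambda_p(c,a)$ is not available when proving this lemma.

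The paper's proof avoids eigenfunctions entirely and works directly from the definition of $\lambda_p(c,a)$ as the spectral bound. For $\lambda_p(c,a)\le\limsup$, one takes any $\lambda$ above the growth bound, uses $\|V(t;c,a)\|\le Me^{\lambda t}$ to build the resolvent $(\mathcal{L}-(\lambda+\epsilon)I)^{-1}$ explicitly as $\int_0^\infty e^{-(\lambda+\epsilon)t}V(t;c,a)\,dt$, so $(\lambda,\infty)$ lies in the resolvent set and hence $\lambda_p(c,a)\le\lambda$. For the reverse inequality, one sets $\bar\lambda=\lambda_p(c,a)+\epsilon$, solves the stationary equation $(\mathcal{L}-\bar\lambda)v_M+M=0$ to produce a bounded positive super-solution $v_M\ge 1$, and compares $V(t;c,a)\cdot 1$ against $e^{\bar\lambda t}v_M$. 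Your sandwich idea is correct in spirit and reappears later, but as a stand-alone proof of this lemma it begs the question.
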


\begin{proof}
{This follows from similar arguments as those in} \cite[Proposition 2.5]{HuShVi}. {For completeness,
we provide a sketch of} the proof in the following.
\smallskip

First, let $\ds\lambda_L=\limsup_{t\to\infty}\frac{\ln \|V(t;c,a)\|}{t}$. For any given $\lambda>\ds\limsup_{t\to\infty}\frac{\ln \|V(t;c,a)\|}{t}$,
there is $M>0$ such that
$$
\|V(t;c,a)\|\le M e^{\lambda t} \quad \forall\, \, t\ge 0.
$$
Then for any $\epsilon>0$, we have that
\begin{equation}
\label{new-add-estimate}
\|e^{(-\lambda-\epsilon)t}V(t;c,a)\|\le M e^{-\epsilon t}\quad \forall\,\, t\ge 0.
\end{equation}
Let $\tilde v=e^{(-\lambda-\epsilon)t}v$. Then $\tilde v$ satisfies
\begin{equation}
\label{new-add-eq1}
\tilde v_t=c\tilde v_x+\int_{\RR}k(y-x)\tilde v(t,y)dy-\tilde v(t,x)+a(x)\tilde v(t,x)-(\lambda+\epsilon)\tilde v.
\end{equation}

Let $\tilde V(t;c,a)$ be the solution operator of \eqref{new-add-eq1}. For { any $w\in X^1$, let
\begin{equation}
\label{integral_sol}
\tilde v(t,x)=\int_{-\infty}^t \tilde V(t-\tau;c,a)w(\cdot)d\tau.
\end{equation}
Then, by Leibniz integral rule : for $ -\infty <a(x),b(x)<\infty$,
$$ {\frac {d}{dx}}\left(\int _{a(x)}^{b(x)}f(x,t)\,dt\right)=f{\big (}x,b(x){\big )}\cdot {\frac {d}{dx}}b(x)-f{\big (}x,a(x){\big )}\cdot {\frac {d}{dx}}a(x)+\int _{a(x)}^{b(x)}{\frac {\partial }{\partial x}}f(x,t)\,dt,$$ we have that
\begin{align*}
\frac{\partial \tilde v(t,x)}{\partial t}&=w(x)+\int_{-\infty}^t \frac {\partial }{\partial t} \tilde V(t-\tau;c,a)w(\cdot)d\tau\\
&=c\tilde v_x+\int_{\RR}k(y-x)\tilde v(t,y)dy-\tilde v(t,x)+a(x)\tilde v(t,x)-(\lambda+\epsilon)\tilde v+w(x).
\end{align*}
Thus it is a solution of
\begin{equation}
\label{new-add-eq2}
\tilde v_t=c\tilde v_x+\int_{\RR}k(y-x)\tilde v(t,y)dy-\tilde v(t,x)+a(x)\tilde v(t,x)-(\lambda+\epsilon)\tilde v+w(x)
\end{equation}
on $t\in\RR$. 
Letting $t \to \infty$ in \eqref{integral_sol},
$$
\lim_{t \to \infty}\tilde v(t,x)={ \int_{0}^{\infty}} \tilde V(\tau;c,a)w(\cdot)d\tau=:\tilde v(x;w).
$$

 Moreover,
$$
\|\tilde v(\cdot;w)\|\le \frac{M}{\epsilon}\|w\|.
$$
Suppose that $\tilde v_1(x;w)$ and $\tilde v_2(x;w)$ are two stationary solutions of \eqref{new-add-eq2} and then  $\tilde v_1(x;w)-\tilde v_2(x;w)$ is a stationary solution of Equation \eqref{new-add-eq1}. The estimate \eqref{new-add-estimate} implies that  Equation \eqref{new-add-eq1} has only trivial  stationary solution and so $\tilde v_1(x;w)=\tilde v_2(x;w)$, that is, $\tilde v(x;w)$ is the unique stationary solution of \eqref{new-add-eq2}. This implies that $(\mathcal{K}+\mathcal{T}-(\lambda+\epsilon)I)^{-1}$ exists for any $\epsilon>0$ and so  $(\lambda , \infty)$ is in the resolvent of  the operator $\mathcal{K}+\mathcal{T}$. While $\lambda_p(c,a)=\sup\{{\rm Re}\lambda\,|\, \lambda\in \sigma(\mathcal{K}+\mathcal{T})\}$, we have that $\lambda+\epsilon>\lambda_p(c,a)$.}
Hence, as $\epsilon>0$ was arbitrary,
$$
\lambda_p(c,a)\le \limsup_{t\to\infty}\frac{\ln \|V(t;c,a)\|}{t}.
$$

Next, for any {  $\epsilon>0$ and $M>0$,  let $\bar \lambda=\lambda_p(c,a)+\epsilon$  and  $v_M(x)$ be the unique solution of
\begin{equation}
\label{new-add-eq3}
cv_x+\int_{\RR}k(y-x)v(y)dy-v(x)+a(x)v-\bar \lambda v(x)+M=0,\quad x\in\RR.
\end{equation}

Then with $\tilde{\lambda}=\frac{1+q+|\bar \lambda|}{c}>0$ and  $\tilde{f}(x,v)= \frac{1}{c}(\int_{\RR}k(y-x)v(y)dy+(q+a(x))v(x)+(|\bar  \lambda|-\bar \lambda) v(x)+M) \ge \frac{M}{c}$, Equation \eqref{new-add-eq3} is equivalent to that:
$$
cv_x+\int_{\RR}k(y-x)v(y)dy-v(x)-qv(x)-|\bar \lambda| v(x)+(q+a(x))v+(|\bar \lambda|-\bar \lambda) v(x)+M=0,\quad x\in\RR,
$$
and thus
\begin{equation}
\label{new-add-eq3_0}
\tilde{\lambda} v- v'=\tilde{f}(x,v),\quad x\in\RR.
\end{equation}
Then multiply \eqref{new-add-eq3_0} by $e^{-\tilde{\lambda} x}$ to get that $[-e^{-\tilde{\lambda} x}v(x)]'=e^{-\tilde{\lambda} x}\tilde{f}(x,v(x))$, and integrate both sides over $[x,\infty)$ to
 $$
 v(x)=e^{\tilde{\lambda} x}\int_x^\infty(e^{-\tilde{\lambda} s}\tilde{f}(s,v(s)))ds\ge e^{\tilde{\lambda} x}\int_x^\infty(e^{-\tilde{\lambda} s}\frac{M}{c}) ds=\frac{M}{c\tilde{\lambda}} ,\quad x\in\RR.
$$
Choose $M \ge |\bar \lambda|+1+q$ and then we have that
$v_M(x) \ge 1$.

Note that $v_M(x)$ is a super-solution of \eqref{new-add-eq1}. {By the comparison principle  (Proposition \ref{comparison-linear-prop})} for  \eqref{new-add-eq1}, we have that
$$
0<e^{-\bar \lambda t}V(t;c,a)\cdot 1\le v_M(x),\quad \forall t\ge 0,\,\, x\in\RR.
$$
This implies that
$$
\limsup_{t\to\infty}\frac{\ln\|V(t;c,a)\|}{t}\le\limsup_{t\to\infty}\frac{\ln \|V(t;c,a)\cdot 1\|}{t}\le \bar \lambda,
$$
for all $\epsilon>0$, }and thus also that $\ds\limsup_{t\to\infty}\frac{\ln\|V(t;c,a)\|}{t} \leq \lambda_p(c,a)$.
{This concludes the proof of the lemma.}
\end{proof}

We now prove Theorem  \ref{pev-thm}.

\begin{proof}[Proof of Theorem \ref{pev-thm}]
{For $c=0$, it has been proved in \cite{ShZh1}. Now we assume that $c>0$.}

{Suppose that $\lambda>\lambda_{\mathcal{T}}$ and let}  $(\lambda_{\mathcal{T}}, \phi) $ be as in Lemma \ref{eigen-prop0} such that
$$
c  \phi'(\xi) - \phi(\xi)+a(\xi) \phi(\xi)=\lambda_{\mathcal{T}} \phi(\xi),
$$
and then
$$
-c  \phi'(\xi)+ (\lambda+1-a(\xi)) \phi(\xi)=(\lambda-\lambda_{\mathcal{T}}) \phi(\xi),
$$
denoted by
$$
(-c \p_\xi+(\lambda+1-a(\cdot))I )\phi=(\lambda-\lambda_{\mathcal{T}}) \phi.
$$
Hence
$$
(-c\p_\xi +(\lambda+1-a(\cdot))I)^{-1} (\lambda-\lambda_{\mathcal{T}})\phi=\phi.
$$
This implies that
$$
\mathcal{K}(-c \p_\eta+(\lambda+1-a(\cdot))I)^{-1} (\lambda-\lambda_{\mathcal{T}})\phi=\mathcal{K}\phi>(\lambda-\lambda_{\mathcal{T}})\phi
$$
for $0<\lambda-\lambda_{\mathcal{T}}\ll 1$. It then follows that
$$
\rho(\mathcal{K}(-c\p_\eta+(\lambda+1-a(\cdot))I)^{-1})>1
$$
for $0<\lambda-\lambda_{\mathcal{T}}\ll 1$.

By Lemma \ref{eigen-prop1}
$$
\rho(\mathcal{K}(-c\p_\eta+(\lambda+1-a(\cdot))I)^{-1})\to 0
$$
as $\lambda\to \infty$. Hence there are $\hat{\lambda}>\lambda_{\mathcal{T}}$ and a p-periodic positive function $\psi$  such that
$$
\rho(\mathcal{K}(-c\p_\eta+(\hat{\lambda}+1-a(\cdot))I)^{-1})=1.
$$
\begin{equation}
\label{aaux-eq3}
\int_{-\infty}^\infty \kappa(\eta-\xi) \big( -c\p_\eta +(\hat{\lambda} +1-a(\cdot))I\big)^{-1}\psi(\eta)d\eta=\psi(\eta).
\end{equation}
and thus
\begin{equation}
\label{aaux-eq4}
c\tilde \phi^{'}(\xi)+\int_{-\infty}^\infty \kappa(\eta-\xi)\tilde \phi(\eta)d\eta-\tilde \phi(\xi)+a(\xi)\tilde \phi(\xi)=\hat{\lambda} \tilde \phi(\xi),
\end{equation}
where $\tilde \phi(\xi)=\big( -c\p_\xi +(\hat{\lambda}+1-a(\cdot))I\big)^{-1}\psi(\xi)$. {Note that ${\tilde \phi(\xi)}>0$ on $\RR$ because $\psi(\xi)>0$ on $\RR$, and since $\big( -c\p_\xi +(\hat{\lambda}+1-a(\cdot))I\big)^{-1}$ is strongly positive (see Lemma \ref{eigen-prop}).}Therefore, $\hat{\lambda} \in \sigma(\mathcal{L})$.

Next, we show that $\hat{\lambda}=\lambda_p(c,a)$. Let $V(t;c,a)$ be as in Lemma  \ref{eigen-prop2}, the solution operator of \eqref{main-lin-eq1} with $f(\xi,0)$ being replaced by $a(\xi)$, We have {that}
$$
V(t;c,a)\tilde \phi=e^{\hat{\lambda} t}\tilde\phi\quad \forall\, \, t>0.
$$
{ For any $\phi \in Int(X_p^{+})$ with $\|\phi\|_{\infty}=1$, there exist positive $\sigma_1$ and $\sigma_2$ such that  $\sigma_1 \tilde \phi \le \phi \le \sigma_2 \tilde \phi$.  By comparison principle (Proposition \ref{comparison-linear-prop}),
$\sigma_1 V(t;c,a)\tilde \phi \le V(t;c,a)\phi \le \sigma_2 V(t;c,a)\tilde \phi$ and thus $\sigma_1 \|V(t;c,a)\tilde \phi\| \le \|V(t;c,a)\phi\| \le \sigma_2 \|V(t;c,a)\tilde \phi\|$. Since $\phi$ is arbitrary and $\|\phi\|_{\infty}=1$,  $\sigma_1 \|V(t;c,a)\tilde \phi\| \le \|V(t;c,a)\| \le \sigma_2 \|V(t;c,a)\tilde \phi\|$. Therefore, $\frac{ln(\sigma_1 \|V(t;c,a)\tilde \phi\|)}{t} \le \frac{ln(\|V(t;c,a)\|)}{t} \le \frac{ln(\sigma_2 \|V(t;c,a)\tilde \phi\|)}{t}$ for $t>0$. Then $\frac{ln(\sigma_1 \|e^{\hat{\lambda} t}\tilde \phi\|)}{t} \le \frac{ln(\|V(t;c,a)\|)}{t} \le \frac{ln(\sigma_2 \|e^{\hat{\lambda} t}\tilde \phi\|)}{t}$, and thus  $\hat{\lambda}+\frac{ln(\sigma_1 \|\tilde \phi\|)}{t} \le \frac{ln(\|V(t;c,a)\|)}{t} \le \hat{\lambda}+\frac{ln(\sigma_2 \|\tilde \phi\|)}{t}$. Letting $t \to \infty$, we have that}
$$
\limsup_{t\to\infty}\frac{\ln \|V(t;c,a)\|}{t}=\hat{\lambda}
$$
and hence   $\hat{\lambda}=\lambda_p(c,a)$ by Lemma \ref{eigen-prop2} .
\end{proof}

The following lemma shows the dependence of $\lambda_p(c,a)$ on  $a(\xi)$.

\begin{lemma}
\label{eigen-prop3}
$\lambda_p(c,a_1) \leq \lambda_p(c,a_2)$ whenever $a_1(\xi) \leq a_2(\xi)$. Moreover, $\lambda_p(c,a_1) < \lambda_p(c,a_2)$ if $a_1(\xi) \leq a_2(\xi)$ and $a_1(\xi) \neq a_2(\xi)$.
\end{lemma}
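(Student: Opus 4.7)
The plan is to derive both inequalities from the principal-eigenfunction characterization $V(t;c,a)\tilde\phi_a=e^{\lambda_p(c,a)t}\tilde\phi_a$ with $\tilde\phi_a>0$ supplied by the proof of Theorem \ref{pev-thm}, combined with the comparison principle for the solution semigroup $V(t;c,a)$ (a direct periodic adaptation of Proposition \ref{comparison-linear-prop}) and, for the strict version, a Duhamel identity relating $V(t;c,a_1)$ to $V(t;c,a_2)$.

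For the weak inequality, I first observe that the comparison principle gives $V(t;c,a_1)v_0\le V(t;c,a_2)v_0$ for every $v_0\in X_p^+$ whenever $a_1\le a_2$. Applying this with $v_0=\tilde\phi_{a_1}$ and using the strict positivity of $\tilde\phi_{a_2}$ on the compact set $[0,p]$ to choose $C>0$ with $\tilde\phi_{a_1}\le C\tilde\phi_{a_2}$, I obtain
\[ e^{\lambda_p(c,a_1)t}\tilde\phi_{a_1}=V(t;c,a_1)\tilde\phi_{a_1}\le V(t;c,a_2)\tilde\phi_{a_1}\le C\,V(t;c,a_2)\tilde\phi_{a_2}=Ce^{\lambda_p(c,a_2)t}\tilde\phi_{a_2}. \]
Taking sup-norms, dividing by $t$, and letting $t\to\infty$ yields $\lambda_p(c,a_1)\le\lambda_p(c,a_2)$.

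For the strict inequality, assume $b:=a_2-a_1\ge 0$ with $b\not\equiv 0$ and, for contradiction, $\lambda_p(c,a_1)=\lambda_p(c,a_2)=:\lambda^*$. Since $\mathcal{L}(c,a_2;p)=\mathcal{L}(c,a_1;p)+bI$, the Duhamel formula applied along the orbit of $\tilde\phi_{a_2}$ reads
\[ V(t;c,a_2)\tilde\phi_{a_2}=V(t;c,a_1)\tilde\phi_{a_2}+\int_0^t V(t-s;c,a_1)\bigl(b\cdot V(s;c,a_2)\tilde\phi_{a_2}\bigr)\,ds. \]
Substituting $V(s;c,a_2)\tilde\phi_{a_2}=e^{\lambda^* s}\tilde\phi_{a_2}$ on the right and $e^{\lambda^* t}\tilde\phi_{a_2}$ on the left, dropping the nonnegative term $V(t;c,a_1)\tilde\phi_{a_2}$, and changing variables $\tau=t-s$ gives
\[ \int_0^t e^{-\lambda^*\tau}V(\tau;c,a_1)(b\tilde\phi_{a_2})\,d\tau\le\tilde\phi_{a_2}\quad\text{for all }t\ge 0. \]
Since $b\ge 0$, $b\not\equiv 0$, and $\tilde\phi_{a_2}>0$, the periodic function $b\tilde\phi_{a_2}$ is nonnegative and nontrivial. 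Strong positivity of $V(s_0;c,a_1)$ for some $s_0>0$ (see below) produces $\delta>0$ with $V(s_0;c,a_1)(b\tilde\phi_{a_2})\ge\delta\tilde\phi_{a_1}$; then the semigroup property and $V(\tau-s_0;c,a_1)\tilde\phi_{a_1}=e^{\lambda^*(\tau-s_0)}\tilde\phi_{a_1}$ yield $e^{-\lambda^*\tau}V(\tau;c,a_1)(b\tilde\phi_{a_2})\ge\delta e^{-\lambda^* s_0}\tilde\phi_{a_1}$ for all $\tau\ge s_0$, so the integral above is at least $\delta e^{-\lambda^* s_0}(t-s_0)\tilde\phi_{a_1}$, contradicting the uniform bound $\tilde\phi_{a_2}$ as $t\to\infty$.

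The principal obstacle is the strong-positivity step $V(s_0;c,a_1)(b\tilde\phi_{a_2})\ge\delta\tilde\phi_{a_1}$: one must show that the semigroup on $X_p$ maps any nontrivial nonnegative periodic function to an everywhere positive function in finite time. I expect this to follow from $k(0)>0$ and continuity of $k$ (a single convolution spreads positivity to a neighborhood), combined with the advection $c>0$ and iteration of this effect, using periodicity to turn pointwise positivity into a uniform positive lower bound against $\tilde\phi_{a_1}$. An alternative is to invoke Krein--Rutman directly on the compact strongly positive operator $\mathcal{K}(\alpha I-\mathcal{T})^{-1}$ from Lemma \ref{eigen-prop}: strong positivity of this operator implies that its spectral radius $\rho(\alpha)$ is strictly monotone under strict pointwise enlargement of $a$, and the strict monotonicity of $\lambda_p(c,a)$ then follows from the identification $\rho(\lambda_p(c,a))=1$ used in the proof of Theorem \ref{pev-thm}.
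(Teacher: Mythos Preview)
Your weak-inequality argument is correct and close in spirit to the paper's, though the paper streamlines it slightly: it invokes Lemma~\ref{eigen-prop2} directly, so that $\lambda_p(c,a_i)=\limsup_{t\to\infty}\frac{\ln\|V(t;c,a_i)\|}{t}$, and then the comparison principle gives $\|V(t;c,a_1)\|\le\|V(t;c,a_2)\|$ without needing to introduce or compare the eigenfunctions $\tilde\phi_{a_i}$.

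For the strict inequality, your \emph{alternative} at the end is precisely the paper's route. The paper argues by contradiction: if $\lambda_p(c,a_1)=\lambda_p(c,a_2)=\bar\lambda_p$, then both eigenfunction equations rewrite as $\mathcal{K}(-c\partial_\xi+(\bar\lambda_p+1-a_i)I)^{-1}w_i=w_i$ with $w_i>0$, forcing $\rho\bigl(\mathcal{K}(-c\partial_\xi+(\bar\lambda_p+1-a_i)I)^{-1}\bigr)=1$ for $i=1,2$. But the formula for $(\alpha I-\mathcal{T})^{-1}$ in Lemma~\ref{eigen-prop} shows that enlarging $a$ strictly enlarges this positive compact operator, so Krein--Rutman (Deimling, Theorem~19.3(d)) gives strict inequality of the spectral radii, a contradiction. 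This is exactly the mechanism you sketch in your last two sentences.

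Your \emph{main} Duhamel argument is a genuinely different route and is valid once the strong-positivity step $V(s_0;c,a_1)(b\tilde\phi_{a_2})\ge\delta\tilde\phi_{a_1}$ is established; your sketch of this via $k(0)>0$ and iteration is plausible but not carried out. The paper's approach sidesteps this entirely: strong positivity of $\mathcal{K}(\alpha I-\mathcal{T})^{-1}$ on $X_p$ is already proved in Lemma~\ref{eigen-prop}(2), so no new positivity analysis of the full evolution semigroup is needed. In short, your alternative is both the cleaner choice and the one the paper takes.
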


\begin{proof}
{ With Lemma \ref{eigen-prop2} and the comparison principle (Proposition \ref{comparison-linear-prop}), we have that
$$
\lambda_p(c,a_1)=\limsup_{t\to\infty}\frac{\ln \|V(t;c,a_1)\|}{t}\le\limsup_{t\to\infty}\frac{\ln \|V(t;c,a_2)\|}{t}=\lambda_p(c,a_2).
$$
We shall prove the second statement by contradiction. Suppose that $\lambda_p(c,a_1)=\lambda_p(c,a_2):=\bar \lambda_p $ and $\phi_i(\xi)$  is the corresponding positive principal eigenfunction to $\lambda_p(c,a_i)$ for $i=1,2$, that is,
$$
c\phi_i^{'}(\xi)+\int_{-\infty}^\infty \kappa(\eta-\xi)\phi_i(\eta)d\eta-\phi_i(\xi)+a_i(\xi) \phi(\xi)=\bar \lambda_p \phi_i(\xi).
$$
Then
$$
\int_{-\infty}^\infty \kappa(\eta-\xi)\phi_i(\eta)d\eta=-c\phi_i^{'}(\xi)+\phi_i(\xi)-a_i(\xi) \phi(\xi)+\bar \lambda_p \phi_i(\xi),
$$
denoted by $\mathcal{K} \phi_i= (-c\p_\xi+(\bar\lambda_p+1-a_i(\cdot))I)\phi_i$ for $i=1,2$.
Hence we have that $\mathcal{K}(-c\p_\eta+(\bar\lambda_p+1-a_i(\cdot))I)^{-1}w=w$ with $w(\xi)=-c\phi_i^{'}(\xi)+\phi_i(\xi)-a_i(\xi) \phi(\xi)+\bar \lambda_p \phi_i(\xi)=\int_{-\infty}^\infty \kappa(\eta-\xi)\phi_i(\eta)d\eta>0$ for $i=1,2$.
This implies that $\rho(\mathcal{K}(-c\p_\eta+(\bar\lambda_p+1-a_i(\cdot))I)^{-1})=1$ for $i=1,2$. On the other hand, by the arguments in the proof of Lemma \ref{eigen-prop1}(1), we have that  $\rho(\mathcal{K}(-c\p_\eta+(\bar\lambda_p+1-a_1(\cdot))I)^{-1})>\rho(\mathcal{K}(-c\p_\eta+(\bar\lambda_p+1-a_2(\cdot))I)^{-1})$ if $a_1(\xi) \leq a_2(\xi)$ and $a_1(\xi) \neq a_2(\xi)$, which is a contradiction.
}
\end{proof}

\subsection{Dependence of principal eigenvalue on moving speed $c$ and patch size $L$}

In this subsection, we explore some important properties of $\lambda(c,L)$. Recall that $\lambda(c,L)$ is the principal
spectrum point of the spectral problem \eqref{eigen-eq1}, that is, the spectral problem associated to the linearization of \eqref{main-eq} at the trivial solution $v\equiv 0$. In particular, we study the dependence of $\lambda(c,L)$ on $c$ and $L$.

The main results of this subsection are stated in the following theorem.

\begin{theorem}
\label{new-aux-main-thm}

\begin{itemize}

\item[(1)] $\lambda(c,L)$ is a principal eigenvalue. Moreover, let $\phi(\xi)$ be a corresponding
positive eigenfunction,  then
There are $\tilde M_\pm$ such that
$$
\limsup_{\xi\to\infty}\frac{\phi(\xi)}{e^{\mu_-(\lambda(c,L))\xi}}\le \tilde M_+
$$
and
$$
\limsup_{\xi\to -\infty}\frac{\phi(\xi)}{e^{\mu_+(\lambda(c,L))\xi}}\le\tilde M_-,
$$
where $\mu_\pm(\lambda)$ is defined in \eqref{mu-plus-minus-eq}.

\item[(2)] If $0< c< c^*$, there is $0\le L^{**}<\infty$ such that $\lambda(c,L) >0$ for all $L> L^{**}$, and for any $0<L<L^{**}$, $\lambda(c,L) \le 0$.

\item[(3)] If $c>c^*$, then $\lambda(c,L) < 0$.

\end{itemize}
\end{theorem}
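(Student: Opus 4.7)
The plan proceeds part-by-part: approximate to reduce (1) to the periodic case, use monotonicity in $L$ plus a spreading construction for (2), and run a traveling-wave comparison for (3).

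For part (1), let $a(\xi):=f(\xi,0)$ and approximate by smooth $p_n$-periodic functions $a_n$ that agree with $a$ on $[-p_n/2+1,\,p_n/2-1]$ and are periodically interpolated outside. For each $n$, Theorem \ref{pev-thm} yields a principal eigenvalue $\lambda_n$ with positive $p_n$-periodic eigenfunction $\phi_n$, which I normalize by $\phi_n(0)=1$. The $\{\lambda_n\}$ are uniformly bounded, and the eigenvalue equation together with (H1) furnishes uniform $C^1$ bounds on compact sets for $\{\phi_n\}$; a subsequence then converges locally uniformly to $(\lambda^*,\phi^*)$ with $\mathcal{L}(c)\phi^*=\lambda^*\phi^*$, $\phi^*\ge 0$, $\phi^*(0)=1$, and a strong-positivity argument using $k(0)>0$ gives $\phi^*>0$ on $\RR$. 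The identification $\lambda^*=\lambda(c,L)$ needs both directions: $\lambda^*\le\lambda(c,L)$ is immediate, while the reverse requires a non-periodic analogue of Lemma \ref{eigen-prop2}, e.g.\ show any $\lambda>\lambda^*$ is in the resolvent set by representing the resolvent through the positive weight $\phi^*$. The tail estimate then follows verbatim from the proof of Theorem \ref{tail-thm}: outside $|\xi|>L+L_0$, $\phi^*$ satisfies
\[
c\phi^{*\prime}(\xi)+\int_\RR k(\eta-\xi)\phi^*(\eta)\,d\eta-\phi^*(\xi)-q\phi^*(\xi)=\lambda(c,L)\phi^*(\xi),
\]
and Lemma \ref{end-behavior-lm1} with $M:=\|\phi^*\|_\infty$ and the roots $\mu_\pm(\lambda(c,L))$ of $g(\mu;\lambda(c,L))=0$ supplies the exponential barriers.

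For part (2), first note monotonicity: since $f(\xi,0;L_1)\le f(\xi,0;L_2)$ for $L_1\le L_2$, Proposition \ref{comparison-linear-prop} together with the semigroup representation of $\lambda(c,L)$ gives $\lambda(c,L_1)\le \lambda(c,L_2)$. For a positive lower bound at large $L$, take a smooth $\psi\in X^+$ supported in $[-1,1]$ with $\|\psi\|_\infty=1$; Proposition \ref{basic-convergence}(1) gives $\tilde v(T,\xi;\psi,L)\to\tilde v_\infty(T,\xi;\psi)=u(T,\xi+cT;\psi)$ locally uniformly as $L\to\infty$, where $u$ solves the linear nonlocal KPP $u_t=\int_\RR k(y-x)u(t,y)\,dy-u+ru$. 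Since $|c|<c^*$, the standard exponential growth of the linear KPP along moving frames yields $\beta(c)>0$ with $\tilde v_\infty(T,\xi;\psi)\ge Ce^{\beta(c)T}$ on $\xi\in[-1,1]$ for $T$ large. Fixing $T$ with $Ce^{\beta(c)T}\ge 4$ and then $L$ large, $V(T;c,L)\psi\ge 2\psi$ pointwise. Iterating gives $V(nT;c,L)\psi\ge 2^n\psi$; comparing with the principal eigenfunction $\phi$ from part (1) via $\psi\le C'\phi$ (which holds since $\psi$ is compactly supported and $\phi>0$) and $V(nT;c,L)\phi=e^{\lambda(c,L)nT}\phi$ forces $2^n\psi(\xi_0)\le C'e^{\lambda(c,L)nT}\phi(\xi_0)$ at any $\xi_0$ with $\psi(\xi_0)>0$, so $\lambda(c,L)\ge(\ln 2)/T>0$. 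Set $L^{**}:=\inf\{L>0:\lambda(c,L)>0\}$; this is finite, and monotonicity yields the stated dichotomy.

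For part (3), argue by contradiction: suppose $\lambda:=\lambda(c,L)\ge 0$. By part (1), there is a positive eigenfunction $\phi$ with $\phi(\xi)\le\tilde M_+e^{\mu_-(\lambda)\xi}$ at $+\infty$. Since $g(\mu;\lambda)$ is convex in $\mu$ and decreasing in $\lambda$, $|\mu_-(\lambda)|\ge|\mu_-(0)|$, and the proof of Theorem \ref{existence-thm}(2) gives $|\mu_-(0)|>\mu^*$ when $c>c^*$. Pick $\tilde c\in(c^*,c)$ and the smaller root $\mu_1\in(0,\mu^*)$ of $\tilde c\mu=\int_\RR e^{-\mu z}k(z)\,dz-1+r$, so $\mu_1<\mu^*<|\mu_-(\lambda)|$. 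A direct calculation using $\tilde c\mu_1=\int_\RR e^{-\mu_1 z}k(z)\,dz-1+r$ and $f(\xi,0)\le r$ shows that
\[
v(t,\xi):=A\,e^{-\mu_1\xi}\,e^{\mu_1(\tilde c-c)t}
\]
satisfies $v_t-\mathcal{L}(c)v=(r-f(\xi,0))v\ge 0$, i.e.\ $v$ is a super-solution of \eqref{main-lin-eq1}; meanwhile $\phi$ is a time-independent sub-solution because $\mathcal{L}(c)\phi=\lambda\phi\ge 0$. Since $\phi$ decays faster than $e^{-\mu_1\xi}$ at $+\infty$ and is bounded, choosing $A$ large yields $\phi(\xi)\le Ae^{-\mu_1\xi}=v(0,\xi)$ for all $\xi$; the difference $v-\phi$ is bounded below by $-\|\phi\|_\infty$, so Proposition \ref{comparison-linear-prop}(1) gives $\phi(\xi)\le v(t,\xi)$ for all $t\ge 0$. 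Letting $t\to\infty$ with $\mu_1(\tilde c-c)<0$ forces $\phi\le 0$, contradicting positivity of $\phi$.

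The main technical obstacle is part (1), specifically identifying $\lambda^*$ with the principal spectral point $\lambda(c,L)$ and preventing the normalized eigenfunctions $\phi_n$ from degenerating away from $\xi=0$; this calls for a Harnack-type lower bound for the nonlocal operator together with a non-periodic analogue of Lemma \ref{eigen-prop2}. Once (1) and monotonicity are secured, parts (2) and (3) reduce to the direct comparison constructions above.
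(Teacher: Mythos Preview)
Your overall strategy is sound and parts (2) and (3) are correct, though they differ from the paper's arguments in instructive ways. Let me compare part by part.

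\textbf{Part (1).} You and the paper both use periodic approximation, but the paper resolves the two obstacles you flag much more cheaply. First, the paper normalizes $\|\phi_{2np}\|_\infty=1$ rather than $\phi_n(0)=1$; evaluating the eigenvalue equation at the point $\xi_{2np}$ where the maximum is attained and using $\lambda_p(c,a_{2np})>-q$ forces $a_{2np}(\xi_{2np})>-q$, hence $\xi_{2np}\in(-L-L_0,L+L_0)$. This pins the mass of $\phi_{2np}$ to a fixed compact set with no Harnack machinery. Second, for the identification $\lambda^*=\lambda(c,L)$ the paper does not build a resolvent: it chooses the periodic coefficients so that $a_{2np}(\xi)\ge f(\xi,0)$ for all $\xi$, whence monotonicity of the principal spectral point in the zeroth-order coefficient (Remark \ref{monotone-rk}, which follows from Proposition \ref{new-aux-prop2} and comparison) gives $\lambda(c,L)\le\lambda_p(c,a_{2np})$ for every $n$, and the reverse inequality $\lambda_\infty\le\lambda(c,L)$ is immediate since $\lambda_\infty$ is an eigenvalue. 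Your resolvent-representation route would work but is unnecessary.

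\textbf{Part (2).} Your argument via direct exponential growth of the linear nonlocal KPP in a sub-critical moving frame is correct and arguably more direct, but it relies on a pointwise linear growth estimate that the paper never states. The paper instead leverages the \emph{nonlinear} spreading result already recorded as Proposition \ref{c-star-prop}: it perturbs to $r(1-\epsilon-u)u$, uses nonlinear spreading to get $v_\infty(T,\cdot;u_0)\ge(1-\epsilon)/2$ on $\mathrm{supp}(u_0)$, compares up to the linear solution of \eqref{fixed-domain-eq3}, and then invokes Proposition \ref{basic-convergence} to transfer this to finite $L$, yielding $v(T,\cdot;u_0,L)\ge e^{r\epsilon T}u_0$ and hence $\lambda(c,L)\ge r\epsilon$. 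Both approaches are valid; yours is cleaner once the linear growth lemma is in hand, while the paper's is self-contained given the quoted spreading result.

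\textbf{Part (3).} Your comparison with the exponential super-solution $v(t,\xi)=Ae^{-\mu_1\xi}e^{\mu_1(\tilde c-c)t}$ is correct and is essentially the linearized version of the paper's proof of Theorem \ref{existence-thm}(2). The paper instead uses a duality argument: it pairs the eigenvalue equation for $\phi$ against $\psi(\xi)=e^{\mu^*\xi}$, which satisfies an adjoint-type equation with eigenvalue $\lambda^*=\mu^*(c^*-c)$, integrates by parts (using the tail decay from part (1) to kill the boundary terms), and obtains $(\lambda(c,L)-\lambda^*)\int\phi\psi=\int(f(\xi,0)-r)\phi\psi\le 0$, whence $\lambda(c,L)\le\lambda^*<0$. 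Your method gives only a contradiction from $\lambda(c,L)\ge 0$, while the paper's gives the quantitative bound $\lambda(c,L)\le\mu^*(c^*-c)$; on the other hand yours avoids integration over $\RR$ and the attendant integrability checks. One small technical point: your super-solution $v$ is unbounded as $\xi\to-\infty$, so you should note that the proof of Proposition \ref{comparison-linear-prop}(1) only uses a uniform lower bound on $\bar v-\underline v$, which you have.
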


 {{To prove the above Theorem we shall first prove some auxiliary results. Pick} $\frac{p}{2}>L_0+L$ and define a p-periodic function $a_p(\xi;L,L_0)$ as follows:}
$$
a_p(\xi;L,L_0)=\begin{cases}
f(\xi,0), \xi \in [-\frac{p}{2},\frac{p}{2}] \cr
f_{p}(\xi+p,0)=f_{p}(\xi,0).
\end{cases}
$$
Observe that
$$
a_p(\xi;L,L_0)\ge a_{2p}(\xi;L,L_0)\ge \cdots\ge a_{2np}(\xi;L,L_0)\ge \cdots { \ge f(\xi,0)\ge -q},
$$
and then { with $a_{2np}(0)=r>-q$, Lemma \ref{eigen-prop3} implies that}
$$
\lambda_p(c,a_p)\ge \lambda_{p}(c,a_{2p})\ge \cdots\ge \lambda_{p}(c,a_{2np})\ge\cdots{ > \lambda_{p}(c,-q)=-q.}
$$
{Then letting $p \to \infty$, the limit of $\lambda_{p}(c,a_{2np})$ exists, and let}
$$
\lambda_\infty(c,L,L_0)=\lim_{n\to\infty}\lambda_{p}(c,a_{2np}).
$$

\begin{proposition}
\label{new-aux-prop2}
 Let $V(t;c,L)$ be the solution operator of \eqref{main-lin-eq1}, that is, $v(t,\cdot;v_0)=V(t;c,L)v_0$ is the solution of \eqref{main-lin-eq1} with
$v(0,\cdot;v_0)=v_0(\cdot)\in X$. Then
$$
\lambda(c,L)=\limsup_{t\to\infty}\frac{\ln \|V(t;c,L)\|}{t}.
$$
\end{proposition}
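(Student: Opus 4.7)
The argument is essentially a direct adaptation of the proof of Lemma \ref{eigen-prop2}, replacing the periodic space $X_p$ by $X$ and the operator $\mathcal{L}(c,a;p)$ by $\mathcal{L}(c)$ acting on $X^1$. The plan is to establish the two inequalities separately.

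First I would prove $\lambda(c,L)\le \limsup_{t\to\infty}\frac{\ln\|V(t;c,L)\|}{t}$. Fix any $\lambda$ strictly greater than this $\limsup$; then there exists $M>0$ with $\|V(t;c,L)\|\le Me^{\lambda t}$ for all $t\ge 0$. For arbitrary $\epsilon>0$, setting $\tilde v(t,\xi)=e^{-(\lambda+\epsilon)t}v(t,\xi)$ transforms \eqref{main-lin-eq1} into
\begin{equation*}
\tilde v_t=c\tilde v_\xi+\int_{\RR}k(\eta-\xi)\tilde v(t,\eta)d\eta-\tilde v(t,\xi)+f(\xi,0)\tilde v(t,\xi)-(\lambda+\epsilon)\tilde v,
\end{equation*}
whose solution operator decays like $Me^{-\epsilon t}$. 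For $w\in X$, defining $\tilde v(\xi;w)=\int_0^\infty \tilde V(\tau;c,L)w\,d\tau$ produces the unique bounded stationary solution of the inhomogeneous equation with forcing $w$, by the same Leibniz-rule/uniqueness argument as in Lemma \ref{eigen-prop2} (uniqueness follows because any two such stationary solutions differ by a bounded stationary solution of the homogeneous equation, which must vanish by the exponential decay estimate). This shows $(\mathcal{L}(c)-(\lambda+\epsilon)I)^{-1}$ exists as a bounded operator on $X$, so $\lambda+\epsilon$ lies in the resolvent set of $\mathcal{L}(c)$, hence $\lambda+\epsilon>\lambda(c,L)$. Sending $\epsilon\to 0^+$ and then taking the infimum over admissible $\lambda$ yields the inequality.

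For the reverse inequality, I would fix $\epsilon>0$ and set $\bar\lambda=\lambda(c,L)+\epsilon$. Since $\bar\lambda$ is in the resolvent set, I can construct $v_M\in X^1$ solving
\begin{equation*}
cv_M'+\int_{\RR}k(\eta-\xi)v_M(\eta)d\eta-v_M(\xi)+f(\xi,0)v_M(\xi)-\bar\lambda v_M(\xi)+M=0
\end{equation*}
for any $M>0$. Exactly as in Lemma \ref{eigen-prop2}, rewriting this as $\tilde\lambda v_M-v_M'=\tilde f(\xi,v_M)$ with $\tilde\lambda=(1+q+|\bar\lambda|)/c>0$ and $\tilde f\ge M/c$, then multiplying by $e^{-\tilde\lambda\xi}$ and integrating over $[\xi,\infty)$, gives $v_M(\xi)\ge M/(c\tilde\lambda)$. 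Choosing $M$ large enough ensures $v_M(\xi)\ge 1$ for all $\xi$, so $v_M$ is a positive super-solution of \eqref{main-lin-eq1} shifted by $\bar\lambda$. The comparison principle (Proposition \ref{comparison-linear-prop}) then gives $0<e^{-\bar\lambda t}V(t;c,L)\cdot 1\le v_M(\xi)$ for all $t\ge 0$, so $\limsup_{t\to\infty}\frac{\ln\|V(t;c,L)\cdot 1\|}{t}\le \bar\lambda$. Since $\|V(t;c,L)\|$ on $X$ is dominated by $\|V(t;c,L)\cdot 1\|$ up to a bounded factor via the comparison principle applied to $-\|v_0\|\le v_0\le \|v_0\|$, we conclude $\limsup_{t\to\infty}\frac{\ln\|V(t;c,L)\|}{t}\le\bar\lambda=\lambda(c,L)+\epsilon$, and letting $\epsilon\to 0^+$ finishes the proof.

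The main technical obstacle is the loss of periodicity, which played a small but convenient role in Lemma \ref{eigen-prop2} when bounding the resolvent; here boundedness in $X$ replaces periodicity, and the uniqueness of bounded stationary solutions to the inhomogeneous shifted equation must be argued directly from the exponential decay of $\tilde V(t;c,L)$ rather than from the structure of $X_p$. Everything else (Leibniz differentiation under the integral, the super-solution construction, and the comparison principle) carries over verbatim.
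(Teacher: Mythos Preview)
Your proposal is correct and takes essentially the same approach as the paper: the paper's proof consists of the single remark that periodicity plays no role in the proof of Lemma~\ref{eigen-prop2} and that one may replace $X_p$ by $X$, and you have simply written out that adaptation in detail. Your closing paragraph slightly overstates the difficulty---the paper asserts (correctly) that periodicity is not actually used anywhere in the argument of Lemma~\ref{eigen-prop2}, so there is no genuine technical obstacle to overcome---but the substance of your two inequalities matches the paper exactly.
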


\begin{proof}
{Note that the periodicity plays no role in the proof of Lemma \ref{eigen-prop2} and we can replace $X_p$ by $X$.}
\end{proof}

{ \begin{remark}
\label{monotone-rk}
Let $a(\cdot)\in X$ and $\lambda(a)$ be the principal spectral point of the eigenvalue problem \eqref{eigen-eq1} on $X$ with $f(\cdot,0)$ being replaced by $a(\cdot)$. Similar to Lemma \ref{eigen-prop3}, we have that
$$
\lambda(a_1)\le \lambda(a_2)
$$
for $a_1,a_2\in X$ with $a_1(x)\le a_2(x)$ ($x\in\RR$). {By observation, 1 is a positive principal eigenvector of $\lambda(-q)$ and  $\lambda(-q)=-q$. Since $f(\xi,0) \ge -q$, we have that}
$$
\lambda(c,L)\ge \lambda(-q)=-q.
$$
\end{remark}
}

\begin{proposition}
\label{new-aux-prop1}
$\lambda(c,L)=\lambda_\infty(c,L,L_0)$ and $\lambda(c,L)$ is an eigenvalue.
\end{proposition}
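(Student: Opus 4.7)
The plan is to bracket $\lambda(c,L)$ between $\lambda_\infty(c,L,L_0)$ from above and below, and simultaneously extract a positive eigenfunction of $\mathcal{L}(c)$ from the periodic eigenfunctions produced by Theorem~\ref{pev-thm}. Concretely I would prove (i) $\lambda(c,L)\le \lambda_p(c,a_{2np})$ for every $n$, and (ii) the existence of $\phi\in X^+\setminus\{0\}$ with $\mathcal{L}(c)\phi = \lambda_\infty(c,L,L_0)\phi$. Step (i) gives ``$\le$'' after $n\to\infty$, while (ii) gives the reverse inequality and shows the supremum is attained as a principal eigenvalue.

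For (i), fix $n$ and compare the solution operator $V(t;c,L)$ of \eqref{main-lin-eq1} with the operator $W_n(t)$ of the analogous equation in which $f(\cdot,0)$ is replaced by $a_{2np}(\cdot)$. Since $f(\xi,0)\le a_{2np}(\xi)$, Proposition~\ref{comparison-linear-prop}(1) gives $V(t;c,L)u_0\le W_n(t)u_0$ for $u_0\ge 0$. For real $u_0$ with $\|u_0\|_\infty\le 1$, linearity and comparison applied to $1\pm u_0\ge 0$ yield $|V(t;c,L)u_0|\le V(t;c,L)\cdot 1\le W_n(t)\cdot 1$, so $\|V(t;c,L)\|\le \|W_n(t)\cdot 1\|_\infty$. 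Because $1\in X_{2np}$ and $a_{2np}$ is $2np$-periodic, $W_n(t)\cdot 1\in X_{2np}$; combining Lemma~\ref{eigen-prop2} and Proposition~\ref{new-aux-prop2} then gives $\lambda(c,L)\le \lambda_p(c,a_{2np})$, and letting $n\to\infty$ delivers $\lambda(c,L)\le \lambda_\infty(c,L,L_0)$.

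For (ii), Theorem~\ref{pev-thm} furnishes a positive $2np$-periodic eigenfunction $\phi_n$ of $\mathcal{L}(c,a_{2np};2np)$ with eigenvalue $\lambda_n=\lambda_p(c,a_{2np})$. Normalize $\|\phi_n\|_\infty=1$, attained at some $\xi_n\in[-np,np]$. Since $-q\le a_{2np}$ with $a_{2np}(0)=r>-q$, Lemma~\ref{eigen-prop3} gives $\lambda_n>-q$, so $\mu_\pm(\lambda_n)$ are well defined. On the interval $(L+L_0,\,2np-(L+L_0))$ the coefficient $a_{2np}$ equals $-q$, and the construction of Lemma~\ref{end-behavior-lm1} with $q$ replaced by $q+\lambda_n$ produces the strictly convex super-solution
\[
\psi_n(\xi)=k_1 e^{\mu_-(\lambda_n)(\xi-(L+L_0))}+k_2 e^{\mu_+(\lambda_n)(\xi-(L+L_0))}
\]
with $\psi_n\equiv 1$ at both endpoints and $\psi_n<1$ strictly inside. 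A stationary analogue of Proposition~\ref{comparison_Nolocal_D-prop} gives $\phi_n\le\psi_n$ on this interval, which forces $\xi_n\notin(L+L_0,\,2np-(L+L_0))$; the symmetric argument on the left yields $|\xi_n|\le L+L_0$ uniformly in $n$. The eigenvalue equation then produces a uniform $\|\phi_n'\|_\infty$ bound, so Arzel\`a--Ascoli extracts a subsequence with $\xi_n\to\xi_*\in[-(L+L_0),L+L_0]$ and $\phi_n\to\phi$, $\phi_n'\to\phi'$ locally uniformly. Passing to the limit (dominated convergence handles the convolution since $\phi_n\le 1$, and $a_{2np}(\xi)=f(\xi,0)$ whenever $|\xi|\le np$) shows $\phi\in X^+\setminus\{0\}$ satisfies $\mathcal{L}(c)\phi=\lambda_\infty\phi$. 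Hence $\lambda_\infty\in\sigma(\mathcal{L}(c))$, so $\lambda_\infty\le\lambda(c,L)$, and combined with (i) we conclude $\lambda(c,L)=\lambda_\infty(c,L,L_0)$ and that it is a principal eigenvalue.

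The delicate step is the uniform localization $|\xi_n|\le L+L_0$: it rests on the strict convexity of $\psi_n$ together with a stationary version of the nonlocal Dirichlet comparison principle, which is valid precisely because $\lambda_n>-q$ makes the bulk coefficient $-(1+q+\lambda_n)$ strictly negative. A secondary care-point is verifying that stationary version of Proposition~\ref{comparison_Nolocal_D-prop}, whose minimum-point argument carries over, with the strict inequality at an interior minimum supplied by $k(0)>0$ together with the fact that $\phi_n$ is not identically constant on the interval.
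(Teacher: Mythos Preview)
Your argument is correct and follows the same two-sided bracketing strategy as the paper: monotonicity in the coefficient gives $\lambda(c,L)\le\lambda_p(c,a_{2np})$ for every $n$, and a compactness/limit argument on the periodic eigenfunctions $\phi_n$ produces a positive eigenfunction of $\mathcal{L}(c)$ with eigenvalue $\lambda_\infty$, giving the reverse inequality and the principal-eigenvalue claim.

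The one place where you diverge from the paper is the localization $|\xi_n|\le L+L_0$. You obtain it via the Dirichlet super-solution $\psi_n$ of Lemma~\ref{end-behavior-lm1} and a stationary version of Proposition~\ref{comparison_Nolocal_D-prop}; this is valid but heavier than necessary. The paper simply evaluates the eigenvalue equation at the maximum point $\xi_n$: since $\phi_n'(\xi_n)=0$ and $\phi_n(\xi_n)=1$, one reads off
\[
\lambda_n=\int_{\RR}k(\xi_n-\eta)\phi_n(\eta)\,d\eta-1+a_{2np}(\xi_n)<a_{2np}(\xi_n),
\]
the strict inequality coming from $\phi_n\le 1$ but $\phi_n\not\equiv 1$. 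Combined with $\lambda_n>-q$ this forces $a_{2np}(\xi_n)>-q$, i.e.\ $\xi_n\in(-L-L_0,L+L_0)$. This one-line argument avoids the Dirichlet comparison entirely and sidesteps the care-points you flag about the stationary version of Proposition~\ref{comparison_Nolocal_D-prop}.
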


\begin{proof}
First, let {$\phi_{2np}(\xi)$} be the positive {2np-periodic eigenfunction corresponding to {$\lambda_p(c,a_{2np})$}  with $\|\phi_{2np}\|_\infty=1$, that is, $(\lambda_p(c,a_{2np}),\phi_{2np})$ satisfies that
\begin{equation}
\label{eigen_p_eq}
c\phi_{2np}'(\xi)+\int_{\RR}\kappa(\xi-\eta)\phi_{2np}(\eta)d\eta-\phi_{2np}(\xi)+a_{2np}(\xi;L,L_0)\phi_{2np}(\xi)
=\lambda_p(c,a_{2np})\phi_{2np}(\xi),
\end{equation}
Let
{$\xi_{2np}\in[-np,np]$} be such that  $\phi_{2np}(\xi_{2np})=\ds\sup_{\xi\in\RR}\phi_{2np}(\xi)=1$. Plugging $\phi_{2np}(\xi_{2np})=1$ and $\phi_{2np}'(\xi_{2np})=0$ into Equation \eqref{eigen_p_eq}, we have that
$$\lambda_p(c,a_{2np})=\int_{\RR}\kappa(\xi_{2np}-\eta)\phi_{2np}(\eta)d\eta-1+a_{2np}(\xi_{2np};L,L_0).$$
Note that $\phi_{2np}\le 1$ but not identical to 1 and so $\int_{\RR}\kappa(\xi-\eta)\phi_{2np}(\eta)d\eta<1$ for any $\xi \in \RR$. Then $\lambda_p(c,a_{2np})< a_{2np}(\xi_{2np};L,L_0)$.
On the other hand, with Lemma \ref{eigen-prop3}, we have that $-q < \lambda_p(c,a_{2np})$ and so $-q<a_{2np}(\xi_{2np};L,L_0)$.
This implies that $\xi_{2np}\in (-L-L_0,L+L_0) \subset [-np,np]$.

Next, recall that  $-q< \lambda_p(c,a_{2np})<  a_{2np}(\xi_{2np};L,L_0)$. Then by Equation \eqref{eigen_p_eq}, we have that
\begin{align*}
c|\phi_{2np}^{'}(\xi)|&=|-\int_{\RR}\kappa (\xi-\eta)\phi_{2n p}(\eta)d\eta+\phi_{2np}(\xi){(-\lambda_p(c,a_{2n p})+1-a_{2n p}(\xi;L,L_0))}|\\
&\le\int_{\RR}\kappa (\eta)\|\phi_{2n p}\|_{\infty}d\eta+\|\phi_{2n p}\|_{\infty}(|\lambda_p(c,a_{2n p})|+1+\|a_{2n p}\|_{\infty})\\
&\le2\|\phi_{2n p}\|_{\infty}(1+\|a_{2n p}\|_{\infty}).
\end{align*}
Thus with $\ds\|a_{2np}\|_{\infty}=\max_{\xi\in\RR}|a_{2np}(\xi;L,L_0)|=\max\{r,q\}$ and $\|\phi_{2n p}\|_{\infty}=1$, we have that
$$
 \sup_{\xi\in\RR}|\phi^{'}_{2np}(\xi)|\le\frac{2}{c}\Big(1+\max\{r,q\}\Big).
$$
Therefore} there is $n_k\to\infty$ such that
$\xi_{2n_k p}\to \xi^\infty\in [-L-L_0,L+L_0]$ and $\phi_{2n_kp}(\xi)\to \phi_\infty(\xi)$ locally uniformly in $\xi\in\RR$.
Moreover, we have {that} $\p_\xi\phi_\infty(\xi)$ exists and
$$
c\p_\xi\phi_\infty(\xi)+\int_{\RR}\kappa(\xi-\eta)\phi_\infty(\eta)d\eta-\phi_\infty(\xi)+f(\xi,0)\phi_\infty(\xi)=\lambda_\infty(c,L,L_0)\phi_\infty(\xi).
$$
Since $\phi_\infty(\xi_\infty)=1$, { we have that $\phi_\infty(\xi)\not\equiv 0$. By the arguments as in item (2) of
Proposition \ref{comparison-linear-prop},}
we have that $\phi_\infty(\xi)>0$ for all $\xi\in\RR$.
Therefore
$$
\lambda_\infty(c,L,L_0)\le \lambda(c,L).
$$

Now, since {$f(\xi,0)\le a_{2np}(\xi)$}, {  by Remark \ref{monotone-rk}}, we have that $\lambda(c,L)\le \lambda(c,a_{2np})$ for all $n\ge 1$. This implies that
$$
\lambda(c,L)\le \lambda_\infty(c,L,L_0).
$$
The proposition then follows.
\end{proof}

We now prove Theorem \ref{new-aux-main-thm}.

\begin{proof}[Proof of Theorem \ref{new-aux-main-thm}]

(1) { By Proposition \ref{new-aux-prop1}},  $\lambda(c,L)$ is a principal eigenvalue.  Let $\phi(\xi)$ be a corresponding
positive eigenfunction. {Recall that} $\lambda(c,L)>-q$. By  the similar arguments as in the proof of Theorem \ref{tail-thm},
(1) thus follows.

\smallskip

(2) {By Remark \ref{monotone-rk},  $\lambda(c,L)$ is non-decreasing in $L$. Hence
 there is $0\le L^{**}\le \infty$ such that
$\lambda(c,L)>0$ for $L>L^{**}$ and $\lambda(c,L)\le 0$ for $L<L^{**}$.  It suffices to show that there exists an $L>0$ such that $\lambda(c,L)>0$.}

To this end, first, for $0<c<c^*$, take $c^{'}\in (c,c^*)$ and fix it. Consider \eqref{main-fixed-domain-eq} with $r(1-u)u$ being replaced by $r(1-\epsilon-u)u$ for some $0<\epsilon\ll 1$. For given $u_0\in X^+$ with nonempty and compact support ${\rm supp}(u_0)$, by Proposition
\ref{c-star-prop},
$$
{\liminf_{t\to\infty}\inf_{|x|\le c^{'}t} (u_\infty(t,x;u_0)-(1-\epsilon))=0,}
$$
where $u_\infty(t,x;u_0)$ is the solution of \eqref{main-fixed-domain-eq} with $r(1-u)u$ being replaced by $r(1-\epsilon-u)u$ and $u_\infty(0,x;u_0)=u_0(x)$.
 Then we have that
$$
{\liminf_{t\to\infty}\inf_{-(c^{'}+c)t\le x\le (c^{'}-c)t} (u_\infty(t,x+ct;u_0)-(1-\epsilon))=0.}
$$

Next, {it was proved in Theorem \ref{existence-thm} that  $u_\infty(t,x;u_0)$ is  $C^1$ in $x$ if $u_0 \in X^1$,}  and $v_\infty(t,x;u_0):=u_\infty(t,x+ct;u_0)$ is the solution of
\begin{equation}
\label{fixed-domain-eq2}
v_t=cv_x+\int_{\RR}\kappa(y-x)v(t,y)dy-v(t,x)+r(1-\epsilon-v)v,\quad x\in\RR
\end{equation}
with $v_\infty(0,x;u_0)=u_0(x)$ and satisfies
$$
{\liminf_{t\to\infty}\inf_{-(c^{'}+c)t\le x\le (c^{'}-c)t}(v_\infty(t,x;u_0)-(1-\epsilon))=0.}
$$

Now choose $u_0$ such that {$u_0\le (1-3\epsilon)/2$. Then there is $T_0>0$ such that for any $T>T_0$,
\begin{equation}
\label{u0-veq0}
v_\infty(T,x;u_0)>(1-\epsilon)/2,\quad x\in {\rm supp}(u_0).
\end{equation}

Let $\tilde v_\infty(t,x;u_0)$ be the solution of
\begin{equation}
\label{fixed-domain-eq3}
v_t=cv_{x}+\int_{\RR}\kappa(y-x)v(t,y)dy-v(t,x)+r(1-\epsilon) v,\quad x\in\RR.
\end{equation}
Note that $\tilde v_\infty(t,x;u_0)$ is a solution and thus also a super-solution of \eqref{fixed-domain-eq3}, while  $v_{\infty}(t,x;u_0)$ is a sub-solution of \eqref{fixed-domain-eq3} because
\begin{align*}
&\frac{\partial v_{\infty}}{\partial t}-(c\frac{\partial v_{\infty}}{\partial x}+\int_{\RR}\kappa(y-x)v_{\infty}(t,y;u_0)dy-v_{\infty}(t,x)+r(1-\epsilon) v_{\infty})\\
&=\frac{\partial v_{\infty}}{\partial t}-(c\frac{\partial v_{\infty}}{\partial x}+\int_{\RR}\kappa(y-x)v_{\infty}(t,y;u_0)dy-v_{\infty}(t,x;u_0)+r(1-\epsilon-v_\infty) v_{\infty})\\
&\quad \quad\quad\quad\quad+r(1-\epsilon-v_{\infty}) v_{\infty}-r(1-\epsilon) v_{\infty}\\
&=r(1-\epsilon-v_\infty) v_{\infty}-r(1-\epsilon) v_{\infty}\\
&=-rv_\infty^2\\
&\le 0.
\end{align*}
With \eqref{u0-veq0} and $\tilde v_\infty(0,x;u_0)=v_\infty(0,x;u_0)=u_0 $, by the comparison principle (Proposition \ref{comparison-linear-prop}) for \eqref{fixed-domain-eq3}, we have that
\begin{equation}
\label{u0-veq}
\tilde v_\infty(T,x;u_0)\ge v_\infty(T,x;u_0)\ge (1-\epsilon)/2,\quad \forall x\in {\rm supp}(u_0), T>T_0.
\end{equation}}
Let $v(t,x;u_0,L)$ be the solution of \eqref{main-lin-eq1} with $v(0,x;u_0,L)=u_0(x)$. {Replacing $f(\xi,0)$ in \eqref{main-lin-eq1} by $f(\xi,0)-r\epsilon$, we have that
\begin{equation}
\label{main-lin-eq1_0}
\frac{\partial v(t,\xi)}{\partial t}=c \frac{\partial v(t,\xi)}{\partial \xi}+ \int_{\RR} \kappa(\eta-\xi) v(t,\eta)d\eta-v(t,\xi)+(f(\xi,0)-r\epsilon) v(t,\xi),\quad \xi \in\RR.
\end{equation}
Then \eqref{main-lin-eq1_0} has a solution $e^{-r\epsilon t}v(t,x;u_0,L)$.
{ Apply Proposition \ref{basic-convergence}} with replacing \eqref{main-lin-eq1} by \eqref{main-lin-eq1_0} and \eqref{fixed-domain-eq1} by  \eqref{fixed-domain-eq3} and get that}
$$
\lim_{L\to\infty} e^{-r\epsilon t}v(t,x;u_0,L)=\tilde v_\infty(t,x;u_0)
$$
locally uniformly in $(t,x)\in \RR^+\times\RR$. {Then there exists a large enough $L$ such that
$$
e^{-r\epsilon T}v(T,x;u_0,L) \ge  \tilde v_\infty(T,x;u_0)-\epsilon,\quad \forall x\in {\rm supp}(u_0),T>T_0.
$$
 Thus, with \eqref{u0-veq} and $u_0$ chosen such that $u_0 \le (1-3\epsilon)/2$ in the beginning, we have that
$$
v(T,x;u_0,L)\ge e^{r\epsilon T}(\tilde v_\infty(T,x;u_0)-\epsilon)\ge e^{r\epsilon T}((1-\epsilon)/2-\epsilon)=e^{r\epsilon T}(1-3\epsilon)/2 \ge e^{r\epsilon T} u_0(x),
$$for any $x\in {\rm supp}(u_0)$ and $T>T_0$.}
This together with Proposition \ref{new-aux-prop2}  implies that for $L\gg 1$, $\lambda(c,L)\ge r\epsilon>0$.

(3) { By (1),
 $\lambda(c,L)$ is a principal eigenvalue. Hence} there exist a $\phi >0$ such that
\begin{equation}
\label{eigen-eq3}
c\phi'(\xi)+\int_{\RR}\kappa(\xi-\eta)\phi(\eta)d\eta-\phi(\xi)+f(\xi,0)\phi(\xi)=\lambda(c,L)\phi(\xi).
\end{equation}
{Let $\mu^*>0$ and $c^*$ be as in \eqref{Spreading_c}, i.e., $c^*=\frac{\int_{\RR} e^{-\mu^* z}k(z)dz-1 +r}{\mu^*}$. Let $(\lambda^*,\psi)$ be such that $\psi=e^{\mu^* \xi}$ and $\lambda^*=\int_{\RR} e^{-\mu^* z}k(z)dz-1 +r-\mu^* c$.
Then $(\lambda^*,\psi)$ satisfies that
\begin{equation}
\label{eigen-eq4}
-c\psi'(\xi)+\int_{\RR}\kappa(\xi-\eta)\psi(\eta)d\eta-\psi(\xi)+r\psi(\xi)=\lambda^*\psi(\xi),
\end{equation}
because
\begin{align*}
&-c(e^{\mu^* \xi})'+\int_{\RR}\kappa(\xi-\eta)e^{\mu^* \eta}d\eta-e^{\mu^* \xi}+re^{\mu^* \xi}\\
&=-\mu^*c(e^{\mu^* \xi})+\int_{\RR}\kappa(\xi-\eta)e^{\mu^* \eta}d\eta-e^{\mu^* \xi}+re^{\mu^* \xi}\\
&=(-\mu^*c+\int_{\RR}\kappa(\xi-\eta)e^{-\mu^* (\xi-\eta)}d\eta-1+r)e^{\mu^* \xi}\\
&=\lambda^*\psi(\xi).
\end{align*}

Multiply  \eqref{eigen-eq3}  by $\psi$ and \eqref{eigen-eq4} by $\phi$, integrate both sides of the above equations and subtract, then we have that
\begin{align*}
(\lambda(c,L)-\lambda^*)\int_{\RR}\psi\phi d\xi&=\int_{\RR} [c\phi'(\xi)+\int_{\RR}\kappa(\xi-\eta)\phi(\eta)d\eta-\phi(\xi)+f(\xi,0)\phi(\xi)]\psi(\xi)d\xi \\
&-\int_{\RR}[-c\psi'(\xi)+\int_{\RR}\kappa(\xi-\eta)\psi(\eta)d\eta-\psi(\xi)+r\psi(\xi)]\phi(\xi)d\xi\\
&=\int_{\RR} c[\phi'(\xi)\psi(\xi)+ \psi'(\xi)\phi(\xi)]d\xi\\
&+   \int_{\RR}\kappa(\xi-\eta)\phi(\eta)\psi(\xi)d\eta d\xi- \int_{\RR}\kappa(\xi-\eta)\phi(\xi)\psi(\eta)d\eta d\xi \\
&+\int_{\RR} [f(\xi,0)-r]\phi(\xi)\psi(\xi)d\xi.
\end{align*}
In addition, by the results of (1) in this theorem, we have that  $\phi$ is bounded and
$$
\limsup_{\xi\to\infty}\frac{\phi(\xi)}{e^{\mu_-(\lambda(c,L))\xi}}\le \tilde M_+.
$$
Then
$\ds\lim_{\xi \to -\infty}\phi(\xi)\psi(\xi)=\lim_{\xi \to -\infty}\phi(\xi)e^{u^*\xi}=0$  and  with $\mu^*<-\mu_-(\lambda(c,L))$, we have that $$0 \le \ds\lim_{\xi \to \infty}\phi(\xi)\psi(\xi)=\lim_{\xi \to \infty}\phi(\xi)e^{u^*\xi} \le \lim_{\xi \to \infty}\tilde M_+e^{(\mu^*+\mu_-(\lambda(c,L)))\xi}=0.$$   Therefore we have that $\int_{\RR} c[\phi'(\xi)\psi(\xi)+ \psi'(\xi)\phi(\xi)]d\xi=\int_{\RR} c[\phi(\xi)\psi(\xi)]'d\xi=0$.
By Fubini's theorem, $\int_{\RR}\kappa(\xi-\eta)\phi(\eta)\psi(\xi)d\eta d\xi=\int_{\RR}\kappa(\eta-\xi)\phi(\xi)\psi(\eta)d\eta d\xi$. With $k(z)=k(-z)$, we have that
 $\int_{\RR}\kappa(\xi-\eta)\phi(\eta)\psi(\xi)d\eta d\xi- \int_{\RR}\kappa(\xi-\eta)\phi(\xi)\psi(\eta)d\eta d\xi=0$.
Therefore we have that
$$(\lambda(c,L)-\lambda^*)\int_{\RR}\psi\phi d\xi=\int_{\RR} [f(\xi,0)-r]\phi(\xi)\psi(\xi)d\xi \leq 0,$$
and thus $\lambda(c,L) \leq \lambda^*$. Since $\lambda^*=\int_{\RR} e^{-\mu^* z}k(z)dz-1 +r-\mu^* c=\mu^*(c^*-c)$,
$\lambda^*<0$ if $c>c^*$. This implies that $\lambda(c,L) <0$ if $c>c^*$.}
\end{proof}

\subsection{Applications of principal eigenvalue theory}

In this subsection, we discuss the persistence and extinction in \eqref{main-eq} by applying the principal eigenvalue theory
established in the previous subsection. Our main results of this subsection are stated in the following {Theorem \ref{persistence-extinction}. In the statement of Theorem \ref{persistence-extinction}, we use $\mu_{\pm}(\lambda(c,L))$ which were defined in \eqref{mu-plus-minus-eq}. Let $v(t,\xi;v_0)$ be the solution of \eqref{main-eq} with $v(0,\xi;v_0)=v_0(\xi) \in X^+$.}

\begin{theorem}
\label{persistence-extinction}
\begin{itemize}
\item[(1)] (Persistence) If $\lambda(c,L)>0$, then there is a positive stationary solution of \eqref{main-eq}, and for any $K>0$ and
$v_0\in {Int(X^+)}$ satisfying $\ds\liminf_{\xi\to\infty}\frac{v_0(\xi)}{e^{\mu_-(\lambda(c,L))\xi}}>0$ and $\ds\liminf_{\xi\to -\infty}\frac{v_0(\xi)}{e^{\mu_+(\lambda(c,L))\xi}}>0$,
$$
\liminf_{t\to\infty} \inf_{|\xi|\le K} v(t,\xi;v_0)>0.
$$

\item[(2)] (Extinction) If {$\lambda(c,L) \le 0$}, then for any $v_0\in X^+$,
$$
\lim_{t\to\infty} \sup_{\xi\in\RR} v(t,\xi;v_0)=0.
$$
\end{itemize}
\end{theorem}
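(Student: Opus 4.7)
The strategy is to construct a positive stationary sub-solution from the principal eigenfunction $\phi$ supplied by Theorem~\ref{new-aux-main-thm}(1) for $\lambda:=\lambda(c,L)>0$. A direct calculation shows that $\gamma\phi$ is a stationary sub-solution of \eqref{main-eq} for all sufficiently small $\gamma>0$: the defect in \eqref{main-eq} equals $\gamma\phi\bigl[\lambda+(f(\xi,\gamma\phi)-f(\xi,0))\bigr]$, and since $f$ is $C^1$ in $u$ uniformly in $\xi$ with $\lambda>0$, the bracket is nonnegative once $\gamma\|\phi\|_\infty$ is small enough. By Proposition~\ref{comparison-linear-prop}, $v(t,\xi;\gamma\phi)$ is nondecreasing in $t$ and trapped below a constant super-solution $u_M\equiv M$ (any $M\ge 1$); the smoothness/compactness argument from Theorem~\ref{equivalence-thm} then yields convergence to a positive stationary solution $\Phi\ge\gamma\phi$ of \eqref{main-eq}. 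For the pointwise persistence estimate I would combine the tail bounds $\phi(\xi)\le\tilde M_+ e^{\mu_-(\lambda)\xi}$ (as $\xi\to+\infty$) and $\phi(\xi)\le\tilde M_- e^{\mu_+(\lambda)\xi}$ (as $\xi\to-\infty$) from Theorem~\ref{new-aux-main-thm}(1) with the two liminf hypotheses on $v_0$ and the strict positivity of $v_0$ on compact sets, to obtain $v_0\ge\gamma_0\phi$ pointwise on $\RR$ for some $\gamma_0>0$ small enough to retain the sub-solution property. Comparison then gives $v(t,\xi;v_0)\ge \gamma_0\phi(\xi)$ for all $t\ge 0$, and $\inf_{|\xi|\le K}\gamma_0\phi>0$ because $\phi$ is continuous and strictly positive.

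\textbf{Plan for part (2), extinction.} Since (H2) implies $f(\xi,v)\le f(\xi,0)$ for $v\ge 0$, $v(t,\cdot;v_0)$ is a sub-solution of the linear equation \eqref{main-lin-eq1}, so by Proposition~\ref{comparison-linear-prop} we have $0\le v(t,\xi;v_0)\le V(t;c,L)v_0(\xi)$. When $\lambda(c,L)<0$, Proposition~\ref{new-aux-prop2} yields $\|V(t;c,L)\|\le M e^{(\lambda(c,L)+\epsilon)t}$ for any $\epsilon\in(0,-\lambda(c,L))$, which immediately gives the required uniform exponential decay $\sup_\xi v(t,\xi;v_0)\to 0$. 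The delicate case $\lambda(c,L)=0$ I would handle via monotone dynamics plus an adjoint test: pick $M\ge\max\{1,\|v_0\|_\infty\}$ so that $u_M\equiv M$ is a stationary super-solution (here one uses $f(\xi,M)\le 0$ for $M\ge 1$), then $v(t,\xi;u_M)$ is nonincreasing in $t$ and converges pointwise to a nonnegative stationary solution $\Phi$. Rewriting the stationary equation as $\mathcal{L}(c)\Phi=(f(\xi,0)-f(\xi,\Phi))\Phi$ and pairing with a positive adjoint principal eigenfunction $\psi^\ast$ (which exists because the formal adjoint of $\mathcal{L}(c)$ is $\mathcal{L}(-c)$ by the symmetry $k(z)=k(-z)$ and $f(\xi,0)=f(-\xi,0)$, and Theorem~\ref{pev-thm} applies with eigenfunction $\xi\mapsto\phi(-\xi)$), we obtain
\[
0=\int_{\RR}\bigl(f(\xi,0)-f(\xi,\Phi)\bigr)\Phi\,\psi^\ast\,d\xi.
\]
Strict monotonicity of $f$ in $u$ on the favorable patch $|\xi|\le L$ forces $\Phi\equiv 0$ there, and then $k(0)>0$ combined with the stationary integral equation propagates $\Phi\equiv 0$ to all of $\RR$. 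A final Dini-type step on compact intervals, together with the $-q$ reaction on $|\xi|>L+L_0$ and the exponential tail of $k$ in (H1), upgrades the monotone pointwise decay $v(t,\xi;u_M)\downarrow 0$ to the required uniform decay $\sup_\xi v(t,\xi;v_0)\to 0$.

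\textbf{Main obstacle.} The principal difficulty is the boundary case $\lambda(c,L)=0$: the linear majorant $V(t;c,L)v_0$ no longer decays, so extinction must be extracted from the nonlinear dissipation carried by $f(\xi,v)<f(\xi,0)$ on the favorable patch. The adjoint-eigenfunction identity rules out a nontrivial limit $\Phi$, but the passage from pointwise monotone decay on compacts to uniform decay on all of $\RR$ is nontrivial and requires the full strength of the exponential-tail hypothesis on $k$ and the strongly decaying reaction $f\equiv-q$ outside the patch.
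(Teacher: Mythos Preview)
Your proposal matches the paper's proof in all essentials: the sub-solution $\gamma\phi$ and tail-matching for part~(1), the linear majorant $V(t;c,L)v_0$ for $\lambda(c,L)<0$, and the adjoint-pairing argument with $\psi(\xi)=\phi(-\xi)$ for $\lambda(c,L)=0$ (the paper obtains $\psi$ directly by reflecting the eigenfunction from Theorem~\ref{new-aux-main-thm}, using $k(-z)=k(z)$ and $f(-\xi,0)=f(\xi,0)$, rather than invoking the periodic-coefficient Theorem~\ref{pev-thm}). Your flagged obstacle --- upgrading pointwise decay to uniform decay when $\lambda(c,L)=0$ --- is genuine: the paper's own proof concludes only $\limsup_{t\to\infty}v(t,\xi;v_0)\le \lim_{t\to\infty}v(t,\xi;\bar v)=0$ pointwise and does not carry out the uniform step, so on this point you are being more careful than the paper itself.
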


\begin{proof}
(1) First, assume $\lambda(c,L)>0$. Let $\phi(\xi)$ be a corresponding positive eigenfunction {with $\|\phi\|_{\infty}=1$. Let $\underline{v}(\xi)=\alpha \phi(\xi)$ for $\alpha>0$. Then we have that
\begin{align*}
&-(c \underline{v}'(\xi)+ \int_{\RR} \kappa(\eta-\xi) \underline{v}(\eta)d\eta-\underline{v}(\xi)+f(\xi,\underline{v})\underline{v}(\xi))\\
&=-\alpha(c \phi'(\xi)+ \int_{\RR} \kappa(\eta-\xi) \phi(\eta)d\eta-\phi(\xi)+f(\xi,\alpha\phi)\phi(\xi))\\
&=-\alpha(c \phi'(\xi)+ \int_{\RR} \kappa(\eta-\xi) \phi(\eta)d\eta-\phi(\xi)+f(\xi,0)\phi(\xi))+\alpha(f(\xi,0)-f(\xi,\alpha\phi))\phi(\xi)\\
&=-\alpha\phi(\xi)(\lambda(c,L)-(f(\xi,0)-f(\xi,\alpha\phi))).
\end{align*}
Note that there is $\alpha_0>0$ such that $f(\xi,0)-f(\xi,\alpha\phi))<\lambda(c,L)$ for $0<\alpha<\alpha_0$, and thus $$-(c \underline{v}'(\xi)+ \int_{\RR} \kappa(\eta-\xi) \underline{v}(\eta)d\eta-\underline{v}(\xi)+f(\xi,\underline{v})\underline{v}(\xi)) \le 0.$$ This implies that} $\underline{v}$ is a sub-solution of \eqref{main-eq} for $0<\alpha\le \alpha_0$.

Next, {choose $M > \max\{1,\alpha_0\}$. Let  $\bar v(\xi) \equiv M$. Note that $f(x,M)<0$ if $M>1$. Thus
$-(c \bar{v}'(\xi)+ \int_{\RR} \kappa(\eta-\xi) \bar{v}(\eta)d\eta-\bar{v}(\xi)+f(\xi,\bar v)\bar{v}(\xi))=-f(\xi,M)M \ge 0.$
Hence $\bar v(\xi)$ is a super-solution of \eqref{main-eq}. Note that $\bar v >\underline{v}$.} Let $v(t,\xi;\underline{v})$ be the solution of \eqref{main-eq} with $v(0,\xi;\underline{v})=\underline{v}(\xi)$. Then {by the comparison principle (Proposition \ref{comparison-linear-prop})}
$$
\bar{v} \ge v(t_2,\xi;\underline{v})=v(t_1,\xi;v(t_2-t_1,\cdot;\underline{v}))\ge  v(t_1,\xi;\underline{v}),\quad \forall\,\, 0<t_1<t_2,\,\, \xi\in\RR
$$
and
{$$
\bar{v} \geq v(t,\xi;\underline{v})= v(t,\xi;\alpha \phi(\xi))\geq \underline{v}=\alpha \phi(\xi)\quad \forall\,\, t>0, \xi\in\RR,\,\, 0<\alpha\le\alpha_0.
$$}
It then follows that $\Phi(\xi)$ is a positive stationary solution of \eqref{main-eq}, where
$$
\Phi(\xi)=\lim_{t\to\infty} v(t,\xi;\underline{v}).
$$

{Suppose that $v_0\in Int(X^+)$ with $\ds\liminf_{\xi\to\infty}\frac{v_0(\xi)}{e^{\mu_-(\lambda(c,L))\xi}}>0$ and $\ds\liminf_{\xi\to -\infty}\frac{v_0(\xi)}{e^{\mu_+(\lambda(c,L))\xi}}>0$. Recall that  there are $\tilde M_\pm$ such that
$$
\limsup_{\xi\to\infty}\frac{\phi(\xi)}{e^{\mu_-(\lambda(c,L))\xi}}\le \tilde M_+
\quad {\rm and}\quad
\limsup_{\xi\to -\infty}\frac{\phi(\xi)}{e^{\mu_+(\lambda(c,L))\xi}}\le\tilde M_-.
$$
Then there are $C>0$ and $0<\alpha_1\le\alpha_0$ such that for all $0<\alpha \le \alpha_1$ and $|\xi|>C$, $v_0(\xi)\ge \underline{v}=\alpha \phi(\xi)$. Let $\alpha_2=\frac{\min_{\xi \in [-C,C]}\{v_0(\xi)\}}{\max_{\xi \in [-C,C]}\{\phi(\xi)\}}$. Then for $0<\alpha<\alpha_2$, we also have that $v_0(\xi) \ge \alpha \phi(\xi)$,  for $|\xi|\le C$.
Thus, for $0<\alpha<\min\{\alpha_1,\alpha_2\}$, we have that}
$$
v_0(\xi)\ge \underline{v}=\alpha \phi(\xi),\quad \forall\,\, \xi\in\RR.
$$
It then follows  by { comparison principle (Proposition \ref{comparison-linear-prop}) that
$$
v(t,\xi;v_0)\ge v(t,\xi;\underline{v})\geq \alpha \phi(\xi),\quad \forall\,\, t>0, \xi\in\RR.
$$}
Thus (1) follows.

\smallskip

(2)  {We start the proof by making a stronger assumption that $\lambda(c,L)<0$.  By Proposition \ref{new-aux-prop2}, $\|V(t;c,a)\| \le e^{\lambda(c,L) t} \to 0$, as $t \to \infty$. Then for any $v_0\in X^+$,}
$$
\lim_{t\to\infty}\|V(t;c,a)v_0\|=0.
$$
Note that
{ for $t>0$, $$
v(t,\xi;v_0)=V(t;c,a)v_0+\int_{0}^{t}V(t-s;c,a)[f(\xi,v(s,\xi;v_0))-f(\xi,0)]{v(s,\xi;v_0)}ds.
$$ Since $f_u(x,u)\leq 0$ in (H2) and  $v(t,\xi;v_0)\geq 0$, this implies that $$\int_{0}^{t}V(t-s;c,a)[f(\xi,v(s,\xi;v_0))-f(\xi,0)]{v(s,\xi;v_0)}ds\leq 0.$$
Therefore we have that}
$$
0\le v(t,\xi;v_0)\le V(t;c,a)v_0, \quad \forall\,\, t\ge 0,\,\, \xi\in\RR.
$$
{By the squeeze theorem, $\ds\lim_{t \to \infty}v(t,\xi;v_0)=0$.

 Now we assume that $\lambda(c,L)=0$. Let $\phi$ be a positive principal eigenfunction associated with 0, that is,
$$
c \phi'(\xi)+ \int_{\RR} \kappa(\eta-\xi) \phi(\eta)d\eta-\phi(\xi)+f(\xi,0)\phi(\xi)=0.
$$
Let $\psi(\xi)=\phi(-\xi)$ and then $\psi(\xi)$ satisfies that
\begin{equation}
\label{Eigen-adjoint}
-c \psi'(\xi)+ \int_{\RR} \kappa(\eta-\xi) \psi(\eta)d\eta-\psi(\xi)+f(\xi,0)\psi(\xi)=0.
\end{equation}

Choose M large enough such that  $\bar v=M$ is a super-solution of Equation \eqref{main-eq} and $0 \le v_0 \le \bar v=M$. Then by the comparison principle  (Proposition \ref{comparison-linear-prop}), $0 \le v(t,\xi;v_0) \le v(t,\xi;\bar v) \le M$.
Then $\ds\lim_{t \to \infty}v(t,\xi;\bar v)$ exists and let $w(\xi)=\ds\lim_{t \to \infty}v(t,\xi;\bar v)$ that satisfies
\begin{equation}
\label{w-equation}
c w'(\xi)+ \int_{\RR} \kappa(\eta-\xi) w(\eta)d\eta-w(\xi)+f(\xi,w)w(\xi)=0.
\end{equation}

By the similar arguments in the proof of item (3) of Theorem \ref{new-aux-main-thm},  multiply \eqref{w-equation} by $\psi$ and \eqref{Eigen-adjoint}  by $w$, integrate both sides of the above equations and subtract, then we have that
\begin{equation}
\label{faux-equation}
 \int_{\RR} (f(\xi,w)-f(\xi,0))w(\xi)\psi(\xi)d \xi=0,
\end{equation}
Note that $w(\xi) \ge 0$, $\psi>0$ and by (H2), $f(\xi,w)\le f(\xi,0)$. From \eqref{faux-equation}, if $w(\xi)>0$, then we must have $f(\xi,w) = f(\xi,0)$, which causes a contradiction. Therefore, we must have that $w(\xi)=0$ and so $\ds\limsup_{t \to \infty}v(t,\xi;v_0)\le \ds\lim_{t\to\infty} v(t,\xi;\bar v)=0$.
}
\end{proof}

\begin{remark}
\label{persistence-rk} In the case that $\lambda(c,L)>0$, {it remains an open question} whether Theorem \ref{persistence-extinction}(1) holds
for any $v_0\in X^+\setminus\{0\}$.
\end{remark}

\noindent\textbf{Acknowledgement.} This research is supported in part from NSF-DMS-1411853 of Patrick De Leenheer.

\end{document}